\DeclareMathOperator{\Ker}{Ker}
\DeclareMathOperator{\Tr}{Tr}
\DeclareMathOperator{\Ran}{Ran}
\DeclareMathOperator{\dist}{dist}
\DeclareMathOperator{\supp}{supp}
\DeclareMathOperator{\conv}{conv}
\DeclareMathOperator{\essran}{ess\, ran}
\DeclareMathOperator{\meas}{meas}
\DeclareMathOperator{\essinf}{ess\, inf}
\DeclareMathOperator{\esssup}{ess\, sup}
\renewcommand{\Im}{\operatorname{Im}}
\newcommand{\abs}[1]{\lvert#1\rvert}
\newcommand{\Abs}[1]{\left\lvert#1\right\rvert}
\newcommand{\norm}[1]{\lVert#1\rVert}
\newcommand{\jap}[1]{\langle#1\rangle}
\newcommand{\bbC}{{\mathbb C}}
\newcommand{\bbE}{{\mathbf E}}
\newcommand{\bbP}{{\mathbf P}}
\newcommand{\bbR}{{\mathbb R}}
\newcommand{\bbZ}{{\mathbb Z}}
\newcommand{\bbT}{{\mathbb T}}
\newcommand{\bbV}{{\mathbf V}}
\newcommand{\calH}{\mathcal{H}}
\newcommand{\calN}{\mathcal{N}}
\newcommand{\calF}{\mathcal{F}}
\newcommand{\calP}{\mathcal{P}}
\newcommand{\myQ}{{\mathcal Q}}
\numberwithin{equation}{section}
\theoremstyle{plain}
\newtheorem{theorem}{\bf Theorem}[section]
\newtheorem*{theorem*}{Theorem}
\newtheorem{lemma}[theorem]{\bf Lemma}
\newtheorem{proposition}[theorem]{\bf Proposition}
\newtheorem*{proposition*}{\bf Proposition}
\newtheorem{assumption}[theorem]{\bf Assumption}
\theoremstyle{definition}
\theoremstyle{remark}
\newtheorem*{remark*}{\bf Remark}
\newtheorem{remark}[theorem]{\bf Remark}
\newtheorem{example}[theorem]{\bf Example}
\newcommand{\eps}{\varepsilon}
\newcommand{\dd}{\mathrm d}
\newcommand{\ii}{\mathrm i}
\newcommand{\ee}{\mathrm e}
\newcommand{\kappamax}{{\varkappa_{\max}}}
\newcommand*\bigcdot{\mathpalette\bigcdot@{.5}}
\newcommand*\bigcdot@[2]{\mathbin{\vcenter{\hbox{\scalebox{#2}{\,\,$\m@th#1\bullet$\,\,}}}}}
\begin{document}

\title[Sums of projections with random coefficients]{Sums of projections with random coefficients}

\author{Leonid Pastur}
\address{Department of Mathematics, King's College London, Strand, London, WC2R~2LS, United Kingdom}
\address{B. Verkin Institute for Low Temperature Physics and Engineering, 47 Nauky Avenue, Kharkiv, 61103 Ukraine}
\email{leonid.pastur@kcl.ac.uk}

\author{Alexander Pushnitski}
\address{Department of Mathematics, King's College London, Strand, London, WC2R~2LS, United Kingdom}
\email{alexander.pushnitski@kcl.ac.uk}

\subjclass[2020]{47B80}

\keywords{Ergodic operators, Lifshitz tails, Anderson localisation}

\date{25 September 2025}

\begin{abstract}
We study infinite sums 
\[
{\mathcal P}_{\varkappa}=\sum_{n=-\infty}^\infty \varkappa_n \langle\bigcdot, \psi_n\rangle\psi_n
\]
of rank-one projections in a Hilbert space, where $\{\psi_n\}_{n\in\mathbb Z}$ are norm-one vectors, not necessarily orthogonal, and $\{\varkappa_n\}_{n\in\mathbb Z}$ are independent identically distributed positive random variables. Assuming that the Gram matrix $\{\langle\psi_n,\psi_m\rangle\}_{n,m\in\mathbb Z}$ defines a bounded operator on $\ell^2(\mathbb Z)$ and that its entries depend only on the difference $n-m$, we analyse ${\mathcal P}_{\varkappa}$ within the framework of spectral theory of ergodic operators. Inspired by the spectral theory of ergodic Schr\"odinger operators, we define the integrated density of states (IDS) measure $\nu_{\calP_\varkappa}$ for ${\mathcal P}_{\varkappa}$ and establish results on its continuity and absolute continuity, including Wegner-type estimates and Lifshitz tail behaviour near the spectral edges. In the asymptotic regime of nearly-orthogonal $\psi_n$, we prove the Anderson-type localisation result: the spectrum of ${\mathcal P}_{\varkappa}$ is pure point almost surely. 
\end{abstract}

\maketitle

\section{Introduction}
\label{sec.aa}

\subsection{Overview}

Let $\calH$ be a Hilbert space, with the inner product of $f$ and $g$ denoted by $\jap{f,g}$, which we assume linear in $f$ and anti-linear in $g$.  We denote by $\jap{\bigcdot,f}f$ the rank-one operator mapping $g$ to $\jap{g,f}f$. 

Let $\{\psi_n\}_{n\in\bbZ}$ be a sequence of elements in $\calH$ of norm one, \emph{not necessarily orthogonal to one another}. We discuss the spectral analysis of the sum of rank-one projections
\begin{equation}
\boxed{
\calP_\varkappa=\sum_{n=-\infty}^\infty \varkappa_n \jap{\bigcdot,\psi_n}\psi_n}
\label{eq:aa1}
\end{equation}
where $\varkappa=\{\varkappa_n\}_{n\in\bbZ}$ are independent identically distributed (i.i.d.) real random variables, whose common distribution is denoted by $\bbP_0$. 

The objects determining $\calP_\varkappa$, up to unitary equivalence, are the distribution $\bbP_0$ and the Gram matrix of $\{\psi_n\}_{n\in\bbZ}$, i.e. 
\begin{equation}
G=\{\jap{\psi_n,\psi_m}\}_{n,m\in\bbZ}.
\label{eq:aa3}
\end{equation}
Throughout the paper, our standing assumption is 

\begin{assumption}\label{ass:1}
\begin{enumerate}[\rm (a)]
\item
$\{\varkappa_n\}_{n\in\bbZ}$ are bounded and positive, separated away from zero, i.e. $\bbP_0$ is compactly supported on $(0,\infty)$. We denote 
\[
0<\varkappa_{\min}=\inf(\supp\bbP_0), \quad \varkappa_{\max}=\sup(\supp\bbP_0)<\infty. 
\]
\item
The Gram matrix $G$ is bounded as an operator on $\ell^2(\bbZ)$.
\item
The Gram matrix $G$ is \emph{invariant with respect to the shifts of indices,} i.e. 
\begin{equation}
\jap{\psi_{n+1},\psi_{m+1}}
=
\jap{\psi_n,\psi_m}, \quad n,m\in\bbZ.
\label{eq:aa2}
\end{equation}
In other words, $\jap{\psi_n,\psi_m}$ depends only on the difference $n-m$. 
\end{enumerate}
\end{assumption}

In Section~\ref{sec:aa1} we will see that Assumptions~\ref{ass:1}(a) and (b) ensure that for all $\varkappa$, the finite-rank approximations
\begin{equation}
\calP_\varkappa^{(N)}:=\sum_{n=-N}^N \varkappa_n \jap{\bigcdot,\psi_n}\psi_n
\label{eq:PN}
\end{equation}
converge strongly as $N\to\infty$ to the operator $\calP_\varkappa$ that is bounded, self-adjoint and positive semi-definite. This provides the precise definition of  $\calP_\varkappa$. 

In Section~\ref{sec:aa3} we will see that Assumption~\ref{ass:1}(c) ensures that $\calP_\varkappa$ is an \emph{ergodic family of operators} (see Appendix for the definition). This connects our model to the large body of mathematical physics literature on discrete Schr\"odinger operators with  random i.i.d. potential (referred to as the Anderson model below) and related spectral theory of self-adjoint ergodic operators, see e.g. \cite{Pa-Fi:92}. To a large extent, we are motivated by this area and most of our methods are borrowed from this literature. However, the model we suggest has more ``pure mathematics'' than ``mathematical physics'' flavour. Because of this, we believe it has interesting new features and offers new insights.

We work under Assumption~\ref{ass:1} for the entirety of the paper. We will also make some further technical assumptions as we go along. In Section~\ref{sec:aa3} (see also Appendix) we will explain that as a consequence of ergodicity, by well-known results \cite{Pa-Fi:92} the spectrum of $\calP_\varkappa$ is deterministic (i.e. is independent of $\varkappa$ for a.e. $\varkappa$). In brief, our main results are as follows:
\begin{itemize}
\item
The location of the deterministic spectrum of $\calP_\varkappa$. 
\item
Existence of the integrated density of states (IDS) measure $\nu_{\calP_\varkappa}$ for $\calP_\varkappa$ (the limit of the normalised eigenvalue counting measures of $\calP^{(N)}_\varkappa$ of \eqref{eq:PN}) and Wegner-type estimates for the density of $\nu_{\calP_\varkappa}$. 
\item
Lifshitz tails bounds for $\nu_{\calP_\varkappa}$ ($\nu_{\calP_\varkappa}$ is ``exponentially thin'') near the top and bottom edges of the spectrum. 
\item
Pure point spectrum (Anderson type localisation) in the asymptotic regime of nearly-orthogonal vectors $\{\psi_n\}_{n\in\bbZ}$ in \eqref{eq:aa1}. 
\end{itemize}

\subsection{Discussion}
\emph{Unitary invariance}: the Gram matrix $G$ completely determines the sequence $\{\psi_n\}_{n\in\bbZ}$ up to unitary equivalence. For further references, we display this statement:
\begin{remark}\label{rmk:a1}
If $\{\psi_n\}_{n\in\bbZ}$ and $\{\widetilde{\psi}_n\}_{n\in\bbZ}$ are two sequences with the same Gram matrix, then there exists a unitary operator $U$ on $\calH$ such that $\widetilde{\psi}_n=U\psi_n$ for all $n$.
\end{remark}
Since we are interested in unitary invariant properties of $\calP_\varkappa$, the choice of the Hilbert space $\calH$ and of the sequence $\{\psi_n\}_{n\in\bbZ}$ are immaterial to us, as long as the Gram matrix is fixed. However, in Section~\ref{sec:bbb} we will discuss a concrete \emph{realisation} of $\calP_\varkappa$ in $\calH=\ell^2(\bbZ)$ which will be very useful. 

\emph{Kernels:} it is of little importance for the spectral analysis of $\calP_\varkappa$ whether the sequence $\{\psi_n\}_{n\in\bbZ}$ is complete in $\calH$ (i.e. whether the span of $\{\psi_n\}_{n\in\bbZ}$ is dense). If it is not complete, we can always focus on the subspace where it is complete, and the orthogonal complement to this subspace will be in the kernel of $\calP_\varkappa$ for all $\varkappa$. 
We will come back to the discussion of kernels in Section~\ref{sec.b5}.

\emph{The positivity of $\varkappa_n$, $n\in\bbZ$} is a strong assumption that we heavily use. It is possible to extend some aspects of our analysis to $\varkappa$ of variable sign, at the expense of additional assumptions on the Gram matrix $G$. However, some statements completely fail for $\varkappa$ of variable sign; our general impression is that this is a much more complex model. 

\emph{Related literature:}
We believe the model $\calP_\varkappa$ has inherent interest. However, we are also motivated by its connection with ergodic Hankel operators, which will be discussed in \cite{PasPush2}, as well as by the discrete Schr\"odinger operator with ergodic potential, i.i.d. random potential in particular. To make the latter motivation more explicit, write $\varkappa_n=\varkappa_{\min} + \beta_n, \; n \in \mathbb{Z}$, and insert this in \eqref{eq:aa1} to obtain
\[
\calP_\varkappa=\varkappa_{\min}\calP_1+\sum_{n=-\infty}^\infty \beta_n \jap{\bigcdot,\psi_n}\psi_n.
\]
Here the first term in the r.h.s is non-random ($\calP_1$ corresponds to the sum of projections with all coefficients $=1$) and should be viewed as the unperturbed operator, while the second term can be viewed as a non-local analogue of a random potential. In the context of Schr\"odinger operators such non-local models were considered in \cite{Fi-Pa:90, HKK}. It is also worth mentioning the sample covariance matrices of statistics and random matrix theory \cite[Chapters 7 and 19]{Pa-Sch:11}, because these matrices can also be written as sums of random projections. Related classes of large random matrices can be found in \cite{Pa:23,Pa-Va:00}.

\subsection{The structure of the paper}
In Section~\ref{sec.bb} we explain the definition of basic objects in more detail and state the main results. In Section~\ref{sec:bbb} we discuss a \emph{realisation} of $\calP_\varkappa$ of \eqref{eq:aa1} in $\ell^2(\bbZ)$; in practice, we will work with this realisation. Proofs of the main results are given in Sections~\ref{sec.cc}--\ref{sec.f}. In Appendix, we collect standard background on spectral theory of ergodic operators.

\subsection{Notation}
The sequence $\varkappa=\{\varkappa_n\}_{n\in\bbZ}$ will be viewed as a random variable on the probability space $\Omega=\bbR^\bbZ$ with respect to the probability measure $\bbP$ that is the product of countably many copies of $\bbP_0$:
\begin{equation}
\bbP=\prod_{n\in\bbZ}\bbP_0.
\label{eq:P0}
\end{equation}
\emph{Almost surely} (a.s.) refers to the measure $\bbP$. We will denote by $\bbE$ the expectation with respect to the measure $\bbP$.

For a set $\Lambda\subset\bbR$, we denote by $\chi_\Lambda$ the characteristic function of $\Lambda$. If $A$ is a self-adjoint operator, then $\chi_\Lambda(A)$ is the spectral projection of $A$ corresponding to the set $\Lambda$ and $\sigma(A)$ is the spectrum of $A$. If $A$ is a Hermitian matrix, we denote by 
\begin{equation}
\calN(\lambda;A)=\Tr\chi_{(\lambda,\infty)}(A)
\label{eq:CF}
\end{equation}
the eigenvalue counting function of $A$. (Notice that this is a slightly non-standard convention, as our $\calN(\lambda;A)$ is decreasing in $\lambda$.)

For a real-valued function $\varphi\in L^\infty(\bbT)$, where $\bbT$ is the unit circle, we denote 
\[
\essran\varphi=\{a\in\bbR: \meas\{x: \abs{\varphi(x)-a}<\eps\}>0,\   \forall \eps>0\},
\]
where $\meas$ is the Lebesgue measure on $\bbT$, and 
\[
\essinf\varphi=\inf(\essran\varphi), \quad \esssup\varphi=\sup(\essran\varphi).
\]
We denote by $\delta_n$, $n\in\bbZ$, elements of the canonical basis in $\ell^2(\bbZ)$, i.e. $\delta_n\in\ell^2(\bbZ)$ has $1$ on the $n$-th position and $0$ elsewhere.  The orthogonal projection in $\ell^2(\bbZ)$ onto the subspace spanned by $\delta_n$, $-N\leq n\leq N$ is
denoted by $1_N$. 

If $X$ and $Y$ are subsets of $\bbR$, we will write
\begin{equation}
X+Y=\{x+y: x\in X, \ y\in Y\} 
\quad\text{ and }\quad
X\cdot Y=\{xy: x\in X, \ y\in Y\}.
\label{eq:sumset}
\end{equation}

\section{Main results}
\label{sec.bb}

\subsection{Boundedness of $\calP_\varkappa$}\label{sec:aa1}
To start the discussion, we consider the case when all variables $\varkappa_n=1$, $n\in\bbZ$, and define
\[
\calP_1^{(N)}:=\sum_{n=-N}^N \jap{\bigcdot,\psi_n}\psi_n.
\]
Let us address the basic question: does $\calP_1^{(N)}$ converge strongly to a bounded operator as $N\to\infty$? Let us define an auxiliary operator $A_N:\calH\to\bbC^{2N+1}$ by 
\[
A_N: x\mapsto \{\jap{x,\psi_n}\}_{\abs{n}\leq N}.
\]
Then the adjoint $A_N^*:\bbC^{2N+1}\to\calH$ is 
\[
A_N^*: \{x_n\}_{\abs{n}\leq N}\mapsto \sum_{n=-N}^N x_n \psi_n
\]
and we have 
\[
\calP_1^{(N)}=A_N^*A_N 
\quad\text{ and }\quad
A_NA_N^*=G^{(N)}=\{\jap{\psi_n,\psi_m}\}_{n,m=-N}^N 
\]
where the $(2N+1)\times(2N+1)$ matrix $G^{(N)}$ is a truncation of the Gram matrix \eqref{eq:aa3}. It follows then from Assumption~\ref{ass:1}(b) (the boundedness of $G$) that the norms of $A_N$ are uniformly bounded. Explicitly, 
\[
\norm{A_N x}^2=\sum_{n=-N}^N \abs{\jap{x,\psi_n}}^2\leq C\norm{x}^2,
\]
and so the series $\sum_n \abs{\jap{x,\psi_n}}^2$ converges. 
This implies that as $N\to\infty$, the operators $A_N$ converge strongly to a bounded operator $A:\calH\to\ell^2(\bbZ)$, 
\[
A: x\mapsto \{\jap{x,\psi_n}\}_{n\in\bbZ},
\]
and 
$\calP_1^{(N)}$ converges strongly to a bounded self-adjoint  operator $\calP_1$, with 
\begin{equation}
\calP_1=A^*A\quad\text{ and }\quad G=AA^*.
\label{eq:aa5}
\end{equation}
Coming back to the finite-rank operators $\calP_{\varkappa}^{(N)}$ of \eqref{eq:PN}, from here we see that by the boundedness of $\varkappa_n$ (Assumption~\ref{ass:1}(a)), the norms of $\calP_{\varkappa}^{(N)}$ are uniformly bounded and $\calP_{\varkappa}^{(N)}$ converge strongly to a bounded self-adjoint positive semi-definite operator $\calP_\varkappa$ as $N\to\infty$.

\subsection{Ergodicity of $\calP_\varkappa$}\label{sec:aa3}
The shift
\[
T:\{\varkappa_n\}\mapsto \{\varkappa_{n+1}\}
\]
is an ergodic map on $\Omega=\bbR^{\bbZ}$ (i.e. it is measure preserving and if a set is invariant under $T$, then this set has $\bbP$-measure $0$ or $1$). Trivially, we have
\begin{equation}
\calP_{T\varkappa}=
\sum_{n=-\infty}^\infty \varkappa_{n+1} \jap{\bigcdot,\psi_{n}}\psi_{n}=
\sum_{n=-\infty}^\infty \varkappa_n \jap{\bigcdot,\psi_{n-1}}\psi_{n-1}.
\label{eq:aa6}
\end{equation}
The shift invariance \eqref{eq:aa2} of $G$ means that the shifted sequence $\{\psi_{n-1}\}_{n\in\bbZ}$ has the same Gram matrix as the original sequence $\{\psi_{n}\}_{n\in\bbZ}$, and therefore  (see Remark~\ref{rmk:a1})
\[
\psi_{n-1}=U\psi_n, \quad n\in\bbZ
\]
for some unitary operator $U$. Coming back to \eqref{eq:aa6}, we find that 
\[
\calP_{T\varkappa}=U\calP_\varkappa U^*.
\]
This means that  $\calP_\varkappa$ is an \emph{ergodic operator}. We have collected some minimal background on ergodic operators in the Appendix. 

By standard results in the theory (see Proposition~\ref{prp:A1}), \emph{the spectrum of $\calP_\varkappa$ is non-random}, and the same applies to the components of the spectrum (continuous, absolutely continuous, singular continuous and pure point). 

\subsection{The symbol of $\calP_\varkappa$}\label{sec:aa3a}
Here we introduce the symbol of $\calP_\varkappa$, which plays an important role in our analysis. We recall that the Gram matrix $G$ is bounded on $\ell^2(\bbZ)$ (Assumption~\ref{ass:1}(b)). This implies that any row or column of $G$ is square summable, e.g. 
\begin{equation}
\sum_{n=-\infty}^\infty \abs{\jap{\psi_n,\psi_0}}^2<\infty.
\label{eq:aa7}
\end{equation}
Now let us define the function $\varphi$ on the unit circle by 
\begin{equation}
\varphi(k)=\sum_{n=-\infty}^\infty \jap{\psi_n,\psi_0}\ee^{\ii n k}, \quad -\pi\leq k\leq \pi.
\label{eq:sym}
\end{equation}
We will call $\varphi$ the \emph{symbol} of $\calP_\varkappa$ of \eqref{eq:aa1}. 
By \eqref{eq:aa7}, the symbol $\varphi$ is in $L^2(\bbT)$. Let us denote for brevity by 
\begin{equation}
\widehat{\varphi}_n=\jap{\psi_n,\psi_0}
\label{eq:fphi}
\end{equation}
the Fourier coefficients of $\varphi$. By the shift invariance \eqref{eq:aa2}, we have 
\[
\widehat{\varphi}_{-n}=\jap{\psi_{-n},\psi_0}=\jap{\psi_0,\psi_n}=\overline{\widehat{\varphi}_n},
\]
and so $\varphi$ is real-valued. Furthermore, we can write
\begin{equation}
\jap{\psi_n,\psi_m}=\jap{\psi_{n-m},\psi_0}=\widehat{\varphi}_{n-m}=G_{n-m},
\label{eq:ppg}
\end{equation}
i.e. the Gram matrix $G$ coincides with the matrix of convolution with the sequence $\widehat{\varphi}_n$. Such convolution matrices are known as \emph{Laurent operators} (or two-sided Toeplitz operators) with the symbol $\varphi$, and we will therefore adopt the notation 
\begin{equation}
G=L_\varphi=\{\widehat{\varphi}_{n-m}\}_{n,m\in\bbZ}. 
\label{eq:aa31}
\end{equation}
Trivially, by the unitary map between $L^2(\bbT)$ and $\ell^2(\bbZ)$ effected by the Fourier series expansion, the Laurent operator $L_\varphi$ is unitarily equivalent to the operator of multiplication by the symbol $\varphi$ in $L^2(\bbT)$. In particular,  the spectrum of $L_\varphi=G$ coincides with the essential range of $\varphi$:
\[
\sigma(L_\varphi)=\essran\varphi.
\]
Since the Gram matrix is always positive semi-definite, we find that the symbol $\varphi$ is automatically non-negative a.e. on $\bbT$. Moreover, the boundedness of $G$ on $\ell^2(\bbZ)$ implies that the symbol $\varphi$ is bounded on $\bbT$. Thus, we finally obtain
\begin{equation}
0\leq \varphi\in L^\infty(\bbT).
\label{eq:aa9}
\end{equation}
The normalisation $\norm{\psi_n}=1$ for all $n$ corresponds to the condition $\widehat{\varphi}_0=1$.
We will denote
\[
\varphi_{\min}=\essinf\varphi, \quad \varphi_{\max}=\esssup\varphi.
\]

From the foregoing discussion it is clear that $G$ uniquely determines $\varphi$ and vice versa. In what follows, \emph{we will regard $\varphi$ as the main functional parameter}. Thus, our random family of operators $\calP_\varkappa$ in \eqref{eq:aa1} is parameterised by the distribution $\bbP_0$ of \eqref{eq:P0} and the symbol $\varphi$ of \eqref{eq:sym}.

Finally, as we will explain in Section~\ref{sec:bbb}, for any symbol $\varphi$ satisfying \eqref{eq:aa9} there is a set of vectors $\{\psi_n\}$ in $\ell^2(\bbZ)$ with the Gram matrix $G=\{\widehat{\varphi}_{n-m}\}_{n,m\in\bbZ}$. In this sense, the symbol $\varphi$ is a \emph{free parameter} of our model.

\subsection{The kernel of $\calP_\varkappa$}\label{sec.b5}
There are three reasons why $\calP_\varkappa$ may have a non-trivial kernel:
\begin{enumerate}[\rm (i)]
\item
$\{\psi_n\}_{n\in\bbZ}$ are not complete in the Hilbert space;
\item
$\varkappa_n$ take value $0$ (this is ruled out by our Assumption~\ref{ass:1}(a));
\item
$G$ has a non-trivial kernel.
\end{enumerate}
If we assume that $\psi_n$ are complete (or restrict $\calP_\varkappa$ to their closed linear span), the only possible reason is (iii). We would like to point out that this reason cannot be ruled out. Indeed, one can have a symbol $\varphi$ that takes value $0$ on a set of positive measure. Then $G$ has an infinite-dimensional kernel and in this case  $\calP_\varkappa$ will have an infinite-dimensional kernel for \emph{all} $\varkappa$. 

\subsection{Location of the spectrum of $\calP_\varkappa$}
For $\Delta\subset\bbR$, we denote by $\conv(\Delta)$ the closed convex hull of $\Delta$ (i.e. the minimal closed interval containing $\Delta$). We also use notation \eqref{eq:sumset} for product sets.

\begin{theorem}\label{thm:aa2}
\begin{enumerate}[\rm (i)]
\item
The deterministic spectrum $\sigma(\calP_\varkappa)$ of $\calP_\varkappa$ satisfies
 \begin{equation}
 \supp \bbP_0\cdot\essran\varphi\subset\sigma(\calP_\varkappa)\subset \supp\bbP_0\cdot\conv(\essran\varphi). 
\label{eq:aa10}
\end{equation}
\item
The spectrum $\sigma(\calP_\varkappa)$ is a subset of the interval
$[\sigma_{\min},\sigma_{\max}]$, where
\[
\sigma_{\min}=\varkappa_{\min}\varphi_{\min}, \quad
\sigma_{\max}=\varkappa_{\max}\varphi_{\max}, 
\]
and 
\[
\sigma_{\min}=\min\sigma(\calP_\varkappa), 
\quad 
\sigma_{\max}=\max\sigma(\calP_\varkappa).
\]
In particular, if $\essran\varphi$ is an interval, we have 
\[
\sigma(\calP_\varkappa)=\supp \bbP_0\cdot\essran\varphi.
\]
\end{enumerate}
\end{theorem}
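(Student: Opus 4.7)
The plan is to prove Part (i) first and to deduce Part (ii) as a corollary from the operator sandwich $\varkappa_{\min}\calP_1 \leq \calP_\varkappa \leq \varkappa_{\max}\calP_1$. Throughout I would work in the realisation promised in Section~\ref{sec:bbb}, where $\calP_\varkappa$ factors as $\calP_\varkappa = L_{\sqrt{\varphi}} M_\varkappa L_{\sqrt{\varphi}}$, with $M_\varkappa$ the diagonal multiplication by $\{\varkappa_n\}$ on $\ell^2(\bbZ)$ and $L_{\sqrt{\varphi}}$ the Laurent operator with symbol $\sqrt{\varphi}$, so that $L_{\sqrt{\varphi}}^2=L_\varphi=G$. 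By the ergodicity established in Section~\ref{sec:aa3}, the spectrum is a.s.\ deterministic, and it suffices to verify the two inclusions of Part (i) on a set of full $\bbP$-measure.

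For the lower inclusion $\supp\bbP_0 \cdot \essran\varphi \subset \sigma(\calP_\varkappa)$ I would fix $\lambda_0 \in \supp\bbP_0$ and $\mu_0 \in \essran\varphi$ (the case $\mu_0=0$ is immediate since then $\varphi_{\min}=0$ and $\calP_\varkappa$ has infinite-dimensional kernel, see Section~\ref{sec.b5}) and construct a Weyl sequence at $\lambda_0 \mu_0$. Since $\mu_0 \in \essran\varphi$, the set $E_\eps = \{k \in \bbT : \abs{\varphi(k) - \mu_0} < \eps\}$ has positive Lebesgue measure, so $f_\eps := \chi_{E_\eps}/\sqrt{\meas E_\eps}$ satisfies $\norm{(\varphi - \mu_0) f_\eps}_{L^2(\bbT)} < \eps$. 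Let $u_\eps \in \ell^2(\bbZ)$ be its inverse Fourier transform and $u_\eps^{(N)} := 1_N u_\eps$ its truncation. Because $\lambda_0 \in \supp\bbP_0$ and the $\varkappa_n$ are i.i.d., a.s.\ there exists a random $n_0$ with $\abs{\varkappa_{n_0+n} - \lambda_0} < \eps$ for all $\abs{n} \leq N$ (the event has positive probability at each $n_0$, so occurs a.s.\ by independence over disjoint windows). Translating $u_\eps^{(N)}$ by $n_0$ to obtain a vector $w$, I would use
\[
\norm{\calP_\varkappa w - \lambda_0 \mu_0 w} \leq \norm{L_{\sqrt{\varphi}}(M_\varkappa - \lambda_0) L_{\sqrt{\varphi}} w} + \lambda_0 \norm{L_\varphi w - \mu_0 w}.
\]
The second term is small by construction; for the first, the square-summability of $\widehat{\varphi}_n$ ensures $L_{\sqrt{\varphi}} w$ is essentially concentrated on a moderately enlarged window around $n_0$, on which $\abs{M_\varkappa - \lambda_0} < \eps$, while the tail outside is controlled by the decay of the Fourier coefficients of $\sqrt\varphi$ and made arbitrarily small by choosing $N$ large before $\eps$.

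The upper inclusion $\sigma(\calP_\varkappa) \subset \supp\bbP_0 \cdot \conv(\essran\varphi)$ is the crux. Assuming first $\varphi_{\min} > 0$, so $L_{\sqrt{\varphi}}$ is boundedly invertible, one has the similarity
\[
\calP_\varkappa - \lambda = L_{\sqrt{\varphi}}\bigl(M_\varkappa - \lambda L_{1/\varphi}\bigr) L_{\sqrt{\varphi}},
\]
so that $\lambda \in \sigma(\calP_\varkappa)$ iff $0 \in \sigma(H_\lambda)$ where $H_\lambda := M_\varkappa - \lambda L_{1/\varphi}$. The operator $H_\lambda$ is ergodic of the classical Anderson form (diagonal i.i.d.\ plus shift-invariant Laurent perturbation), for which the spectral-theoretic results of \cite{Pa-Fi:92} yield the a.s.\ identity
\[
\sigma(H_\lambda) = \supp\bbP_0 - \lambda \cdot \essran(1/\varphi) = \supp\bbP_0 - \lambda \cdot [1/\varphi_{\max}, 1/\varphi_{\min}].
\]
A direct calculation then gives $0 \in \sigma(H_\lambda)$ iff $\lambda \in \supp\bbP_0 \cdot [\varphi_{\min}, \varphi_{\max}] = \supp\bbP_0 \cdot \conv(\essran\varphi)$. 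The degenerate case $\varphi_{\min} = 0$ would be handled by approximation, replacing $\varphi$ by $\varphi + \delta$ and letting $\delta \to 0$, using upper semicontinuity of the spectrum under monotone strong convergence.

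Part (ii) then drops out: the sandwich $\varkappa_{\min}\calP_1 \leq \calP_\varkappa \leq \varkappa_{\max}\calP_1$ together with $\sigma(\calP_1) \setminus \{0\} = \sigma(G) \setminus \{0\} = \essran\varphi \setminus \{0\}$ from~\eqref{eq:aa5} gives $\sigma(\calP_\varkappa) \subset [\varkappa_{\min}\varphi_{\min}, \varkappa_{\max}\varphi_{\max}]$; Part (i) supplies the matching endpoints since $\varkappa_{\min}\varphi_{\min}, \varkappa_{\max}\varphi_{\max} \in \supp\bbP_0 \cdot \essran\varphi$; and when $\essran\varphi$ is an interval it equals its convex hull, collapsing the two inclusions of Part (i) to the stated equality. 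The main obstacle, as indicated, lies in the upper inclusion of Part (i): one must carefully invoke (or reprove) the ergodic-operator spectral identity for $H_\lambda$ with a general Laurent term rather than the textbook free-Laplacian case, and then dispose of the degenerate endpoint $\varphi_{\min} = 0$ via a limiting argument.
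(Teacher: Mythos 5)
Your proof of the lower inclusion is essentially the paper's own argument: a Weyl sequence obtained by truncating an approximate eigenvector of the Laurent part and translating it into an a.s.\ existing window where $\varkappa$ is close to the chosen point of $\supp\bbP_0$. (The paper works with approximate eigenvectors of $L_\theta$, $\theta=\sqrt\varphi$, rather than of $L_\varphi$, which avoids your enlarged-window bookkeeping; also your side remark for $\mu_0=0$ is off --- $0\in\essran\varphi$ does not force $\varphi$ to vanish on a set of positive measure, so there need not be a kernel --- but the conclusion $0\in\sigma(\calP_\varkappa)$ follows at once from $0\le\calP_\varkappa\le\varkappa_{\max}L_\varphi$, so this is cosmetic.) Part (ii) as you deduce it is fine.

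The genuine gap is in the upper inclusion, which you yourself identify as the crux. Your similarity $\calP_\varkappa-\lambda=L_\theta\bigl(M_\varkappa-\lambda L_{1/\varphi}\bigr)L_\theta$ for $\varphi_{\min}>0$ is correct, but you then rest everything on the a.s.\ identity $\sigma(H_\lambda)=\supp\bbP_0-\lambda\cdot\essran(1/\varphi)=\supp\bbP_0-\lambda\cdot[1/\varphi_{\max},1/\varphi_{\min}]$ attributed to \cite{Pa-Fi:92}. No such identity is available for a general Laurent term: the textbook formula $\sigma(V_\omega+H_0)=\supp\bbP_0+\sigma(H_0)$ is proved for the free Laplacian, whose spectrum is an interval, and for general translation-invariant $H_0$ one only has $\supp\bbP_0+\sigma(H_0)\subset\sigma(V_\omega+H_0)\subset\supp\bbP_0+\conv(\sigma(H_0))$ --- exactly the subtlety the paper's Remark after the theorem is about. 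Moreover your displayed equality conflates $\essran(1/\varphi)$ with the interval $[1/\varphi_{\max},1/\varphi_{\min}]$, i.e.\ with its convex hull; taken literally, your identity would give $\sigma(\calP_\varkappa)\subset\supp\bbP_0\cdot\essran\varphi$, which is stronger than the theorem and false in general. So the decisive step is assumed, and assumed in a false form. The route can be repaired: the direction you actually need, $0\in\sigma(H_\lambda)\Rightarrow\lambda\in\supp\bbP_0\cdot[\varphi_{\min},\varphi_{\max}]$, follows from the additive bound $\sigma(A+B)\subset\conv(\sigma(A))+\sigma(B)$ (Proposition~\ref{prp:cc1} plus a shift by a multiple of the identity) applied with $A=-\lambda L_{1/\varphi}$, $B=M_\varkappa$, and your $\varphi\mapsto\varphi+\delta$ approximation then removes the assumption $\varphi_{\min}>0$. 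Carried out this way, your similarity trick would indeed be a genuinely different and more elementary alternative to the paper's multiplicative Theorem~\ref{thm:add2} (which uses operator monotonicity of $\log$); but that argument has to be supplied --- citing the Anderson-model spectral identity does not do it.
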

If $\varphi$ is continuous, then $\essran\varphi$ is an interval, and so the last statement of the theorem applies. 
\begin{remark}
There is a subtlety in the second inclusion in \eqref{eq:aa10} that we feel should be addressed here (for the details see Section~\ref{sec.cc}). If $A$ and $B$ are bounded self-adjoint operators, it is not true in general that $\sigma(A+B)\subset\sigma(A)+\sigma(B)$, but it is true that 
\[
\sigma(A+B)\subset\conv(\sigma(A))+\sigma(B)
\quad\text{and}\quad 
\sigma(A+B)\subset\conv(\sigma(B))+\sigma(A).
\] 
Similarly, if $A$ and $B$ are bounded \emph{positive definite} operators, one can consider their symmetrised product $\sqrt{A}B\sqrt{A}$. It is not true that $\sigma(\sqrt{A}B\sqrt{A})\subset\sigma(A)\cdot\sigma(B)$, but it is true that 
\[
\sigma(\sqrt{A}B\sqrt{A})\subset\conv(\sigma(A))\cdot\sigma(B)
\quad\text{and}\quad 
\sigma(\sqrt{A}B\sqrt{A})\subset\conv(\sigma(B))\cdot\sigma(A).
\] 
Of course, this subtlety is invisible if the spectrum of one of the two operators is an interval, which is precisely the situation in most mathematical physics applications.
\end{remark}

The proof of Theorem~\ref{thm:aa2} is given in Section~\ref{sec.cc}. Once we pass to the realisation of $\calP_\varkappa$ discussed in Section~\ref{sec:bbb}, the proof becomes fairly standard (for the mathematical physics literature) and uses some ideas from \cite{Ki-Ma:82} and \cite{Ku-So:80}.

\subsection{The integrated density of states measure}
For every $\varkappa$ and every $N$, let us denote the non-zero eigenvalues of the finite-rank approximation $\calP_\varkappa^{(N)}$ by $\lambda_k(\calP_\varkappa^{(N)})$, counting multiplicity, and consider the measure
\[
\dd\nu^{(N)}_{\calP_\varkappa}(\lambda)=\frac1{2N+1}\biggl(\sum_k \delta\bigl(\lambda-\lambda_k(\calP_\varkappa^{(N)})\bigr)\biggr)\dd\lambda,
\]
(here $\delta(\bigcdot)$ is the delta-function) 
having the point mass of $1/(2N+1)$ at each of the eigenvalues. 

We prove that the measures
$\nu^{(N)}_{\calP_\varkappa}$ converge weakly to a deterministic ($\varkappa$-independent) probability measure $\nu_{\calP_\varkappa}$ almost surely. To give the cleanest statement, below we consider the operator $\calP_\varkappa^{(N)}$ restricted onto its $(2N+1)$-dimensional range, which coincides with the linear span of $\{\psi_n\}_{n=-N}^N$. 

\begin{theorem}\label{thm:aa3}
There exists a non-random probability measure $\nu_{\calP_\varkappa}$ with $\supp \nu_{\calP_\varkappa}$ equal to the deterministic spectrum of $\calP_\varkappa$, and a set $\Omega_*\subset\Omega$ of full measure such that for any $\varkappa\in\Omega_*$ and any continuous function $f$ we have 
\begin{align}
&\lim_{N\to\infty}\frac1{2N+1}\Tr f(\calP_\varkappa^{(N)}|_{\Ran \calP_\varkappa^{(N)}}) \notag
\\&\hspace{2cm}=\lim_{N\to\infty} \int_0^\infty f(\lambda) \dd \nu_{\calP_\varkappa}^{(N)} (\lambda) 
=\int_{0}^\infty f(\lambda)\dd\nu_{\calP_\varkappa}(\lambda). 
\label{eq:cc2}
\end{align}
The first and second moments of $\nu_{\calP_\varkappa}$ are
\begin{align}
\int_0^\infty \lambda\dd\nu_{\calP_\varkappa}(\lambda)
&=
\bbE\{\varkappa_0\}, 
\label{eq:cc2a}
\\
\int_0^\infty \lambda^2\dd\nu_{\calP_\varkappa}(\lambda)
&=\bbV\{\varkappa_0\}+(\bbE\{\varkappa_0\})^2\sum_{m=-\infty}^\infty \abs{\widehat{\varphi}_m}^2,
\label{eq:cc3}
\end{align}
where $\bbV\{\varkappa_0\}=\bbE\{\varkappa_0^2\}-(\bbE\{\varkappa_0\})^2$ is the variance of $\varkappa_0$ and $\widehat{\varphi}_m$ are the Fourier coefficients of the symbol $\varphi$, defined in \eqref{eq:fphi}. Recall that $\abs{\widehat{\varphi}_m}^2=G_m$, see \eqref{eq:fphi}. 
\end{theorem}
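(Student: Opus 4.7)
The plan is to define the candidate IDS measure directly as the averaged spectral measure of $\calP_\varkappa$ at $\delta_0\in\ell^2(\bbZ)$, using the realisation of $\calP_\varkappa$ that will be described in Section~\ref{sec:bbb}. Concretely, for continuous $f$ on $[\sigma_{\min},\sigma_{\max}]$ one sets
\begin{equation*}
\int_0^\infty f(\lambda)\,\dd\nu_{\calP_\varkappa}(\lambda):=\bbE\bigl\{\jap{f(\calP_\varkappa)\delta_0,\delta_0}\bigr\}.
\end{equation*}
Taking $f\equiv 1$ shows this is a probability measure, and non-negativity of $\calP_\varkappa$ places its support in $[0,\infty)$. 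Identification of $\supp\nu_{\calP_\varkappa}$ with the deterministic spectrum $\sigma(\calP_\varkappa)$ from Theorem~\ref{thm:aa2} is then a standard consequence of ergodicity (as reviewed in the Appendix), since for any open $I\subset\bbR$ one has $\nu_{\calP_\varkappa}(I)>0$ iff $\chi_I(\calP_\varkappa)\not\equiv 0$ almost surely.

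The weak convergence in \eqref{eq:cc2} would be proven in two steps. First, the covariance $\calP_{T\varkappa}=U\calP_\varkappa U^*$ from Section~\ref{sec:aa3}, with $U$ the bilateral shift satisfying $U\delta_n=\delta_{n-1}$, makes $n\mapsto \jap{f(\calP_\varkappa)\delta_n,\delta_n}$ a bounded stationary sequence, and Birkhoff's pointwise ergodic theorem yields
\begin{equation*}
\frac1{2N+1}\Tr\bigl(1_N f(\calP_\varkappa)1_N\bigr)\longrightarrow \bbE\bigl\{\jap{f(\calP_\varkappa)\delta_0,\delta_0}\bigr\}\quad\text{a.s.}
\end{equation*}
A single null set is obtained by taking $f$ in a countable dense subset of $C([\sigma_{\min},\sigma_{\max}])$ and extending by the uniform operator-norm bound from Section~\ref{sec:aa1}. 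Second, one compares $1_N f(\calP_\varkappa)1_N$ with $f(\calP_\varkappa^{(N)}|_{\Ran\calP_\varkappa^{(N)}})$: by Weierstrass approximation it suffices to take $f(\lambda)=\lambda^p$, and the standard identity that the non-zero spectra of $A^*DA$ and $\sqrt{D}AA^*\sqrt{D}$ coincide identifies the eigenvalues of $\calP_\varkappa^{(N)}|_{\Ran}$ with those of the $(2N+1)\times(2N+1)$ matrix $D_N^{1/2}G^{(N)}D_N^{1/2}$, where $D_N=\mathrm{diag}\{\varkappa_n\}_{|n|\le N}$. Both traces then expand into explicit polynomial expressions in $\{\varkappa_n\}$ and the Fourier coefficients $\{\widehat{\varphi}_j\}$, and their difference (after division by $2N+1$) vanishes in the limit.

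The moment identities \eqref{eq:cc2a}--\eqref{eq:cc3} follow by direct computation. For \eqref{eq:cc2a}, $\jap{\calP_\varkappa\delta_0,\delta_0}=\sum_m\varkappa_m|\jap{\delta_0,\psi_m}|^2$; taking expectation and using $\sum_m|\jap{\delta_0,\psi_m}|^2=\widehat\varphi_0=1$ (Parseval in the realisation of Section~\ref{sec:bbb}) yields $\bbE\{\varkappa_0\}$. For \eqref{eq:cc3}, expanding $\norm{\calP_\varkappa\delta_0}^2$ as a double sum and invoking independence of $\{\varkappa_n\}$ splits the result into a diagonal contribution equal to $\bbE\{\varkappa_0^2\}$ and an off-diagonal contribution equal to $(\bbE\{\varkappa_0\})^2\bigl[\sum_m|\widehat\varphi_m|^2-1\bigr]$, the identity $\sum_m|\widehat\varphi_m|^2=\frac{1}{2\pi}\int\varphi^2$ coming from $L_{\varphi^{1/2}}L_\varphi L_{\varphi^{1/2}}=L_{\varphi^2}$; combining gives $\bbV\{\varkappa_0\}+(\bbE\{\varkappa_0\})^2\sum_m|\widehat\varphi_m|^2$ as claimed.

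The main obstacle is the comparison step above. In the standard Anderson/Schr\"odinger setting the two truncations coincide off a boundary layer of width $O(1)$ and the comparison is essentially free; here the non-locality of the Laurent operator $L_{\varphi^{1/2}}$ means that $1_N\calP_\varkappa 1_N$ and $\calP_\varkappa^{(N)}$ differ generically at every site. The requisite $o(N)$ control on the trace difference must therefore be established \emph{stochastically}, via a Chebyshev/variance argument that exploits the i.i.d.\ structure of $\{\varkappa_n\}$ together with an algebraic cancellation of the total weight of the boundary coefficients produced by the polynomial expansion.
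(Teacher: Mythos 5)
Your setup (ergodicity of the $\ell^2(\bbZ)$ realisation, the definition of the candidate measure as $\bbE\{\jap{f(\calP_\varkappa)\delta_0,\delta_0}\}$, the identification of the eigenvalues of $\calP_\varkappa^{(N)}|_{\Ran\calP_\varkappa^{(N)}}$ with those of $D_N^{1/2}G^{(N)}D_N^{1/2}$, and both moment computations) is correct, but the proof has a genuine gap exactly at the step you yourself flag as ``the main obstacle'': the comparison of $\frac1{2N+1}\Tr\bigl(1_Nf(\calP_\varkappa)1_N\bigr)$ (equivalently, by Proposition~\ref{prp:A2}, of $\Tr f(1_NL_\theta\varkappa L_\theta 1_N)$) with $\frac1{2N+1}\Tr f\bigl(D_N^{1/2}G^{(N)}D_N^{1/2}\bigr)$ is never carried out. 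The sketch offered for it is not convincing: the discrepancy between the two truncations is structural, not stochastic, so a Chebyshev/variance argument in $\{\varkappa_n\}$ does not by itself close it; and since the symbol is only assumed to satisfy $0\le\varphi\in L^\infty(\bbT)$, the Fourier coefficients $\widehat\theta_n$ are merely square-summable, so there is no decay with which to confine the ``boundary'' terms produced by expanding $\Tr\bigl[1_N(L_\theta\varkappa L_\theta)^p1_N\bigr]-\Tr\bigl[(1_N\sqrt{\varkappa}L_\varphi\sqrt{\varkappa}1_N)^p\bigr]$ to an $o(N)$ set of sites. Without this step, the convergence \eqref{eq:cc2} to \emph{your} candidate measure is not established, and the moment identities, though computed correctly for that candidate, are then not yet attached to the limit of the counting measures.

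The difficulty disappears if you define the IDS through the swapped product rather than through $\calP_\varkappa=L_\theta\varkappa L_\theta$: set $\myQ_\varkappa=\sqrt{\varkappa}L_\varphi\sqrt{\varkappa}$, which is ergodic for the same shift, and put $\nu(\Delta)=\bbE\{\jap{\chi_\Delta(\myQ_\varkappa)\delta_0,\delta_0}\}$. Its compression is exactly $1_N\myQ_\varkappa 1_N=\{\sqrt{\varkappa_n}\widehat\varphi_{n-m}\sqrt{\varkappa_m}\}_{n,m=-N}^N=D_N^{1/2}G^{(N)}D_N^{1/2}$, i.e.\ precisely the matrix you identified as carrying the non-zero spectrum of $\calP_\varkappa^{(N)}$; indeed, writing $\calP_\varkappa^{(N)}=L_\theta\sqrt{\varkappa}\,1_N\,\sqrt{\varkappa}L_\theta$ and using cyclicity of the trace gives
\begin{equation*}
\Tr\bigl(\calP_\varkappa^{(N)}\bigr)^m=\Tr\bigl(1_N\myQ_\varkappa 1_N\bigr)^m,\qquad m\ge1,
\end{equation*}
an exact identity with no error term, so Proposition~\ref{prp:A2} applied to $\myQ_\varkappa$ yields \eqref{eq:cc2} directly after Weierstrass approximation. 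The support statement and the moments then follow as in your computation (your first and second moments for $\bbE\{\jap{f(\calP_\varkappa)\delta_0,\delta_0}\}$ agree numerically with those of $\nu_{\myQ_\varkappa}$, which is consistent, but the clean route computes them from $\myQ_\varkappa$ and the law of large numbers). In short: the idea is salvageable, but the one step your argument leaves open is the whole point, and the order-swap $X^*X\leftrightarrow XX^*$ is the device that makes it trivial.
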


Of course, the restriction onto the range of $\calP_\varkappa^{(N)}$ in \eqref{eq:cc2} is important only if $f(0)\not=0$. If $f(0)=0$, it can be omitted. The measure $\nu_{\calP_\varkappa}$ is called the \emph{integrated density of states (IDS) measure} for $\calP_\varkappa$. 
As it is standard in weak convergence arguments, the limiting relation \eqref{eq:cc2} can be extended to $f=\chi_{(\lambda,\infty)}$ as long as $\lambda$ is not a point mass of the measure $\nu_{\calP_\varkappa}$.

\begin{example}\label{exa:bb4}
Suppose $\psi_n$ are orthogonal; then $\nu_{\calP_\varkappa}=\bbP_0$. 
\end{example}

\begin{example}\label{exa:bb5}
Suppose $\varkappa_n$ are non-random, with $\varkappa_n=1$ for all $n$. 
Then, as discussed in Section~\ref{sec.bb}, the operator $\calP^{(N)}_1$ is unitarily equivalent to a finite truncation of the Gram matrix $G$. In this case, by the First Szeg\H{o} limit theorem, the measure $\nu_{\calP_1}$ is the push-forward of the Lebesgue measure on the unit circle by the symbol $\varphi$, i.e. 
\[
\nu_{\calP_1}(\Delta)=\frac1{2\pi}\int_{\{k: \varphi(k)\in\Delta\}}\dd k. 
\]
\end{example}

The proof of Theorem~\ref{thm:aa3} is given in Section~\ref{sec.cc}. Similarly to Theorem~\ref{thm:aa2}, once we pass to the realisation of $\calP_\varkappa$ discussed in Section~\ref{sec:bbb}, the proof becomes a matter of appealing to the standard general results on spectral theory of ergodic operators. As a result, Theorem~\ref{thm:aa3} is valid for any bounded ergodic sequence $\{\varkappa_n\}_{n\in\bbZ}$, not necessarily for a sequence of i.i.d. variables. 

\subsection{A Wegner type estimate for $\nu_{\calP_\varkappa}$}
Example~\ref{exa:bb4} and a little reflection suggest that if $\bbP_0$ in \eqref{eq:P0} is absolutely continuous, i.e. 
\[
\dd\bbP_0(\lambda)=\rho(\lambda)\dd\lambda
\]
with some density function $\rho$, then we should expect $\nu_{\calP_\varkappa}$ to be absolutely continuous. This is indeed the case; in mathematical physics literature, this is called the Wegner estimate, \cite[Theorem~4.21]{Pa-Fi:92}. 
\begin{theorem}\label{thm:aa4}
Assume that $\bbP_0$ is absolutely continuous with the density $\rho$ uniformly bounded by $\rho_{\max}<\infty$. Then the IDS measure $\nu_{\calP_\varkappa}$ is absolutely continuous with the density satisfying the Wegner type estimate
\[
\frac{\dd\nu_{\calP_\varkappa}(\lambda)}{\dd\lambda}
\leq 
\frac{\rho_{\max}\varkappa_{\max}}{\lambda}, \quad \lambda>0.
\]
\end{theorem}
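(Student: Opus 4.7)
The plan is to adapt the classical Wegner estimate for the Anderson model to our setting, working with the finite-rank approximation $\calP_\varkappa^{(N)}=\sum_{|n|\leq N}\varkappa_n P_n$, where $P_n=\jap{\bigcdot,\psi_n}\psi_n$ are rank-one orthogonal projections. The only substantial deviation from the classical argument comes from the fact that $\sum_{|n|\leq N}P_n=\calP_1^{(N)}$ rather than the identity; the factor $\varkappa_{\max}/\lambda$ in the theorem emerges precisely from handling this discrepancy through positivity.

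The first step would be the rank-one spectral averaging lemma (Combes--Hislop, Stollmann). Fixing $n$ and all $\varkappa_m$ with $m\neq n$, I would write $\calP_\varkappa^{(N)}=R_n+\varkappa_n P_n$ with $R_n$ independent of $\varkappa_n$. The Krein resolvent formula gives $\jap{(R_n+tP_n-z)^{-1}\psi_n,\psi_n}=m(z)/(1+tm(z))$ with $m(z)=\jap{(R_n-z)^{-1}\psi_n,\psi_n}$, and a short direct calculation yields $\int_{\bbR}\Im\{m(z)/(1+tm(z))\}\,dt=\pi$ for $\Im z>0$. Combined with Stieltjes inversion, this gives
\[
\int \jap{\chi_J(\calP_\varkappa^{(N)})\psi_n,\psi_n}\,\rho(\varkappa_n)\,d\varkappa_n\leq\rho_{\max}\abs{J}
\]
for every bounded interval $J$. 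Taking expectation in the remaining variables and summing over $|n|\leq N$, using $\jap{\chi_J\psi_n,\psi_n}=\Tr[\chi_J(\calP_\varkappa^{(N)})P_n]$ and $\sum_{|n|\leq N}P_n=\calP_1^{(N)}$, one obtains the intermediate bound
\[
\bbE\bigl\{\Tr\bigl[\chi_J(\calP_\varkappa^{(N)})\calP_1^{(N)}\bigr]\bigr\}\leq\rho_{\max}\abs{J}\,(2N+1).
\]

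The next step, which is the main novel ingredient, is to convert this into a bound on $\Tr\chi_J(\calP_\varkappa^{(N)})$ itself via positivity. Since $\calP_\varkappa^{(N)}\leq\varkappa_{\max}\calP_1^{(N)}$, on $\Ran\chi_J(\calP_\varkappa^{(N)})$ with $J\subset(0,\infty)$ the operator $\calP_1^{(N)}$ is bounded below by $(\inf J)/\varkappa_{\max}$, and hence by cyclicity of the trace
\[
\Tr\bigl[\chi_J(\calP_\varkappa^{(N)})\calP_1^{(N)}\bigr]\geq\frac{\inf J}{\varkappa_{\max}}\Tr\chi_J(\calP_\varkappa^{(N)}).
\]
Combined with the previous bound, this gives $\bbE\{\Tr\chi_J(\calP_\varkappa^{(N)})\}\leq(\varkappa_{\max}\rho_{\max}\abs{J}/\inf J)\,(2N+1)$; the factor $\varkappa_{\max}/\inf J$ reflects the multiplicative rather than additive structure of our random family.

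Finally, to transfer the bound to the IDS, I would invoke Theorem~\ref{thm:aa3}: a.s.~weak convergence $\nu^{(N)}_{\calP_\varkappa}\to\nu_{\calP_\varkappa}$, combined with Portmanteau's inequality for open $J\subset(0,\infty)$ and Fatou's lemma under $\bbE$, yields
\[
\nu_{\calP_\varkappa}(J)\leq\liminf_{N\to\infty}\frac{\bbE\{\Tr\chi_J(\calP_\varkappa^{(N)})\}}{2N+1}\leq\frac{\varkappa_{\max}\rho_{\max}\abs{J}}{\inf J}.
\]
Specialising to $J=(\lambda-\eps,\lambda+\eps)$, dividing by $2\eps$ and letting $\eps\to 0$, Lebesgue's differentiation theorem then delivers absolute continuity on $(0,\infty)$ together with the claimed pointwise density bound. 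The main obstacle I expect is setting up the spectral averaging lemma cleanly; the remainder is essentially bookkeeping, with the positivity step that eliminates $\calP_1^{(N)}$ being the one genuinely new ingredient relative to the Anderson case.
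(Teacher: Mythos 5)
Your proposal is correct, but it follows a genuinely different route from the paper. The paper never passes through finite-volume eigenvalue counting: it works directly with the infinite-volume operators, using the definition $\nu_{\calP_\varkappa}(\Delta)=\bbE\{\jap{\chi_\Delta(\myQ_\varkappa)\delta_0,\delta_0}\}$, spectral averaging in the \emph{single} variable $\varkappa_0$ (since $\calP_\varkappa$ is a rank-one perturbation in $\varkappa_0$ with vector $\psi_0$), and a measure-transfer identity (Lemma~\ref{lma:e3}), $\lambda\,\dd\widetilde\nu_0(\lambda;\varkappa)=\varkappa_0\,\dd\nu_0(\lambda;\varkappa)$, relating the spectral measure of $\myQ_\varkappa$ at $\delta_0$ to that of $\calP_\varkappa$ at $\psi_0$; the factor $\varkappa_{\max}/\lambda$ comes from this identity together with $\varkappa_0\le\varkappa_{\max}$. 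You instead run the classical finite-volume Wegner scheme: spectral averaging at every site $n$, summation to get $\bbE\{\Tr[\chi_J(\calP_\varkappa^{(N)})\calP_1^{(N)}]\}\le\rho_{\max}\abs{J}(2N+1)$, then the positivity bound $\calP_\varkappa^{(N)}\le\varkappa_{\max}\calP_1^{(N)}$ to lower-bound $\calP_1^{(N)}$ by $(\inf J)/\varkappa_{\max}$ on $\Ran\chi_J(\calP_\varkappa^{(N)})$, and finally Theorem~\ref{thm:aa3} with Portmanteau and Fatou to transfer the uniform-in-$N$ bound to $\nu_{\calP_\varkappa}$; your $\varkappa_{\max}/\lambda$ arises from the positivity step rather than from a measure identity. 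Both arguments are sound and give the same estimate. The paper's route buys a cleaner, limit-free derivation (no Portmanteau, no need to know that the truncated Gram matrices are nonsingular so that $\nu^{(N)}_{\calP_\varkappa}$ are probability measures --- a fact which does hold here since $\widehat{\varphi}_0=1$ forces $G^{(N)}>0$, but which you tacitly use); your route buys a bound at fixed finite volume, $\bbE\{\Tr\chi_J(\calP_\varkappa^{(N)})\}\le\varkappa_{\max}\rho_{\max}\abs{J}(2N+1)/\inf J$, which is the form of Wegner estimate typically needed as input for multiscale analysis, and it avoids Lemma~\ref{lma:e3} and the $\myQ_\varkappa$ realisation altogether. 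Just make sure, when writing it up, to state explicitly that all intervals $J$ are taken with $\inf J>0$ (as in the paper, nothing is claimed about a possible mass at $\lambda=0$ coming from $\Ker L_\varphi$), and that the Lebesgue differentiation step only yields the density bound almost everywhere on $(0,\infty)$, which is all the theorem asserts.
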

The proof is given in Section~\ref{sec:e}. 
We note that this Wegner type estimate is independent of the symbol $\varphi$. 

\subsection{Point masses of $\nu_{\calP_\varkappa}$}
Continuing this line of thought, it is reasonable to ask whether $\nu_{\calP_\varkappa}$ is continuous if the distribution $\bbP_0$ is less regular than the one described in Theorem~\ref{thm:aa4}, while $\varphi$ is non-constant. For comparison, we note that for Anderson model, the IDS measure is known to be logarithmically H\"older continuous \cite[Theorem~11.11]{Pa-Fi:92}, but the proof relies on the Thouless formula -- the mechanism that seems to be absent in our model (see also \cite[Theorem~3.4]{Pa-Fi:92} for further results in this direction). It may be surprising therefore that in our model, point masses of $\nu_{\calP_\varkappa}$ can occur. As above, we denote $\sigma_{\max}=\varkappa_{\max}\varphi_{\max}$. 

\begin{theorem}\label{thm:aa4a}
\begin{enumerate}[\rm (i)]
\item
Suppose $\varphi(t)=\varphi_{\max}$ on an arc of length $2a$, $0<a<\pi$, of the unit circle, and 
\begin{equation}
\bbP_0(\{\varkappa_{\max}\})+\frac{a}\pi>1.
\label{eq:aa17}
\end{equation}
Then $\nu_{\calP_\varkappa}(\{\sigma_{\max}\})>0$.
\item 
Suppose that the set $\{t\in\bbT: \varphi(t)=\varphi_{\max}\}$ is contained in an arc of length $2a$, $0<a<\pi$ (up to sets of measure zero on $\bbT$) and 
\[
\bbP_0(\{\varkappa_{\max}\})+\frac{a}\pi\leq 1.
\]
Then $\nu_{\calP_\varkappa}(\{\sigma_{\max}\})=0$. 
In particular, this is the case if the set $\{t\in\bbT: \varphi(t)=\varphi_{\max}\}$ has measure zero. 
\end{enumerate}
\end{theorem}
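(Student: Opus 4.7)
The plan is to identify the $\sigma_{\max}$-eigenspace of $\calP_\varkappa$ explicitly, then use a von Neumann algebra trace argument for (i) and an entire-function / Jensen argument for (ii). Working in the $\ell^2(\bbZ)$ realisation of Section~\ref{sec:bbb}, $\calP_\varkappa$ is unitarily equivalent to $L_{\sqrt\varphi}M_\varkappa L_{\sqrt\varphi}$, where $M_\varkappa$ is multiplication by $\{\varkappa_n\}$ on $\ell^2(\bbZ)$ and $L_{\sqrt\varphi}$ is the Laurent operator with symbol $\sqrt\varphi$. Setting $g=L_{\sqrt\varphi}f$, the quadratic form satisfies
\[
\langle\calP_\varkappa f,f\rangle=\sum_n \varkappa_n|g(n)|^2\leq\varkappa_{\max}\|g\|^2=\varkappa_{\max}\int_\bbT\varphi|\widehat f|^2\frac{dk}{2\pi}\leq\sigma_{\max}\|f\|^2.
\]
Equality throughout, which is necessary for $f$ to be a $\sigma_{\max}$-eigenvector, forces $\supp\widehat f\subset E_\varphi:=\{\varphi=\varphi_{\max}\}$ from the second inequality (so that $g=\sqrt{\varphi_{\max}}\,f$) and then $\supp f\subset E_\varkappa:=\{n:\varkappa_n=\varkappa_{\max}\}$ from the first. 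The converse is a direct verification. Hence the eigenspace is $V_\varkappa=\Ran Q_\varkappa\cap\Ran\Pi_\varphi$, where $Q_\varkappa$ is multiplication by $\chi_{E_\varkappa}$ and $\Pi_\varphi$ is the Fourier projection onto $E_\varphi$. By standard ergodic theory, $\nu_{\calP_\varkappa}(\{\sigma_{\max}\})$ equals the per-site trace $\tau(P_{V_\varkappa})=\bbE\langle P_{V_\varkappa}\delta_0,\delta_0\rangle$, with $\tau(Q_\varkappa)=\bbP_0(\{\varkappa_{\max}\})=:p$ and $\tau(\Pi_\varphi)=|E_\varphi|/(2\pi)$.

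For part (i), $E_\varphi$ contains an arc of length $2a$, so $\tau(\Pi_\varphi)\geq a/\pi$. The parallelogram identity for projections in a finite-trace von Neumann algebra, $\tau(P_1\wedge P_2)+\tau(P_1\vee P_2)=\tau(P_1)+\tau(P_2)$, combined with $\tau(Q_\varkappa\vee\Pi_\varphi)\leq 1$, yields $\tau(Q_\varkappa\wedge\Pi_\varphi)\geq p+a/\pi-1>0$ under hypothesis~\eqref{eq:aa17}, giving $\nu_{\calP_\varkappa}(\{\sigma_{\max}\})>0$.

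For part (ii), assume without loss of generality that $E_\varphi\subset[-a,a]$, and let $f\in V_\varkappa$. Define
\[
F(z)=\frac{1}{2\pi}\int_{-a}^a \widehat f(k)\,e^{izk}\,dk,\qquad z\in\bbC.
\]
By Paley--Wiener, $F$ is entire of exponential type $\leq a$, $F\in L^2(\bbR)$, and $F|_\bbZ=f$. The support constraint $\supp f\subset E_\varkappa$ forces $F(n)=0$ for all $n\in E_\varkappa^c$, a set of natural density $1-p$ almost surely by the strong law of large numbers. Jensen's formula combined with the Phragm\'en--Lindel\"of bound $|F(z)|\leq Ce^{a|\Im z|}$ gives the upper estimate $\int_0^r n(t)/t\,dt\leq (2a/\pi)r+O(1)$ on the zero counting function, while the integer zeros alone contribute $\sum_{n\in E_\varkappa^c,\,|n|\leq r}\log(r/|n|)=2(1-p)r+o(r)$. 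In the strict regime $p+a/\pi<1$, these two estimates are incompatible unless $F\equiv 0$, so $V_\varkappa=\{0\}$ a.s.\ and $\nu_{\calP_\varkappa}(\{\sigma_{\max}\})=0$. The chief obstacle is the boundary case $p+a/\pi=1$, where Jensen is asymptotically tight. I would handle this by a Beurling-type uniqueness argument: almost surely $E_\varkappa^c$ contains arbitrarily long runs produced by long stretches of $\varkappa_n=\varkappa_{\max}$, yielding gaps inconsistent with the regularity required for any sequence to be the zero set of a nonzero $L^2$ entire function of exponential type $a$, so $F\equiv 0$ still follows.
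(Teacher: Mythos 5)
Your part (i) is correct and takes a genuinely different route from the paper. The paper constructs an explicit eigenvector at $\sigma_{\max}$: using the probabilistic lemma on Kahane density and the Beurling--Malliavin theorem, it shows that when $p+a/\pi>1$ the exponentials indexed by $\{n:\varkappa_n\ne\varkappa_{\max}\}$ are incomplete in $L^2(-a,a)$, and an annihilating function supplies an eigenvector of $\myQ_\varkappa$; Proposition~\ref{prp:A3} then gives the point mass. You instead identify the $\sigma_{\max}$-eigenspace exactly as $\Ran Q_\varkappa\cap\Ran\Pi_\varphi$ (the same computation works verbatim for $\myQ_\varkappa$, which is what you actually need, since $\nu_{\calP_\varkappa}$ is defined through the diagonal of $\chi_\Delta(\myQ_\varkappa)$, see \eqref{eq:cc0}), and then apply Kaplansky's parallelogram law $\tau(P\wedge Q)+\tau(P\vee Q)=\tau(P)+\tau(Q)$ for the trace per unit volume $\tau(A)=\bbE\jap{A\delta_0,\delta_0}$ on shift-covariant operators. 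Provided you justify that $\tau$ is indeed a finite faithful trace on that von Neumann algebra (standard, but not part of the paper's toolkit and not proved by you), this is a clean argument and even yields the quantitative bound $\nu_{\calP_\varkappa}(\{\sigma_{\max}\})\ge p+a/\pi-1$, which the paper's qualitative argument does not give.

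Part (ii) has a genuine gap. Your Paley--Wiener/Jensen argument correctly settles the strict case $p+a/\pi<1$ (modulo the routine normalisation $F(0)\ne0$ in Jensen's formula), and it is more elementary than the paper's route. But the theorem asserts the conclusion also at equality $p+a/\pi=1$, and your proposed fix fails. First, you invoke the wrong runs: long stretches of $\varkappa_n=\varkappa_{\max}$ produce long runs inside $E_\varkappa$, i.e.\ long \emph{gaps} in the zero set $E_\varkappa^c$ of $F$, which impose no constraint whatsoever; what could conceivably help are long blocks of consecutive integers in $E_\varkappa^c$, i.e.\ long stretches of $\varkappa_n\ne\varkappa_{\max}$. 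Second, even granting such blocks, the principle you appeal to is false: a nonzero entire function of exponential type $a$ lying in $L^2(\bbR)$ can vanish on a set of integers containing arbitrarily long blocks of consecutive integers, as long as the effective (Beurling--Malliavin) density of that set stays below $a/\pi$ --- incompleteness of the corresponding exponentials in $L^2(-a,a)$ then produces exactly such a function. So ``arbitrarily long runs'' is not by itself inconsistent with $F\not\equiv0$; the critical case is precisely where the density count is tight, and this is what the paper's input (Kahane density of the Bernoulli set equals $1-p$, hence completeness radius $\pi(1-p)\ge a$) is there to handle. If you wish to stay with Jensen, you would need a genuinely probabilistic refinement, e.g.\ an infinitely-often lower bound on $\int_1^r\bigl(n_{E_\varkappa^c}(t)-(1-p)t\bigr)t^{-1}\,\dd t$ exploiting the $\sqrt{t}$-size fluctuations of the Bernoulli counting function, which you neither formulate nor prove. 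As written, your part (ii) establishes the statement only for $p+a/\pi<1$.
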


One can similarly treat a point mass of $\nu_{\calP_\varkappa}$ at the bottom of the spectrum $\sigma_{\min}$. However, our construction doesn't work for interior points of the spectrum. 

The proof is given in Section~\ref{sec.dd}. It relies on the deep classical theorem of Beurling and Malliavin on the completeness of a system of exponentials.

\subsection{Lifshitz tails asymptotics of the IDS measure $\nu_{\calP_\varkappa}$}
Let us consider the situation discussed in part (ii) of the previous theorem, assuming that the set $\{k\in\bbT: \varphi(k)=\varphi_{\max}\}$ has measure zero. We already know that $\nu_{\calP_\varkappa}(\{\sigma_{\max}\})=0$; now consider the behaviour of $\nu_{\calP_\varkappa}$ near $\sigma_{\max}$. According to the definition of $\nu_{\calP_\varkappa}$, we should evaluate the number of eigenvalues of $\calP_\varkappa^{(N)}$ near $\sigma_{\max}$ as $N\to\infty$. This forces all variables $\varkappa_n$ for $\abs{n}\leq N$ to take values near $\varkappa_{\max}$; this is a rare event. This line of thought led the theoretical physicist I.M.Lifshitz to conjecture that the IDS  of the Schr\"odinger operator with random potential is exponentially thin near the bottom of the spectrum. This became known as Lifshitz tails. The logic is applicable to either the top or the bottom of the spectrum. 

We prove the exponential asymptotics of $\nu_{\calP_\varkappa}$ (the Lifshitz tails asymptotics) near the top of the spectrum $\sigma_{\max}$ under some regularity assumptions on the symbol $\varphi$.

\begin{theorem}\label{thm:aa5}
Assume that the symbol $\varphi$ is $C^2$-smooth on $\bbT$ and the maximum of $\varphi$ is non-degenerate and is attained at a single point $k_{\max}$. Further, assume that $\bbP_0$ is not supported at one point, i.e. $\varkappa_{\min}<\varkappa_{\max}$, and 
\begin{equation}
\bbP_0((\varkappa_{\max}-\eps,\varkappa_{\max}])\geq C\eps^\ell
\label{eq:aa11a}
\end{equation}
for some $\ell>0$. Then
\begin{align}
\lim_{\delta\to0_+} \log \bigl(-\log \nu_{\calP_\varkappa}((\sigma_{\max}-\delta,\sigma_{\max}])\bigr)/\log\delta = -\frac12.
\label{eq:aa12} 
\end{align}
\end{theorem}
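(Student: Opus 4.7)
\textbf{The plan} is to establish matching upper and lower exponential bounds on $\nu_{\calP_\varkappa}((\sigma_{\max}-\delta,\sigma_{\max}])$, both of the form $\exp(-c_\pm\delta^{-1/2+o(1)})$, from which \eqref{eq:aa12} follows by taking $\log(-\log(\bigcdot))/\log\delta$. Throughout I work in the $\ell^2(\bbZ)$-realisation of Section~\ref{sec:bbb}, in which $\calP_\varkappa=L_\varphi^{1/2}M_\varkappa L_\varphi^{1/2}$, and pass to the non-negative reformulation
\[
\widetilde H_\varkappa:=\sigma_{\max}I-\calP_\varkappa=\varkappa_{\max}L_{\widetilde\varphi}+L_\varphi^{1/2}(\varkappa_{\max}I-M_\varkappa)L_\varphi^{1/2},\qquad \widetilde\varphi:=\varphi_{\max}-\varphi\geq 0,
\]
so that the top spectral edge of $\calP_\varkappa$ becomes the bottom ($=0$) of $\widetilde H_\varkappa$. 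Both summands are non-negative, and the $C^2$-hypothesis with non-degenerate maximum at $k_{\max}$ yields $\widetilde\varphi(k)\asymp (k-k_{\max})^2$ near $k_{\max}$.

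\textbf{Lower bound on $\nu$} (good-box construction). Choose $L\sim\delta^{-1/2}$ and $\eps\sim\delta$. For a block $B\subset\bbZ$ of length $2L+1$, let $\Omega_B=\{\varkappa_n\in[\varkappa_{\max}-\eps,\varkappa_{\max}]:n\in B\}$; by independence and \eqref{eq:aa11a}, $\bbP(\Omega_B)\geq(c\eps^\ell)^{|B|}$. Take a trial state $\psi_B$ whose Fourier transform is a smooth bump of width $\sim L^{-1}$ centred at $k_{\max}$, cut off in position space to a box slightly larger than $B$ (Plancherel controls the truncation error). Using $\widetilde\varphi(k)\leq C(k-k_{\max})^2$ one obtains $\langle\varkappa_{\max}L_{\widetilde\varphi}\psi_B,\psi_B\rangle\leq C\|\psi_B\|^2/L^2\leq C\delta\|\psi_B\|^2$, while on $\Omega_B$ the disorder part is $\leq\eps\langle L_\varphi\psi_B,\psi_B\rangle\leq C\delta\|\psi_B\|^2$ up to small tail contributions from the decay of $(L_\varphi^{1/2}\psi_B)_n$ outside $B$. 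Min-max then supplies an eigenvalue of $\widetilde H_\varkappa$ in $[0,C\delta]$ on $\Omega_B$. Counting the good boxes among $\sim N/L$ disjoint translates of $B$ in $[-N,N]$ and passing to the limit via Theorem~\ref{thm:aa3} gives
\[
\nu_{\calP_\varkappa}((\sigma_{\max}-C\delta,\sigma_{\max}])\geq \tfrac{1}{L}(c\eps^\ell)^{|B|}\geq \exp(-CL\log(1/\delta)),
\]
proving $\liminf_{\delta\to 0_+}\log(-\log\nu)/\log\delta\geq -\tfrac12$.

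\textbf{Upper bound on $\nu$} (block decomposition and large deviations). Partition $\bbZ$ into disjoint blocks $B_j$ of length $L\sim\delta^{-1/2}$ and use a smooth IMS-type partition of unity on this scale to bracket $\widetilde H_\varkappa\geq\bigoplus_j H_j-R$ with $H_j$ acting on $\ell^2(B_j)$ and $R$ small. Each $H_j$ dominates the truncated Toeplitz operator $\varkappa_{\max}L_{\widetilde\varphi}^{(B_j)}$, whose smallest eigenvalue is $\asymp 1/L^2$ by the classical Szeg\H{o}-type asymptotics at a non-degenerate symbol zero. A quadratic-form argument --- using that approximate minimisers of $L_{\widetilde\varphi}^{(B_j)}$ are Fourier-concentrated at $k_{\max}$, so that $L_\varphi$ acts on them by approximately $\varphi_{\max}$ --- shows that $H_j$ admits an eigenvalue $\leq 2\delta$ only on the event $\tfrac1L\sum_{n\in B_j}(\varkappa_{\max}-\varkappa_n)\leq C\delta$. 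Since $\varkappa_{\min}<\varkappa_{\max}$, Cram\'er's theorem gives probability $\leq e^{-cL}$ for this event. Summing over the $\sim N/L$ blocks and letting $N\to\infty$ yields $\nu_{\calP_\varkappa}((\sigma_{\max}-\delta,\sigma_{\max}])\leq L^{-1}e^{-cL}\leq e^{-c\delta^{-1/2}}$, proving $\limsup\leq-\tfrac12$.

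\textbf{Main obstacle.} The critical step is the IMS-type bracketing in the presence of the non-local Laurent operators $L_\varphi$ and $L_{\widetilde\varphi}$. A naive block restriction loses an off-diagonal remainder of operator norm $\sim\sum_{|m|>L/2}|\widehat\varphi_m|=O(L^{-1})$ under mere $C^2$-smoothness, which dwarfs the genuine threshold $L^{-2}$. To close the upper bound one needs a smooth position-space partition of unity for which $[\chi_j,L_{\widetilde\varphi}]$ is genuinely of second order in $\nabla\chi_j$ (size $L^{-2}$), combined with the quadratic vanishing of $\widetilde\varphi$ at $k_{\max}$ to absorb the remainder into the positive main term. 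A secondary difficulty is that the ``potential'' $L_\varphi^{1/2}(\varkappa_{\max}I-M_\varkappa)L_\varphi^{1/2}$ is not diagonal, so the reduction to the empirical mean of $\varkappa_{\max}-\varkappa_n$ must be done by a quadratic-form argument that exploits Fourier localisation of the approximate minimisers near $k_{\max}$. This is the place where both the $C^2$-smoothness of $\varphi$ and the non-degeneracy of the maximum enter essentially.
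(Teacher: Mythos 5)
Your overall two–sided strategy and the scales ($L\sim\delta^{-1/2}$, probability of a good block $\geq(c\delta^\ell)^{L}$, large deviations $e^{-cL}$ for bad blocks) are the right ones, and your lower bound is essentially sound (the cross terms of the non-local form between distinct good boxes still need to be controlled -- with only $\abs{\widehat\varphi_k}=O(k^{-2})$ adjacent boxes give errors of order $L^{-1}\log L\gg\delta$, so you must use that good boxes are typically far apart, or do as the paper does and use Neumann bracketing). The genuine gap is in your upper bound, and it is exactly the point you yourself flag as the ``main obstacle'': the IMS-type bracketing of the non-local operator $L_{\widetilde\varphi}$ with error $O(L^{-2})$ is not carried out, and the claim that a smooth partition of unity makes $[\chi_j,[\chi_j,L_{\widetilde\varphi}]]$ ``genuinely of second order'' is false under mere $C^2$-smoothness: the double commutator has kernel $(\chi_j(n)-\chi_j(m))^2\,\widehat{\widetilde\varphi}_{n-m}$, and with $\abs{\widehat{\widetilde\varphi}_k}\asymp k^{-2}$ a Schur bound gives operator norm $\asymp\sum_k\min(1,k^2L^{-2})k^{-2}\asymp L^{-1}$, which swamps the threshold $L^{-2}$; absorbing this remainder into the positive main term would require a relative form bound with respect to $L_{\widetilde\varphi}$ that you neither state nor prove. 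The secondary step -- that a block operator $H_j$ has an eigenvalue $\leq 2\delta$ only when the empirical mean of $\varkappa_{\max}-\varkappa_n$ over $B_j$ is $\leq C\delta$, via ``Fourier concentration of approximate minimisers'' -- is likewise only asserted; for a general truncated symbol the bottom eigenvector is not explicit and this localisation needs a proof.

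The paper sidesteps both difficulties. Following Gebert--Rojas-Molina, it first sandwiches the symbol, $\varphi_-(k+k_{\max})\leq\varphi(k)\leq\varphi_+(k+k_{\max})$ with $\varphi_\pm=\varphi_{\max}-c_\mp+c_\mp\cos k$ having the \emph{same} maximum, and uses that the IDS tail is invariant under a shift of the symbol and monotone under $\varphi_1\leq\varphi_2$ (Lemma~\ref{lma:d1}, via Proposition~\ref{prp:A2a}). This reduces everything to the tridiagonal model $L_{a+b\cos k}=aI+J$, for which Dirichlet--Neumann bracketing is exact (a finite-rank boundary modification, no IMS error), and conjugation by the diagonal $\sqrt\varkappa$ preserves the block decomposition. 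For the upper bound the paper then replaces Temple's inequality by an elementary trick: the top Dirichlet eigenvector is the constant vector $\mathbbm{1}_L$, so $J^{(D)}\leq 2-\mu_L+\mu_L\jap{\bigcdot,\mathbbm{1}_L}\mathbbm{1}_L$ with $\mu_L\sim\pi^2/L^2$, giving the clean norm bound $\norm{a\varkappa+\sqrt\varkappa J^{(D)}\sqrt\varkappa}\leq\sigma_{\max}-\mu_L\kappamax+\mu_L\jap{\varkappa}$ in terms of the empirical mean $\jap{\varkappa}$, after which Cram\'er's theorem finishes the argument exactly as you intended. So the comparison-with-cosine step is not an optional convenience but the device that eliminates the unresolved crux of your proposal; as written, your upper bound does not close.
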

Roughly speaking, \eqref{eq:aa12} says that 
\[
\nu_{\calP_\varkappa}((\sigma_{\max}-\delta,\sigma_{\max}])\sim \ee^{-C\delta^{-1/2}}
\]
as $\delta\to0$. 
In the same way, one can prove Lifshitz tails asymptotics at the lower edge of the spectrum:
\[
\nu_{\calP_\varkappa}([\sigma_{\min},\sigma_{\min}+\delta))\sim \ee^{-C\delta^{-1/2}}
\]
as $\delta\to0$, under similar assumptions on $\varphi$ near $\varphi_{\min}$ and on $\bbP_0$ near $\varkappa_{\min}$.

Assumptions on $\varphi$ can be relaxed, allowing multiple points of maximum. However, if the maximum is degenerate, the exponent $1/2$ in the asymptotics is expected to change; see  \cite{GRM1} and \cite[Section 10.B and 10.C]{Pa-Fi:92} for a similar result in this direction. 

Roughly speaking, \eqref{eq:aa11a} means that $\bbP_0$ is ``not too thin'' near $\varkappa_{\max}$. For example, if $\bbP_0$ has a point mass at $\varkappa_{\max}$, then \eqref{eq:aa11a} is satisfied. 

The proof of Theorem~\ref{thm:aa5} is given in Section~\ref{sec.d}. It consists of two steps: estimating $\varphi$ by ``model symbols'' of the form $a+b\cos k$ (this idea is borrowed from \cite{GRM1}; see also \cite{Klopp:98}) and using the Dirichlet--Neumann bracketing approach as presented in \cite{Simon} for the model symbols.

\subsection{Anderson type localisation}
It is obvious that if $\{\psi_n\}_{n\in\bbZ}$ are orthogonal to one another (i.e. if the Gram matrix $G$ is diagonal), then for all $\varkappa$ the spectrum of the operator $\calP_\varkappa$ is pure point and $\{\psi_n\}_{n\in\bbZ}$ are the corresponding eigenvectors. Here we prove that if $\{\psi_n\}_{n\in\bbZ}$ are \emph{nearly-orthogonal}, and the distribution of $\varkappa$ is sufficiently regular, then the spectrum of $\calP_\varkappa$ is still pure point. 

\begin{theorem}\label{thm:aa6}
Assume that $\bbP_0$ of \eqref{eq:P0} is uniformly H\"older continuous on $[\varkappa_{\min},\varkappa_{\max}]$:
\begin{equation}
\bbP_0([x-\delta,x+\delta])\leq C\delta^\tau, \quad \forall x\in [\varkappa_{\min},\varkappa_{\max}]
\label{eq:g}
\end{equation}
with some $\tau\in(0,1]$ and $C>0$ independent of $x$. Then there exists a sufficiently small $\eta>0$ (depending on $\bbP_0$) such that for any symbol $\varphi$ satisfying
\begin{equation}
\sum_{n\not=0}\abs{\widehat{\varphi}_n}^{\tau/4}<\eta,
\label{eq:g1}
\end{equation}
the spectrum of $\calP_\varkappa$ is pure point. 
\end{theorem}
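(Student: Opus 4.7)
The strategy is the Aizenman--Molchanov fractional moment method, adapted to the long-range hopping structure of $\calP_\varkappa$. I would first pass to the $\ell^2(\bbZ)$ realisation of Section~\ref{sec:bbb}: on the orthogonal complement of its kernel, $\calP_\varkappa$ is unitarily equivalent to the self-adjoint operator $H:=\sqrt{D_\varkappa}\,L_\varphi\,\sqrt{D_\varkappa}$ on $\ell^2(\bbZ)$, whose matrix entries are $H_{mn}=\sqrt{\varkappa_m\varkappa_n}\,\widehat\varphi_{m-n}$ (so in particular $H_{mm}=\varkappa_m$). Let $G(z;m,n):=\langle\delta_m,(H-z)^{-1}\delta_n\rangle$ for $z\in\bbC\setminus\bbR$. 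The plan is to show, for $s=\tau/4$ and $\eta$ sufficiently small, that $\sum_{m\in\bbZ}\sup_z\bbE\bigl\{|G(z;m,n)|^s\bigr\}$ is finite uniformly in $n$; by the standard fractional-moment localisation arguments this delivers pure point spectrum for $H$, which transfers to $\calP_\varkappa$ by unitary equivalence (together with the kernel discussion of Section~\ref{sec.b5}).

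The first ingredient is the Aizenman--Molchanov a priori bound $\sup_{z\in\bbC\setminus\bbR}\bbE\bigl\{|G(z;m,n)|^s\bigr\}\leq C_s$ for every $s\in(0,\tau)$. This rests on the rank-one structure $\calP_\varkappa=\calP_{\varkappa\setminus m}+\varkappa_m\,|\psi_m\rangle\langle\psi_m|$ on $\calH$: the Krein resolvent formula makes $\langle\psi_m,(\calP_\varkappa-z)^{-1}\psi_n\rangle$ a M\"obius-type function of $\varkappa_m$ with coefficients independent of $\varkappa_m$, so averaging against $\bbP_0$ reduces to the elementary estimate $\int\dd\bbP_0(t)\,|A+tB|^{-s}\leq C_s|B|^{-s}$ for $s<\tau$, which is a direct consequence of the H\"older hypothesis~\eqref{eq:g}. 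The second ingredient is an iterative one-step inequality derived from the equation of motion $(H-z)(H-z)^{-1}\delta_n=\delta_n$. Separating out the diagonal term $\widehat\varphi_0=1$ gives, for $m\neq n$,
\[
(\varkappa_m-z)\,G(z;m,n)\;=\;-\sqrt{\varkappa_m}\sum_{k\neq m}\widehat\varphi_{m-k}\sqrt{\varkappa_k}\,G(z;k,n).
\]
Taking the $s$-th power, using subadditivity $|\sum a_j|^s\leq\sum|a_j|^s$ for $s\in(0,1]$, bounding $\sqrt{\varkappa_j}\leq\varkappa_{\max}^{1/2}$, and decoupling $\varkappa_m$ from $G(z;k,n)$ for $k\neq m$ via the Krein representation together with the a priori bound produces
\[
\bbE\bigl\{|G(z;m,n)|^s\bigr\}\;\leq\;C\sum_{k\neq m}|\widehat\varphi_{m-k}|^s\,\bbE\bigl\{|G(z;k,n)|^s\bigr\},\qquad m\neq n.
\]

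Choosing $s=\tau/4$ and using $|\widehat\varphi_k|\leq 1$, the hypothesis~\eqref{eq:g1} becomes $\sum_{k\neq 0}|\widehat\varphi_k|^s\leq\eta$. Iterating the one-step bound $N$ times, with the a priori bound serving as the terminal estimate, leads to an upper bound of the shape $C_s\sum_{N\geq 1}C^N F^{*N}(m-n)$, where $F(k):=|\widehat\varphi_k|^s\mathbf 1_{k\neq 0}$ satisfies $\|F\|_{\ell^1}\leq\eta$; for $\eta$ small enough that $C\eta<1$, this geometric series is summable in $m$ uniformly in $n$ and $z$, as required. The main obstacle I anticipate is the decoupling step: in the ordinary Anderson model the Aizenman--Schenker--Friedrich--Hundertmark decoupling lemma applies routinely because the random potential enters linearly, but here $\varkappa_m$ enters $H$ multiplicatively through two factors of $\sqrt{\varkappa_m}$ (in the $m$-th row and column), making the perturbation in $\varkappa_m$ rank-two with a quadratic part rather than affine and rank-one. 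The resolution is to perform the decoupling in the $\calH$ picture, where $\calP_\varkappa=\calP_{\varkappa\setminus m}+\varkappa_m|\psi_m\rangle\langle\psi_m|$ is genuinely rank-one in $\varkappa_m$, and then translate the resulting bound back to $\ell^2(\bbZ)$; the exponent $\tau/4$ in~\eqref{eq:g1}, rather than $\tau$, reflects the extra factors of $\sqrt{\varkappa_j}$ in the hopping that have to be absorbed in this process.
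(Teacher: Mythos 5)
Your proposal follows essentially the same route as the paper's proof: the Aizenman--Molchanov fractional moment method applied to $\myQ_\varkappa=\sqrt{\varkappa}L_\varphi\sqrt{\varkappa}$, expanding the resolvent around the diagonal part $\varkappa$ (using $\widehat{\varphi}_0=1$), and resolving the decoupling difficulty by exploiting the rank-one dependence on each $\varkappa_n$ in the original Hilbert-space picture together with the relation between the resolvents of $\calP_\varkappa$ and $\myQ_\varkappa$ --- precisely the paper's Lemmas~\ref{lma:f1}--\ref{lma:f5}. The only cosmetic differences are that the paper closes the fractional-moment estimate by summing over the index and rearranging (rather than iterating a geometric series of convolution powers), and it carries out your ``standard localisation arguments'' explicitly via the Simon--Wolff criterion applied to the cyclic subspaces generated by the vectors $\psi_n$, whose spans exhaust the range of $\calP_\varkappa$.
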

We recall that by our assumption that $\psi_n$ are normalised, we have $\widehat{\varphi}_0=1$, while the sum in \eqref{eq:g1} is over $n\not=0$. The exponent $\tau/4$ in \eqref{eq:g1} can be replaced by any positive number less than $\tau/2$.

This theorem should be contrasted with the opposite scenario: suppose $\varkappa$ is deterministic, e.g. $\varkappa_n=1$ for all $n$ and $\varphi$ is smooth and non-constant on any open interval. Then the spectrum of $\calP_\varkappa$ is purely absolutely continuous.

We follow the fractional moments method of Aizenman--Molchanov (see the original paper \cite{Aiz-Mol} or the monograph \cite{Aiz-War}), developed for the Anderson model. Our argument and results have some common points with \cite{Fi-Pa:90,HKK}.

\section{Realisations of $\calP_\varkappa$}
\label{sec:bbb}

As already mentioned, the choice of the Hilbert space $\calH$ and of the sequence $\{\psi_n\}$ are immaterial to our model, as long as the Gram matrix is fixed. Below we discuss some concrete choices, i.e. \emph{realisations} of our model.

\subsection{Realisation in $\ell^2(\bbZ)$}
By \eqref{eq:aa9}, we can define a bounded non-negative function $\sqrt{\varphi}$ on $\bbT$. 
For typographical reasons, we will write 
\begin{equation}
\theta=\sqrt{\varphi}.
\label{eq:tp}
\end{equation}
Let us take $\calH=\ell^2(\bbZ)$ and 
\[
\psi_n=L_{\theta}\delta_n, \quad n\in\bbZ,
\]
where $L_\theta$ is the Laurent operator $\{\widehat{\theta}_{n-m}\}_{n,m\in\bbZ}$ and $\{\delta_n\}$ is the canonical basis in $\ell^2(\bbZ)$. To check that this is indeed a realisation of our model, we need to compute the Gram matrix of the sequence $\{\psi_n\}_{n\in\bbZ}$ introduced above. We notice first that 
\[
(L_{\theta})^2=L_{\varphi}.
\]
Using this, we have 
\[
\jap{L_{\theta}\delta_m,L_{\theta}\delta_n}_{\ell^2(\bbZ)}
=\jap{(L_{\theta})^2 \delta_m,\delta_n}_{\ell^2(\bbZ)}
=\jap{L_\varphi \delta_m,\delta_n}_{\ell^2(\bbZ)}
=\widehat{\varphi}_{n-m},
\]
as needed, see \eqref{eq:aa31}. Thus, we can write the realisation of $\calP_\varkappa$ in $\ell^2(\bbZ)$ as follows:
\[
\calP_\varkappa
=\sum_{n=-\infty}^\infty \varkappa_n \jap{\bigcdot, L_{\theta}\delta_n}L_{\theta}\delta_n
=L_{\theta}\left(  \sum_{n=-\infty}^\infty \varkappa_n \jap{\bigcdot, \delta_n}\delta_n
\right)L_{\theta}.
\]
Of course, the operator in parentheses here is simply the operator of multiplication by the sequence $\{\varkappa_n\}_{n\in\bbZ}$; by the standard abuse of notation, we will denote this operator simply by $\varkappa$. Then we can write the above formula as
\begin{equation}
\calP_\varkappa =L_{\theta}\varkappa L_{\theta}
\quad \text{ in $\ell^2(\bbZ)$.}
\label{eq:bb1}
\end{equation}

\subsection{A unitarily equivalent form of realisation in $\ell^2(\bbZ)$}
\label{sec:bbb2}
We recall that for any bounded operator $X$ in a Hilbert space, the restrictions 
\[
X^*X|_{(\Ker X)^{\perp}}
\quad \text{ and }\quad
XX^*|_{(\Ker X^*)^{\perp}}
\]
are unitarily equivalent. (We have already seen an example of the use of this principle in \eqref{eq:aa5}.) As a shorthand, we will say that $X^*X$ and $XX^*$ are unitarily equivalent \emph{modulo kernels}. If in addition $\dim\Ker X=\dim\Ker X^*$, then $X^*X$ and $XX^*$ are unitarily equivalent. 

We recall that by our Assumption~\ref{ass:1}(a), all variables $\varkappa_n$ are positive. Thus, we can write \eqref{eq:bb1} as
\[
\calP_\varkappa = (\sqrt{\varkappa} L_{\theta})^*(\sqrt{\varkappa} L_{\theta})
\]
and therefore $\calP_\varkappa$ is unitarily equivalent, modulo kernels, to the operator 
\begin{equation}
\myQ_\varkappa
:=(\sqrt{\varkappa} L_{\theta})(\sqrt{\varkappa} L_{\theta})^*
=\sqrt{\varkappa}(L_{\theta})^2\sqrt{\varkappa}
=\sqrt{\varkappa}L_{\varphi}\sqrt{\varkappa}
\label{eq:bb2}
\end{equation}
in $\ell^2(\bbZ)$. Of course, ${\myQ}_\varkappa$ is no longer a sum of rank-one projections, but it is nevertheless a useful ``model'' of $\calP_\varkappa$. Explicitly, ${\myQ}_\varkappa$ is the doubly-infinite matrix 
\[
{\myQ}_\varkappa
=
\{\sqrt{\varkappa_n}\widehat{\varphi}_{n-m}\sqrt{\varkappa_m}\}_{n,m\in\bbZ},
\]
which can be viewed as a random multiplicative perturbation of the Laurent operator $L_\varphi$. 
We shall soon see that ${\myQ}_\varkappa$ is also an ergodic operator.

\begin{remark}\label{rmk:bb1}
Inspecting the above definitions and using the fact that $\varkappa_n\geq\varkappa_{\min}>0$ for all $n$, we find
\[
\Ker\calP_\varkappa=\Ker L_\varphi,\quad
\Ker{\myQ}_\varkappa=\varkappa^{-1/2}\Ker L_\varphi,
\]
and so the dimensions of the kernels of $\calP_\varkappa$ and ${\myQ}_\varkappa$ coincide. 
Thus, $\calP_\varkappa$ and ${\myQ}_\varkappa$ are unitarily equivalent for \emph{every} $\varkappa$. 
\end{remark}

\subsection{Realisation in $L^2(\bbT)$}
By using the Fourier series correspondence between $L^2(\bbT)$ and $\ell^2(\bbZ)$, we can map the realisation \eqref{eq:bb1} into $L^2(\bbT)$. Indeed, let $\{e_n\}_{n\in\bbZ}$ be the standard trigonometric basis, 
\[
e_n(k)=\ee^{\ii nk}, \quad n\in\bbZ, \quad -\pi\leq k\leq \pi.
\]
We normalise the Lebesgue measure on $\bbT$ to one, so that with this normalisation, $e_n$ are orthonormal. We then set $\calH=L^2(\bbT)$ and (see \eqref{eq:tp})
\begin{equation*}
\psi_n(k)=\theta(k)e_n(k). 
\end{equation*}
It is evident that 
\[
\jap{\theta e_m,\theta e_n}_{L^2(\bbT)}=\jap{\varphi e_m,e_n}_{L^2(\bbT)}=\widehat{\varphi}_{n-m},
\]
and so this is indeed a realisation of our model $\calP_\varkappa$ of \eqref{eq:aa1}. 
We can write
\[
\calP_\varkappa=\theta\left(\sum_{n=-\infty}^\infty \varkappa_n\jap{\bigcdot,e_n}e_n\right)\theta
\quad \text{ in $L^2(\bbT)$.}
\]
We will not use this realisation in this paper, but it may be useful elsewhere. 

\section{Location of the spectrum and the IDS measure: proofs of Theorems~\ref{thm:aa2} and \ref{thm:aa3}}
\label{sec.cc}

\subsection{Spectra of multiplicative perturbations}
In preparation for the proof of Theorem~\ref{thm:aa2}, here we give a general result on the spectra of multiplicative perturbations of self-adjoint operators. We use the notation \eqref{eq:sumset} for sum and product sets. First we need
\begin{proposition}\cite[Chapter V, Theorem 4.10]{Kato}
\label{prp:cc1}
Let $A$ and $B$ be self-adjoint operators on a Hilbert space, with $A$ bounded, $\norm{A}\leq a$. Then $\dist(\sigma(A+B),\sigma(B))\leq a$. Equivalently, $\sigma(A+B)\subset [-a,a]+\sigma(B)$. 
\end{proposition}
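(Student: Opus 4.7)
The plan is to prove the contrapositive: if $\lambda\in\bbR$ satisfies $\dist(\lambda,\sigma(B))>a$, then $\lambda$ lies in the resolvent set of $A+B$. Since $B$ may be unbounded, I would begin by noting that $A+B$ with domain $D(B)$ is self-adjoint, which is the standard Kato--Rellich statement for the case where one summand is bounded (and self-adjoint). With this in place, the spectrum of $A+B$ is a well-defined closed subset of $\bbR$.

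Next I would exploit the functional calculus for $B$: if $\dist(\lambda,\sigma(B))>a$, then $\lambda\in\rho(B)$ and
\[
\norm{(B-\lambda)^{-1}}=\frac{1}{\dist(\lambda,\sigma(B))}<\frac{1}{a}.
\]
Now I would factor, on $D(B)$,
\[
A+B-\lambda=\bigl(I+A(B-\lambda)^{-1}\bigr)(B-\lambda).
\]
The operator $A(B-\lambda)^{-1}$ is bounded with norm at most $a\cdot\frac{1}{\dist(\lambda,\sigma(B))}<1$, so $I+A(B-\lambda)^{-1}$ is invertible on $\calH$ via the Neumann series. Since $B-\lambda$ is a bijection from $D(B)$ onto $\calH$, the factorisation shows that $A+B-\lambda$ is a bijection from $D(B)$ onto $\calH$, i.e.\ $\lambda\in\rho(A+B)$.

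Taking the contrapositive yields: every $\lambda\in\sigma(A+B)$ satisfies $\dist(\lambda,\sigma(B))\leq a$, which is exactly $\dist(\sigma(A+B),\sigma(B))\leq a$. Equivalently, $\sigma(A+B)$ is contained in the closed $a$-neighbourhood of $\sigma(B)$, which (since $\sigma(B)\subset\bbR$ is closed) is precisely $[-a,a]+\sigma(B)$.

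There is no real obstacle here: the entire argument is a clean combination of the resolvent norm identity from the spectral theorem with a Neumann-series perturbation. The only point requiring mild care is justifying the self-adjointness of $A+B$ in the unbounded case, which is why I would invoke Kato--Rellich up front rather than treat $A$ and $B$ symmetrically.
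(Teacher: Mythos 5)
Your proof is correct. The paper does not prove this proposition at all — it quotes it from Kato — and your argument (resolvent bound $\norm{(B-\lambda)^{-1}}=1/\dist(\lambda,\sigma(B))$ from the spectral theorem, the factorisation $A+B-\lambda=(I+A(B-\lambda)^{-1})(B-\lambda)$ on $D(B)$, and a Neumann series) is precisely the standard proof of Kato's Theorem V.4.10, so there is nothing to reconcile. Two minor remarks: in the paper's application both operators are bounded, so the Kato--Rellich step is not even needed there; and the degenerate case $a=0$ (where writing $1/\dist(\lambda,\sigma(B))<1/a$ is meaningless) is trivial since then $A=0$, so your strict-inequality step should be phrased as $\norm{A(B-\lambda)^{-1}}\leq a/\dist(\lambda,\sigma(B))<1$ or handled separately.
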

By adding a multiple of the identity operator to $A$, we obtain 
\[
\sigma(A+B)\subset\conv(\sigma(A))+\sigma(B).
\]
The analogue of this fact for multiplicative perturbations is 

\begin{theorem}\label{thm:add2}
Let $A$ and $B$ be bounded positive semi-definite operators on a Hilbert space. Then 
\begin{equation}
\sigma(\sqrt{B}A\sqrt{B})\subset\conv(\sigma(A))\cdot\sigma(B).
\label{eq:add1}
\end{equation}
\end{theorem}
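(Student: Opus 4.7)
The plan is to reduce the claimed multiplicative inclusion to the sharpened additive Kato bound already noted in the passage after Proposition~\ref{prp:cc1}, namely $\sigma(X+Y)\subset\conv(\sigma(X))+\sigma(Y)$. The bridge between the two is the factorisation
\[
\sqrt{B}A\sqrt{B}-\lambda I=\sqrt{B}(A-\lambda B^{-1})\sqrt{B},
\]
valid when $B$ is invertible; it converts a perturbation of the ``outside'' factor $\sqrt{B}\cdot\sqrt{B}$ into an additive perturbation of $A$.

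Under the invertibility assumption on $B$, $\sqrt{B}$ is also invertible, so the operator on the left is invertible if and only if $A-\lambda B^{-1}$ is. Applying the sharpened additive bound to $A+(-\lambda B^{-1})$ gives
\[
\sigma(A-\lambda B^{-1})\subset\conv(\sigma(A))+\sigma(-\lambda B^{-1})=\conv(\sigma(A))-\{\lambda/s:s\in\sigma(B)\}.
\]
Hence $A-\lambda B^{-1}$ is invertible whenever $\lambda/s\notin\conv(\sigma(A))$ for every $s\in\sigma(B)$, i.e.\ whenever $\lambda\notin\conv(\sigma(A))\cdot\sigma(B)$. Taking contrapositives yields \eqref{eq:add1} in the case of invertible $B$.

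To remove the invertibility assumption I would perturb $B\mapsto B_\eps:=B+\eps I$, which is strictly positive. Then $\sqrt{B_\eps}A\sqrt{B_\eps}\to\sqrt{B}A\sqrt{B}$ in operator norm by continuity of the functional calculus, and upper semicontinuity of the spectrum under norm convergence means every $\lambda\in\sigma(\sqrt{B}A\sqrt{B})$ is a limit of points $\lambda_\eps\in\sigma(\sqrt{B_\eps}A\sqrt{B_\eps})\subset\conv(\sigma(A))\cdot(\sigma(B)+\eps)$. Writing $\lambda_\eps=t_\eps s_\eps$ with $t_\eps\in\conv(\sigma(A))$ and $s_\eps-\eps\in\sigma(B)$, compactness of $\conv(\sigma(A))$ and closedness of $\sigma(B)$ let me extract a subsequential limit $\lambda=ts\in\conv(\sigma(A))\cdot\sigma(B)$, completing the proof. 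I do not anticipate any real obstacle: the single point requiring care is the asymmetry of the conclusion (convex hull only on $\sigma(A)$), which is exactly dictated by the asymmetric input from the additive Kato bound, and the perturbation $B\mapsto B+\eps I$ handles the kernel of $B$ with no extra work.
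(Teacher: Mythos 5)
Your argument is correct, but it takes a genuinely different route from the paper's. The paper handles the core (strictly positive) case by sandwiching $M=\sqrt{B}A\sqrt{B}$ between $a_-B$ and $a_+B$, using operator monotonicity of the logarithm to turn this multiplicative comparison into an additive one for $\log M$ and $\log B$, applying Proposition~\ref{prp:cc1}, and exponentiating via the spectral mapping theorem; it then removes the positive lower bounds on \emph{both} $A$ and $B$ by an $\eps$-shift. You instead use the factorisation $\sqrt{B}A\sqrt{B}-\lambda I=\sqrt{B}\,(A-\lambda B^{-1})\,\sqrt{B}$ for invertible $B$, reducing membership of $\lambda$ in the spectrum to $0\in\sigma(A-\lambda B^{-1})$, which the sharpened additive bound $\sigma(X+Y)\subset\conv(\sigma(X))+\sigma(Y)$ settles at once; this avoids the Loewner-theory input entirely and requires regularising only $B$, not $A$, so your route is the more elementary of the two, while the paper's logarithm trick is the more structural way of converting products into sums. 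Two small points to tighten. First, state explicitly that it suffices to consider real $\lambda$ (automatic since $\sqrt{B}A\sqrt{B}$ is self-adjoint and the right-hand side of \eqref{eq:add1} is real), because Proposition~\ref{prp:cc1} needs $-\lambda B^{-1}$ to be self-adjoint. Second, the principle you invoke in the limiting step is misnamed: upper semicontinuity of the spectrum goes the wrong way, and the statement ``every spectral point of the limit is a limit of spectral points of the approximants'' is false for general operators. What saves you is self-adjointness: by Proposition~\ref{prp:cc1} applied to the difference, every $\lambda\in\sigma(\sqrt{B}A\sqrt{B})$ lies within $\norm{\sqrt{B}A\sqrt{B}-\sqrt{B_\eps}A\sqrt{B_\eps}}\to0$ of $\sigma(\sqrt{B_\eps}A\sqrt{B_\eps})$ — which is exactly how the paper itself performs the same $\eps\to0$ step — and then your compactness extraction of $\lambda=ts$ with $t\in\conv(\sigma(A))$, $s\in\sigma(B)$ goes through as written.
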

This statement is probably known to experts, but despite our best efforts, we could not find its proof in existing literature. 
\begin{proof}
As a first step, assume that both $A$ and $B$ have positive lower bounds, i.e. $a_- I\leq A$ and $b_- I\leq B$ with $a_-$, $b_-$ positive  (here $I$ is the identity operator). 
Write $\conv(\sigma(A))=[a_-,a_+]$ for $0<a_-\leq a_+$. Since $a_- I\leq A\leq a_+ I$,  
denoting for brevity $M=\sqrt{B}A\sqrt{B}$, we find
\begin{equation}
0\leq a_- B\leq M\leq a_+ B.
\label{eq:add3}
\end{equation}
It is known that the function $x\mapsto \log x$ is operator-positive on $(0,\infty)$ (see e.g. \cite[Theorem~4.1]{Simon-Lowner}), i.e. if operators $X$ and $Y$ satisfy $0\leq X\leq Y$, then $\log X\leq \log Y$. Applying this to \eqref{eq:add3}, we find 
\[
\log a_-+\log B\leq \log M\leq \log a_++\log B,
\]
where both operators $\log M$ and $\log B$ are bounded by our assumptions. 
Subtracting the average of left- and right-hand sides, we find 
\[
\norm{\log M-\log B-\log \sqrt{a_-a_+}}\leq\Delta, \quad\text{ where }\Delta=\log \sqrt{a_+/a_-}. 
\]
Applying Proposition~\ref{prp:cc1}, we find that 
\[
\sigma(\log M)\subset [-\Delta,\Delta]+\sigma\bigl(\log (\sqrt{a_-a_+}B)\bigr).
\]
Exponentiating and using the spectral mapping theorem for self-adjoint operators, we find
\[
\sigma(M)
\subset [\sqrt{a_-/a_+},\sqrt{a_+/a_-}]\cdot\sigma(\sqrt{a_-a_+}B)
=[a_-,a_+]\cdot\sigma(B),
\]
as required. 

Now let us lift the assumption that $A$ and $B$ have positive lower bounds. For the operators $A_\eps=A+\eps I$ and $B_\eps=B+\eps I$ with $\eps>0$ we have 
\begin{equation}
\sigma(\sqrt{B_\eps}A_\eps\sqrt{B_\eps})\subset\conv(\sigma(A_\eps))\cdot\sigma(B_\eps)
\label{eq:add2}
\end{equation}
by the already proved step. Since
\[
\norm{\sqrt{B_\eps}A_\eps\sqrt{B_\eps}-\sqrt{B}A\sqrt{B}}\to0,
\quad \eps\to0,
\]
applying Proposition~\ref{prp:cc1} to \eqref{eq:add2}, we obtain \eqref{eq:add1}. 
\end{proof}

\subsection{Proof of Theorem~\ref{thm:aa2}}
We need to prove two inclusions in \eqref{eq:aa10}.

\smallskip

\emph{Proof of the first inclusion.}
Take some 
\[
\lambda_1\in\essran\varphi=\sigma(L_\varphi)
\quad\text{ and }\quad
\lambda_2\in\supp\bbP_0,
\]
let $\lambda=\lambda_1\lambda_2$ and let us prove that $\lambda\in\sigma(\calP_\varkappa)$. 
We use the realisation \eqref{eq:bb1}
\[
\calP_\varkappa =L_{\theta}\varkappa L_{\theta}
\] 
in $\ell^2(\bbZ)$. 
It suffices to construct a sequence $\{w_n\}_{n=1}^\infty$ of elements in $\ell^2(\bbZ)$ such that $\inf_n\norm{w_n}>0$ and 
\begin{equation}
\norm{\calP_{\varkappa}w_n-\lambda w_n}\to0, \quad n\to\infty
\label{eq:cc11}
\end{equation}
a.s. on $\Omega$. 
Since $L_\varphi=(L_{\theta})^2$, we have $\sqrt{\lambda_1}\in\sigma(L_{\theta})$. Let us select a sequence $v_n\in\ell^2(\bbZ)$ such that $0<c\leq\norm{v_n}\leq C$ and 
\[
\norm{L_{\theta} v_n-\sqrt{\lambda_1}v_n}\to0, \quad n\to\infty.
\]
Truncating $v_n$ if necessary, we can assume that each $v_n$ has compact support. We also note that, by the shift invariance of $L_{\theta}$, any shifts of $v_n$ satisfy the same property. 

Next, by a standard argument (see e.g. \cite[Theorem~3.12]{Aiz-War}) one can find intervals in $\bbZ$ of arbitrary length where $\varkappa$ is arbitrarily close to $\lambda_2$. More precisely: there is a set of $\varkappa\in\Omega$ of full measure such that for all $\varkappa$ in this set, for all $n\geq1$ there exists $j_n\in\bbZ$ such that 
\[
\sup_{i\in\supp v_n}\abs{\varkappa_{i+j_n}-\lambda_2}<1/n.
\]
Now denote 
\[
w_n(i)=v_n(i-j_n); 
\]
then $\norm{w_n}=\norm{v_n}$ and $w_n$ satisfy 
\[
\norm{L_{\theta} w_n-\sqrt{\lambda_1}w_n}\to0, \quad
\norm{\varkappa w_n-\lambda_2 w_n}\to0, \quad n\to\infty.
\]
From here we immediately obtain \eqref{eq:cc11}. 

\smallskip

\emph{Proof of the second inclusion.}
For a.e. $\varkappa$, we have $\sigma(\varkappa)=\supp\bbP_0$. We also have $\sigma(L_\varphi)=\essran\varphi$. Now recalling that $\calP_\varkappa=\sqrt{L_\varphi}\varkappa\sqrt{L_\varphi}$, the required result follows by Theorem~\ref{thm:add2}. The proof of Theorem~\ref{thm:aa2} is complete.
 \qed

\subsection{Proof of Theorem~\ref{thm:aa3}: the existence of $\nu_{\calP_\varkappa}$ and formula \eqref{eq:cc2}}
\label{sec:cc1}
Consider the family of operators ${\myQ}_\varkappa$ in $\ell^2(\bbZ)$, defined by \eqref{eq:bb2}. Let $U$ be the shift operator in $\ell^2(\bbZ)$, acting on elements of the canonical basis as 
\[
U\delta_n=\delta_{n-1}. 
\]
Since the Laurent operator $L_\varphi$ commutes with $U$, we find 
\[
{\myQ}_{T\varkappa}=U{\myQ}_{\varkappa}U^*
\]
and so ${\myQ}_{\varkappa}$ is an ergodic operator in $\ell^2(\bbZ)$, see Appendix~\ref{sec:A2}. Following the general definition outlined there (see \eqref{eq:A3}), we define the IDS measure $\nu_{{\myQ}_\varkappa}$ of ${\myQ}_\varkappa$ by 
\begin{equation}
\nu_{{\myQ}_\varkappa}(\Delta)=\bbE\{\jap{\chi_\Delta({\myQ}_\varkappa)\delta_0,\delta_0}\},
\label{eq:cc0}
\end{equation}
where $\Delta\subset\bbR$ is any Borel set. The support of $\nu_{{\myQ}_\varkappa}$ coincides with the deterministic spectrum of ${\myQ}_\varkappa$. Since ${\calP}_\varkappa$ and ${\myQ}_\varkappa$ are unitarily equivalent (see Remark~\ref{rmk:bb1}), it follows that the support of $\nu_{{\myQ}_\varkappa}$ coincides with the deterministic spectrum of $\calP_\varkappa$. 

It remains to prove that $\nu_{\calP_\varkappa}$ in the right-hand side of \eqref{eq:cc2} coincides with $\nu_{{\myQ}_\varkappa}$. Applying Proposition~\ref{prp:A2}, we get
\begin{equation}
\int_{0}^\infty f(\lambda)\dd\nu_{{\myQ}_\varkappa}(\lambda)=
\lim_{N\to\infty}
\frac1{2N+1}\Tr f({\myQ}_\varkappa^{(N)})
\label{eq:cc13}
\end{equation}
for any continuous function $f$, where ${\myQ}_\varkappa^{(N)}$ is the $(2N+1) \times (2N+1)$ matrix
\[
{\myQ}_\varkappa^{(N)}
=\{\jap{{\myQ}_\varkappa\delta_n,\delta_m}\}_{n,m=-N}^N
=\{\sqrt{\varkappa_n}\widehat{\varphi}_{n-m}\sqrt{\varkappa_m}\}_{n,m=-N}^N
\]
We need to replace ${\myQ}_\varkappa^{(N)}$ by $\calP_\varkappa^{(N)}|_{\Ran \calP_\varkappa^{(N)}}$ in the right-hand side of \eqref{eq:cc13}. Let us check that 
\begin{equation}
\Tr f(\calP_\varkappa^{(N)}|_{\Ran \calP_\varkappa^{(N)}})=\Tr f({\myQ}_\varkappa^{(N)})
\label{eq:cc12}
\end{equation}
for any continuous function $f$. If $f=\mathrm{const}$, then \eqref{eq:cc12} clearly holds. Thus, it suffices to prove this relation for any continuous $f$ vanishing at zero, in which case the restriction onto the range of $\calP_\varkappa^{(N)}$ is not necessary. We denote by $1_N$ the orthogonal projection in $\ell^2(\bbZ)$ onto the subspace spanned by $\delta_n$, $-N\leq n\leq N$. Modifying \eqref{eq:bb1}, we can write
\[
\calP_\varkappa^{(N)}=L_{\theta}\varkappa 1_N L_{\theta}
\]
(recall that $(L_{\theta})^2=L_\varphi$).
For any $m\geq1$ we have 
\begin{align*}
(\calP_\varkappa^{(N)})^m
&=(L_{\theta}\sqrt{\varkappa}1_N\sqrt{\varkappa}L_{\theta})^m
=L_{\theta}\sqrt{\varkappa}(1_N\sqrt{\varkappa}L_\varphi\sqrt{\varkappa}1_N)^{m-1}\sqrt{\varkappa}L_{\theta}
\\
&=L_{\theta}\sqrt{\varkappa}(1_N{\myQ}_\varkappa 1_N)^{m-1}\sqrt{\varkappa}L_{\theta}
\end{align*}
and therefore, by the cyclicity of trace, 
\[
\Tr
(\calP_\varkappa^{(N)})^m
=
\Tr L_{\theta}\sqrt{\varkappa}(1_N{\myQ}_\varkappa 1_N)^{m-1}\sqrt{\varkappa}L_{\theta}
=
\Tr(1_N{\myQ}_\varkappa 1_N)^{m}
=
\Tr({\myQ}_\varkappa^{(N)})^{m}.
\]
From here we obtain \eqref{eq:cc12} for any polynomial $f$ with $f(0)=0$. The proof is finished by the application of the Weierstrass approximation theorem. 

In view of the above, we will henceforth  write
\[
\nu_{{\myQ}_\varkappa}=\nu_{\calP_\varkappa}=\nu.
\]

\subsection{The moments of $\nu$: proof of \eqref{eq:cc2a} and \eqref{eq:cc3}}\label{sec:cc2}
For the first moment of $\nu$, we have by the law of large numbers for $\varkappa$ 
\[
\int_0^\infty \lambda\dd\nu(\lambda)=
\lim_{N\to\infty}\frac1{2N+1}\Tr \calP_\varkappa^{(N)}
=\lim_{N\to\infty}\frac1{2N+1}\sum_{n=-N}^N \varkappa_n
=\bbE\{\varkappa_0\}.
\]
For the second moment we have, in view of \eqref{eq:ppg}
\begin{align*}
\int_0^\infty \lambda^2\dd\nu(\lambda)
=\bbE\{\jap{({\myQ}_\varkappa)^2\delta_0,\delta_0}\}
=\bbE\left\{\sum_{m\in\bbZ}\varkappa_0\varkappa_m\abs{\widehat{\varphi}_m}^2\right\}.
\end{align*}
Separating the term with $m=0$, recalling that $\widehat{\varphi}_0=1$ and using the independence of $\varkappa_0$ and $\varkappa_m$ with $m\not=0$, we find  
\begin{align*}
\bbE\left\{\sum_{m\in\bbZ}\varkappa_0\varkappa_m\abs{\widehat{\varphi}_m}^2\right\}
&=
\bbE\{\varkappa_0^2\}
+
\bbE\left\{\sum_{m\not=0}\varkappa_0\varkappa_m\abs{\widehat{\varphi}_m}^2\right\}
\\
&=
\bbE\{\varkappa_0^2\}
+
(\bbE\{\varkappa_0\})^2
\sum_{m\not=0}\abs{\widehat{\varphi}_m}^2
\\
&=
\bbV\{\varkappa_0\}+(\bbE\{\varkappa_0\})^2
\sum_{m}\abs{\widehat{\varphi}_m}^2,
\end{align*}
as claimed. 
The proof of Theorem~\ref{thm:aa3} is complete. \qed

\section{Wegner estimate: Proof of Theorem~\ref{thm:aa4}}
\label{sec:e}

We use both realisations introduced above
\[
\calP_\varkappa =L_{\theta}\varkappa L_{\theta},
\quad
{\myQ}_\varkappa=\sqrt{\varkappa}L_\varphi\sqrt{\varkappa}, \quad  \varphi=\theta^2
\]
in $\ell^2(\bbZ)$. 
For any $\varkappa\in\Omega$, $n\in\bbZ$, and a Borel set $\Delta\subset\bbR$, let us denote 
\[
\nu_n(\Delta;\varkappa)=\jap{\chi_{\Delta}(\calP_\varkappa)\psi_n,\psi_n},
\quad
\widetilde{\nu}_n(\Delta;\varkappa)=\jap{\chi_{\Delta}({\myQ}_\varkappa)\delta_n,\delta_n};
\]
we recall that $\psi_n=L_{\theta}\delta_n$. We also recall (see \eqref{eq:cc0}) that the IDS measure $\nu_{\calP_\varkappa}=\nu$ is 
\[
\nu(\Delta)=\bbE\{\widetilde{\nu}_0(\Delta;\varkappa)\}, 
\quad 
\widetilde{\nu}_0(\Delta;\varkappa)=\jap{\chi_\Delta(\myQ_\varkappa)\delta_0,\delta_0}.
\]
Let us establish the relation between the measures $\nu_n$ and $\widetilde{\nu}_n$. 
\begin{lemma}\label{lma:e3}
For any $\varkappa\in\Omega$, $n\in\bbZ$, we have
\[
\lambda\dd\widetilde{\nu}_n(\lambda;\varkappa)
=
\varkappa_n
\dd\nu_n(\lambda;\varkappa).
\]
\end{lemma}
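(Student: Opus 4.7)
The plan is to factor both operators through a single bounded operator and use a Borel functional calculus intertwining. Set $X = \sqrt{\varkappa}\,L_{\theta}$ on $\ell^2(\bbZ)$. Then \eqref{eq:bb1} and \eqref{eq:bb2} give
\[
\calP_\varkappa = L_{\theta}\varkappa L_{\theta} = X^*X,\qquad \myQ_\varkappa = \sqrt{\varkappa}\,L_\varphi\sqrt{\varkappa} = \sqrt{\varkappa}\,L_{\theta}^2\sqrt{\varkappa} = XX^*.
\]
Since $\varkappa$ is the multiplication operator by $\{\varkappa_n\}$, one also has
\[
X^*\delta_n = L_{\theta}\sqrt{\varkappa}\,\delta_n = \sqrt{\varkappa_n}\,L_{\theta}\delta_n = \sqrt{\varkappa_n}\,\psi_n,
\]
which is what will produce the factor $\varkappa_n$ in the claimed identity.

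Next, I would invoke the intertwining of the Borel functional calculi of $X^*X$ and $XX^*$: for every bounded Borel function $g$ on $[0,\infty)$,
\[
XX^*\,g(XX^*) \;=\; X\,g(X^*X)\,X^*.
\]
This is immediate for polynomials, from the identity $(XX^*)^{k+1} = X(X^*X)^k X^*$, and extends to bounded Borel $g$ by Weierstrass / dominated-convergence approximation in the spectral calculus. Testing the desired measure identity against an arbitrary bounded Borel $g$, I would compute
\begin{align*}
\int_0^\infty \lambda g(\lambda)\,\dd\widetilde{\nu}_n(\lambda;\varkappa)
&= \jap{\myQ_\varkappa\, g(\myQ_\varkappa)\delta_n,\delta_n}
= \jap{X\, g(\calP_\varkappa)\,X^*\delta_n,\delta_n} \\
&= \jap{g(\calP_\varkappa)X^*\delta_n,\,X^*\delta_n}
= \varkappa_n\,\jap{g(\calP_\varkappa)\psi_n,\psi_n} \\
&= \varkappa_n\int_0^\infty g(\lambda)\,\dd\nu_n(\lambda;\varkappa),
\end{align*}
which is precisely the weak form of the claimed identity on $(0,\infty)$.

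To promote this to equality of measures on $[0,\infty)$ it remains to rule out an atom at $\lambda=0$. The left-hand side obviously carries none, since the density $\lambda$ kills any mass at zero. For the right-hand side, $\varkappa_n\geq\kappamin>0$ together with $L_\theta\geq0$ yields $\Ker\calP_\varkappa=\Ker(L_\theta\varkappa L_\theta)=\Ker L_\theta$; combined with $\psi_n = L_{\theta}\delta_n\in\Ran L_{\theta}\subset(\Ker L_{\theta})^\perp$ (by self-adjointness of $L_\theta$), this gives $\nu_n(\{0\};\varkappa)=0$. The only step requiring genuine care is the extension of the intertwining identity from polynomials to bounded Borel $g$; once that standard lemma is in hand, the rest is the direct computation displayed above.
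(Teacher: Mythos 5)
Your proposal is correct and is essentially the paper's own argument: the intertwining $XX^*\,g(XX^*)=X\,g(X^*X)\,X^*$ with $X=\sqrt{\varkappa}L_\theta$ is exactly the identity ${\myQ}_\varkappa^{m+1}=\sqrt{\varkappa}L_\theta\calP_\varkappa^m L_\theta\sqrt{\varkappa}$ used in the paper, and the evaluation $X^*\delta_n=\sqrt{\varkappa_n}\psi_n$ is the same computation. The only cosmetic difference is that the paper tests against monomials and relies on moment determinacy of compactly supported measures (which also settles the possible atom at $0$ via the $m=0$ case), whereas you test against bounded Borel $g$ — in which case taking $g=\chi_{\{0\}}$ already yields $\nu_n(\{0\};\varkappa)=0$, so your separate kernel argument, while correct, is not needed.
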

\begin{proof}
It suffices to prove that for all $m\geq0$, 
\[
\int_0^\infty \lambda^{m+1}\dd\widetilde{\nu}_n(\lambda;\varkappa)
=
\varkappa_n\int_0^\infty \lambda^m\dd\nu_n(\lambda;\varkappa).
\]
Similarly to the argument of Section~\ref{sec:cc1}, 
\begin{align}
{\myQ}_\varkappa^{m+1}
&=
(\sqrt{\varkappa}L_\varphi\sqrt{\varkappa})^{m+1}
=
\sqrt{\varkappa}L_\theta(L_\theta\varkappa L_\theta)^mL_\theta\sqrt{\varkappa}
\notag
\\
&=
\sqrt{\varkappa}L_{\theta}\calP_\varkappa^m L_{\theta}\sqrt{\varkappa}
\label{eq:e9}
\end{align}
and therefore
\begin{align*}
\int_0^\infty \lambda^{m+1}\dd\widetilde{\nu}_n(\lambda;\varkappa)
&=
\jap{{\myQ}_\varkappa^{m+1}\delta_n,\delta_n}
=
\jap{ \calP_\varkappa^m L_{\theta} \sqrt{\varkappa}\delta_n,L_{\theta}\sqrt{\varkappa}\delta_n}
\\
&=
\varkappa_n\jap{\calP_\varkappa^m L_{\theta} \delta_n,L_{\theta} \delta_n}
=
\varkappa_n\jap{\calP_\varkappa^m \psi_n,\psi_n}
\\
&=
\varkappa_n\int_0^\infty \lambda^{m}\dd\nu_n(\lambda;\varkappa),
\end{align*}
as required.
\end{proof}
Next, as usual in the arguments of this genre, one of the key components of the proof is the statement known as \emph{spectral averaging}, see \cite{Si-Wo:86} and references therein for the history.

\begin{proposition}\label{prp:e2}
Let $A$ be a bounded self-adjoint operator in a Hilbert space, and let $x$ be a non-zero vector in this space. For $\alpha\in\bbR$, denote $A_\alpha=A+\alpha\jap{\bigcdot,x}x$ and  consider the measure
\[
\mu_\alpha(\Delta)=\jap{\chi_{\Delta}(A_\alpha)x,x}
\]
on the real line. Then 
\begin{equation}
\int_{-\infty}^\infty \mu_\alpha(\Delta)\dd\alpha=m(\Delta),
\label{eq:e2}
\end{equation}
where the right-hand side is the Lebesgue measure of $\Delta$. 
\end{proposition}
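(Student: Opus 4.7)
The plan is to use the Aronszajn--Krein theory of rank-one perturbations together with the Poisson representation of positive Borel measures. Let $F_\alpha(z) = \jap{(A_\alpha - z)^{-1} x, x}$ denote the Borel (Cauchy) transform of $\mu_\alpha$; this is a Herglotz function on the upper half-plane. Starting from the second resolvent identity for $A_\alpha = A + \alpha \jap{\bigcdot, x} x$ and taking the inner product with $x$ yields the standard rank-one perturbation identity
\[
F_\alpha(z) = \frac{F_0(z)}{1 + \alpha F_0(z)}, \qquad \Im z \neq 0.
\]

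Next, I would compute $\Im F_\alpha(E + \ii \eps)$ explicitly. Writing $F_0(E + \ii \eps) = a + \ii b$ with $b = \Im F_0(E + \ii \eps) > 0$, a short algebraic manipulation gives
\[
\Im F_\alpha(E + \ii \eps) = \frac{b}{(1 + \alpha a)^2 + \alpha^2 b^2}.
\]
The integral over $\alpha \in \bbR$ is elementary: completing the square in the denominator as $(a^2+b^2)\bigl(\alpha + a/(a^2+b^2)\bigr)^2 + b^2/(a^2+b^2)$ and applying $\int_{-\infty}^{\infty} (\xi^2 + \beta^2)^{-1} d\xi = \pi / \beta$ gives
\[
\int_{-\infty}^\infty \Im F_\alpha(E + \ii \eps) \, d\alpha = \pi,
\]
independently of $E \in \bbR$ and $\eps > 0$.

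Third, I would combine this with the Poisson representation
\[
\Im F_\alpha(E + \ii \eps) = \int_{-\infty}^\infty \frac{\eps}{(E - \lambda)^2 + \eps^2} \, d\mu_\alpha(\lambda),
\]
and set $\nu(\Delta) := \int_{-\infty}^\infty \mu_\alpha(\Delta) \, d\alpha$ (initially taking values in $[0, \infty]$). Tonelli's theorem applies since $\mu_\alpha \geq 0$ and the Poisson kernel is positive, yielding
\[
\int_{-\infty}^\infty \frac{\eps}{(E - \lambda)^2 + \eps^2} \, d\nu(\lambda) = \pi
\]
for every $E \in \bbR$ and $\eps > 0$. Since the same identity holds with $\nu$ replaced by Lebesgue measure $m$, and since a locally finite positive Borel measure on $\bbR$ is uniquely determined by its Poisson integrals, we conclude $\nu = m$, which is precisely \eqref{eq:e2}.

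The step I would pay most attention to is the passage from the equality of Poisson integrals to the equality $\nu = m$: one needs to first ensure that $\nu$ is locally finite (which follows a posteriori from the finiteness of the Poisson integral for each $\eps$) and then invoke the Stieltjes inversion formula, i.e.\ the fact that the Poisson kernel acts as an approximate identity as $\eps \to 0_+$. Everything else — the resolvent identity, the algebraic computation of $\Im F_\alpha$, and the elementary integral over $\alpha$ — is routine.
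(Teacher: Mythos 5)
Your proof is correct. Note that the paper does not prove Proposition~\ref{prp:e2} at all: it is quoted as the standard spectral averaging result with a reference to Simon--Wolff \cite{Si-Wo:86}, and the argument you give -- the Aronszajn--Krein identity $F_\alpha=F_0/(1+\alpha F_0)$, the computation $\Im F_\alpha(E+\ii\eps)=b/((1+\alpha a)^2+\alpha^2b^2)$ whose $\alpha$-integral is exactly $\pi$, and then Tonelli plus uniqueness of the measure in the Poisson (Herglotz) representation -- is precisely the standard proof found in that literature, so you have supplied what the paper delegates to a citation. Two small points you should make explicit if you write this up: (i) to define $\nu(\Delta)=\int\mu_\alpha(\Delta)\,\dd\alpha$ and apply Tonelli you need Borel measurability of $\alpha\mapsto\mu_\alpha(\Delta)$, which follows from continuity of $\alpha\mapsto F_\alpha(z)$ together with Stieltjes inversion and a monotone class argument; (ii) $b=\Im F_0(E+\ii\eps)>0$ because $x\neq0$, which is what makes the $\alpha$-integral equal to $\pi$ rather than $0$. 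Your handling of local finiteness of $\nu$ (from the finiteness of its Poisson integral) and of the passage from equal Poisson integrals to $\nu=m$ is fine; note also that the paper only needs the identity in the weak sense stated after the proposition, so your measure-theoretic version is if anything slightly stronger.
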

We note that \eqref{eq:e2} should be understood in the weak sense, i.e. 
\[
\int_{-\infty}^\infty \int_{-\infty}^\infty f(\lambda)\dd\mu_\alpha(\lambda)\, \dd\alpha=\int_{-\infty}^\infty f(\lambda)\dd \lambda
\]
for a suitable class of functions $f$.

We are now ready to prove Theorem~\ref{thm:aa4} (the Wegner estimate).
\begin{proof}[Proof of Theorem~\ref{thm:aa4}]
By our assumptions, we have 
\[
\dd\bbP_0(\lambda)=\rho(\lambda)\dd\lambda, 
\]
where $\supp \rho\subset[\varkappa_{\min},\varkappa_{\max}]$ and $\norm{\rho}_\infty=\rho_{\max}<\infty$.

We write formally the sequence $\varkappa$ as a pair $(\varkappa_0,\varkappa_{\not=0})$, where $\varkappa_{\not=0}$ denotes the sequence with $n\not=0$. Then for any interval $\Delta$, taking expectation with respect to $\varkappa_0$ first, we find
\begin{align*}
\int_{\varkappa_{\min}}^{\varkappa_{\max}}\nu_0(\Delta;\varkappa_0,\varkappa_{\not=0})\rho(\varkappa_0)\dd\varkappa_0
\leq
\rho_{\max}\int_{-\infty}^\infty \nu_0(\Delta;\varkappa_0,\varkappa_{\not=0})\dd\varkappa_0
=\rho_{\max} m(\Delta),
\end{align*}
where we have used the spectral averaging formula \eqref{eq:e2} at the last step and $m(\Delta)$ is the Lebesgue measure of $\Delta$. Taking expectation with respect to $\varkappa_{\not=0}$ as well, we obtain 
\begin{equation}
\bbE\{\nu_0(\Delta;\varkappa)\}\leq \rho_{\max} m(\Delta).
\label{eq:e7}
\end{equation}
Finally, let us take $n=0$ in Lemma~\ref{lma:e3} and integrate over $\Delta$; we obtain
\begin{equation}
\widetilde{\nu}_0(\Delta;\varkappa)=\varkappa_0\int_\Delta\frac{\dd\nu_0(\lambda;\varkappa)}{\lambda}.
\label{eq:e8}
\end{equation}
Now using \eqref{eq:e7} and \eqref{eq:e8}, we can complete:
\begin{align*}
\nu(\Delta)
&=\bbE\{\widetilde{\nu}_0(\Delta;\varkappa)\}
=\bbE\left\{ \varkappa_0\int_\Delta\frac{\dd\nu_0(\lambda;\varkappa)}{\lambda}\right\}
\\
&\leq\varkappa_{\max}\bbE\left\{ \int_\Delta\frac{\dd\nu_0(\lambda;\varkappa)}{\lambda}\right\}
\leq \varkappa_{\max}\rho_{\max}\int_\Delta\frac{\dd\lambda}{\lambda}.
\end{align*}
The proof of Theorem~\ref{thm:aa4} is complete.
\end{proof}

\section{Point masses of $\nu$: proof of Theorem~\ref{thm:aa4a}}
\label{sec.dd} 

\subsection{The theorem of Beurling and Malliavin}
The proof is based on a deep theorem due to Beurling and Malliavin \cite{Be-Ma} on completeness of exponentials. 

We need a few definitions. For a discrete set $\Lambda\subset\bbR$ (i.e. $\Lambda$ has no finite points of accumulation) we define the counting function of its positive part by 
\[
n_\Lambda(t)=\#(\Lambda\cap(0,t)), \quad t>0,
\]
where $\#$ denotes the number of elements in a set. In the same way, $n_{-\Lambda}(t)$ is the counting function of the negative part of $\Lambda$. 
Following \cite{Kor}, for a discrete set $\Lambda\subset\bbR$ and $a>0$ we will say that \emph{$\Lambda$ has Kahane density $a$}, $D_{\rm Ka}(\Lambda)=a$, if 
\[
\int_1^\infty (\abs{n_\Lambda(t)-at}+\abs{n_{-\Lambda}(t)-at})\frac{\dd t}{t^2}<\infty.
\]
One can also consider sets with different densities of positive and negative parts, but we do not need this. 
For a discrete set $\Lambda\subset\bbR$ and for $R>0$ one says that the \emph{completeness radius of $\Lambda$ is $R$}, if 
\[
R=\sup\bigl\{r>0: \text{the linear span of } \{\ee^{\ii\lambda t}\}_{\lambda\in\Lambda} \text{ is dense in $L^2(-r,r)$.}\bigr\}
\]
We state the simplified version of the Beurling--Malliavin theorem, sufficient for our purposes. 
\begin{proposition}\cite{Be-Ma,Kahane}
\label{prp:dd1}
Suppose a discrete set $\Lambda\subset\bbR$ has Kahane density $a>0$. Then the completeness radius of $\Lambda$ is $\pi a$. 
\end{proposition}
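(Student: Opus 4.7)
The plan is to reformulate the completeness question by Paley--Wiener duality and then invoke the classical theory of entire functions of exponential type. A test function $g \in L^2(-r,r)$ is orthogonal to $\ee^{\ii \lambda t}$ for every $\lambda \in \Lambda$ precisely when its Fourier transform $F(z) = \int_{-r}^r g(t) \ee^{-\ii z t}\, \dd t$ vanishes on $\Lambda$; and $F$ is an entire function of exponential type at most $r$ that is square-integrable on $\bbR$, i.e.\ an element of the Paley--Wiener space $PW_r$. Completeness of $\{\ee^{\ii \lambda t}\}_{\lambda \in \Lambda}$ in $L^2(-r,r)$ is therefore equivalent to the non-existence of a nonzero $F \in PW_r$ with $F|_\Lambda = 0$. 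Hence the completeness radius $R(\Lambda)$ equals the infimum of those $r$ admitting such an $F$, and the theorem reduces to showing this infimum equals $\pi a$.

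For the lower bound $R(\Lambda) \geq \pi a$ (non-existence of $F$ for small $r$), I would argue by contradiction: suppose $F \in PW_r$ is nonzero, vanishes on $\Lambda$, and $r < \pi a$. Applying the Poisson--Jensen formula to $F$ in a large disc, the zero-counting function $n(t,F) \geq n_\Lambda(t) + n_{-\Lambda}(t)$ forces a lower bound on the Nevanlinna characteristic of the form $T(R,F) \gtrsim 2a R$, while the Paley--Wiener growth condition $|F(\ii y)| \leq C \ee^{r|y|}$ combined with logarithmic integrability $\int \log^+|F(t)|/(1+t^2)\, \dd t < \infty$ forces $T(R,F) \leq (r/\pi) R + o(R)$. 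The Kahane hypothesis is precisely the integrability condition that converts the zero-counting discrepancy $n_\Lambda(t) - at$ into a convergent contribution in the Carleman formula, closing the argument and yielding $r \geq \pi a$.

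The upper bound $R(\Lambda) \leq \pi a$ (existence of $F$ for large $r$) is the deep direction and requires the Beurling--Malliavin machinery itself. The natural candidate is a genus-one canonical product $\Pi(z) = \prod_{\lambda \in \Lambda}(1 - z/\lambda)\ee^{z/\lambda}$, which under the Kahane hypothesis converges to an entire function of exponential type exactly $\pi a$ vanishing on $\Lambda$; this is a Lindel\"of-type computation using the tail estimate on $n_\Lambda(t) - at$. However, $\Pi$ a priori need not lie in $PW_r$ for any $r$: its modulus on $\bbR$ may be wildly unbounded. This is where the \emph{Beurling--Malliavin multiplier theorem} enters, supplying, for each $\eps > 0$, a nonzero entire function $\Phi$ of exponential type at most $\eps$ such that $\Phi \cdot \Pi \in L^2(\bbR)$. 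Then $F := \Phi \cdot \Pi \in PW_{\pi a + \eps}$ vanishes on $\Lambda$, exhibiting non-completeness for every $r > \pi a$.

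The main obstacle is precisely the multiplier theorem: constructing a nonzero entire function of arbitrarily small exponential type that tames the real-axis growth of $\Pi$ while preserving its zero set. Its proof rests on the theory of functions of bounded type in a half-plane, logarithmic integrability criteria for admissible majorants, and a delicate \emph{atomization} argument decomposing a given majorant $\log|1/\Pi|$ into pieces each handled by an explicit Blaschke-type outer factor. This is the step I would not attempt to reprove; instead I would cite \cite{Be-Ma} and \cite{Kahane} (or the modern exposition in Koosis's monograph). The reduction from Beurling--Malliavin's original effective density to Kahane's stronger density is comparatively routine: the integrability condition defining Kahane density implies the one-sided tail bounds required for the Beurling--Malliavin density to agree with $a$.
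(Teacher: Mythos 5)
The first thing to note is that the paper does not prove this proposition at all: it is stated as a quoted classical result of Beurling--Malliavin (in the simplified form due to Kahane), with \cite[Theorem 71]{Redheffer} given as the reference, so there is no internal argument to compare yours against. Your proposal is a sensible outline of how the result is established in that literature -- Paley--Wiener duality reducing completeness in $L^2(-r,r)$ to the non-existence of a nonzero zero-divisor in $PW_r$, a Jensen/Levinson-type density count for the bound $R(\Lambda)\geq\pi a$, and the canonical product combined with the Beurling--Malliavin multiplier theorem for $R(\Lambda)\leq\pi a$ -- but, exactly like the paper, it ultimately cites the deep step rather than proving it, so it should be regarded as a reduction to the cited sources rather than an independent proof.

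Two technical points in your sketch deserve correction or explicit acknowledgement. First, for an entire function of exponential type $r$ in the Cartwright class the Nevanlinna characteristic satisfies $T(R,F)\leq \tfrac{2r}{\pi}R+o(R)$, not $(r/\pi)R+o(R)$; as written, your two displayed estimates would yield $r\geq 2\pi a$, which is false (take $\Lambda=\bbZ$ and $F(z)=\sin(\pi z)/(z\,)$, of type $\pi$). With the correct constant, Jensen's comparison $2aR+o(R)\leq T(R,F)+O(1)\leq\tfrac{2r}{\pi}R+o(R)$ gives exactly $r\geq\pi a$; alternatively one can run Carleman's formula in a half-plane and count each side of $\Lambda$ separately. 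Second, the multiplier theorem is not applicable to an arbitrary entire function of exponential type: one must first verify that the canonical product $\Pi$ lies in the Cartwright class, i.e.\ that $\int_{\bbR}\log^+\abs{\Pi(x)}(1+x^2)^{-1}\dd x<\infty$. Under the Kahane hypothesis this is true (and the same hypothesis also gives the Lindel\"of condition ensuring that $\Pi$ has type exactly $\pi a$), but it is a genuine step, not a consequence of the type computation alone, and your sketch passes over it silently. Since you defer the multiplier theorem to \cite{Be-Ma,Kahane} in any case, the cleanest fix is to defer this verification to the same sources -- which is, in effect, what the paper itself does by quoting the proposition wholesale.
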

This statement can be found in the survey \cite[Theorem 71]{Redheffer}, and the history and background are also described in \cite{Redheffer}. The full version of the Beurling--Malliavin theorem in \cite{Be-Ma} determines the completeness radius of \emph{any} discrete set and uses a more complicated notion of density.

\subsection{A probabilistic lemma}

\begin{lemma}\label{lma:dd2}
Let $\omega=\{\omega_n\}_{n\in\bbZ}$ be a sequence of Bernoulli random variables, taking value $1$ with probability $q\in(0,1)$ and value $0$ with probability $1-q$. Then almost surely $D_{\rm Ka}(\supp\omega)=q$. 
\end{lemma}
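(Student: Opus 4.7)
The plan is to verify the Kahane density condition directly by bounding the expected value of the defining integral. I would write $\Lambda = \supp\omega$ and note that, for $t>0$, the counting function of the positive part is
\[
n_\Lambda(t) = \sum_{n=1}^{\lfloor t \rfloor} \omega_n,
\]
which is a sum of $\lfloor t \rfloor$ i.i.d.\ Bernoulli$(q)$ random variables, with mean $q\lfloor t\rfloor$ and variance $q(1-q)\lfloor t \rfloor$.

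The key estimate I would establish is $\bbE\abs{n_\Lambda(t) - qt} = O(\sqrt{t})$ as $t\to\infty$. Writing $\abs{n_\Lambda(t) - qt} \leq \abs{n_\Lambda(t) - q\lfloor t \rfloor} + q$ and applying Cauchy--Schwarz to the centred sum yields
\[
\bbE\abs{n_\Lambda(t)-q\lfloor t\rfloor}\leq \bigl(\bbE(n_\Lambda(t)-q\lfloor t\rfloor)^2\bigr)^{1/2}=\sqrt{q(1-q)\lfloor t \rfloor}\leq C\sqrt{t}
\]
for $t\geq 1$. I would then invoke Tonelli's theorem to swap expectation and integral, obtaining
\[
\bbE\int_1^\infty \abs{n_\Lambda(t)-qt}\frac{\dd t}{t^2} \leq \int_1^\infty \frac{C\sqrt{t}+q}{t^2}\,\dd t < \infty.
\]
This forces the integrand, viewed as a function of $\omega$, to be almost surely integrable with respect to $\dd t/t^2$. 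The negative part has counting function $n_{-\Lambda}(t) = \sum_{n=1}^{\lfloor t \rfloor}\omega_{-n}$, which has the same distribution as $n_\Lambda(t)$ by the i.i.d.\ assumption, so the identical estimate applies. Adding the two yields $D_{\rm Ka}(\supp\omega)=q$ almost surely.

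I do not expect a serious obstacle here: the $\sqrt{t}$ fluctuation bound coming from the Bernoulli variance is comfortably integrable against $\dd t/t^2$, and the Kahane density condition is precisely the requirement that the centred counting function be integrable against this measure. The only mildly delicate point is handling the discrepancy between $qt$ and $q\lfloor t\rfloor$, which contributes at most a bounded error that is absorbed into the constant. A sharper tool such as the law of the iterated logarithm would give an almost-sure pointwise bound of order $\sqrt{t\log\log t}$, but this is unnecessary since the expectation argument via Tonelli already delivers the result with less effort.
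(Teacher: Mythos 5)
Your proposal is correct and follows essentially the same route as the paper's proof: compute the mean and variance of the Bernoulli sum defining $n_\Lambda(t)$, use Cauchy--Schwarz to get the $O(\sqrt t)$ bound on $\bbE\abs{n_\Lambda(t)-qt}$, apply Tonelli to conclude the defining integral has finite expectation and is therefore almost surely finite, and treat the negative part symmetrically. Your handling of the $qt$ versus $q\lfloor t\rfloor$ discrepancy is slightly more explicit than the paper's, but the argument is the same.
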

\begin{proof}
The density of the positive part of $\supp\omega$ is  
\[
n_{\supp\omega}(t)=\sum_{0<n<t}\omega_n,
\]
and so, by a simple calculation, for any $t>0$, 
\begin{align*}
\bbE\{n_{\supp\omega}(t)\}&=q(N-1),
\\
\bbE\{(n_{\supp\omega}(t)-q(N-1))^2\}&=q(1-q)(N-1),
\end{align*}
where $N$ is the largest integer such that $N<t$.  It follows that 
\[
\bbE\{\abs{n_{\supp\omega}(t)-q(N-1)}\}\leq \sqrt{q(1-q)t}.
\]
From here we find
\[
\bbE\left\{\int_1^\infty\abs{n_{\supp\omega}(t)-qt}\frac{\dd t}{t^2}\right\}
=
\int_1^\infty O(\sqrt{t})\frac{\dd t}{t^2}<\infty.
\]
Thus, almost surely we have 
\[
\int_1^\infty\abs{n_{\supp\omega}(t)-qt}\frac{\dd t}{t^2}<\infty.
\]
In the same way one considers the integral corresponding to the negative part of $\supp\omega$. We obtain that $D_{\rm Ka}(\supp\omega)=q$ almost surely, as required. 
\end{proof}

\subsection{Proof of Theorem~\ref{thm:aa4a}}
\emph{Part (i):}
We assume without loss of generality that $\varphi(k)=\varphi_{\max}$ on the arc $\abs{k}\leq a$. We denote $p=\bbP_0(\{\varkappa_{\max}\})$; by assumption \eqref{eq:aa17}, $p$ is positive. Let us denote  $\omega_n=1-\chi_{\{\varkappa_{\max}\}}(\varkappa_n)$; thus $\omega_n$ is a sequence of Bernoulli random variables taking values $1$ with probability $1-p$ and $0$ with probability $p$. By Lemma~\ref{lma:dd2} (with $q=1-p$), we have $D_{\rm Ka}(\supp\omega)=1-p$ almost surely. Let us fix a sequence $\omega$ with this property. By Proposition~\ref{prp:dd1} and the assumption $\frac{a}{\pi}>1-p$, the set of exponentials $\{\ee^{\ii nk}\}_{n\in\supp\omega}$ is \emph{incomplete} in $L^2(-a,a)$. This means that there exists a non-zero function $f\in L^2(-a,a)$ such that 
\begin{equation}
\int_{-a}^a f(k)\ee^{-\ii nk}\dd k=0, \quad \forall n\in\supp\omega.
\label{eq:dd2}
\end{equation}
Let $g$ be the extension of $f$ by zero to $(-\pi,\pi)$. Then \eqref{eq:dd2} means that $\widehat{g}_n=0$ for all $n\in\supp\omega$. Equivalently, $\supp\widehat{g}\subset\bbZ\setminus\supp\omega$. Recalling our definition of $\omega$, we see that 
\[
\supp\widehat{g}\subset \{n:\varkappa_n=\varkappa_{\max}\}.
\]
Now let us compute the quadratic form of ${\myQ}_\varkappa$ on the sequence $\widehat{g}$: 
\begin{align*}
\jap{{\myQ}_\varkappa\widehat{g},\widehat{g}}_{\ell^2}
&=
\sum_{n,m\in\bbZ}\widehat{\varphi}_{n-m}\sqrt{\varkappa_m}\widehat{g}_m\sqrt{\varkappa_n}\overline{\widehat{g}_n}
=
\varkappa_{\max}
\sum_{n,m\in\bbZ}\widehat{\varphi}_{n-m}\widehat{g}_m\overline{\widehat{g}_n}
\\
&=
\varkappa_{\max}
\int_{-\pi}^\pi \varphi(k)\abs{g(k)}^2\frac{\dd k}{2\pi}
=
\varkappa_{\max}\varphi_{\max}
\int_{-\pi}^\pi\abs{g(k)}^2\frac{\dd k}{2\pi}
\\
&=
\sigma_{\max}\norm{g}^2_{L^2}
=\sigma_{\max}\norm{\widehat{g}}^2_{\ell^2}.
\end{align*}
Since $\sigma_{\max}$ is the supremum of the spectrum, it follows that $\sigma_{\max}$ is an eigenvalue of ${\myQ}_\varkappa$ with the eigenvector $\widehat{g}$. 

We recall that the above calculation applies to all $\varkappa$ from a set of full measure. We obtain that $\sigma_{\max}$ is an eigenvalue of ${\myQ}_\varkappa$ almost surely. By Proposition~\ref{prp:A3}, it follows that $\nu(\{\sigma_{\max}\})>0$. 

\emph{Part (ii):}
Let us assume without loss of generality that  the set $\{k: \varphi(k)=\varphi_{\max}\}$ is contained in the arc $[-a,a]$. Again, let $p=\bbP_0(\{\varkappa_{\max}\})$ and define $\omega_n$ in the same way as in part (i). Take any $\omega$ with $D_{\rm Ka}(\supp\omega)=1-p$. To get a contradiction, suppose that $\sigma_{\max}$ is an eigenvalue of ${\myQ}_\varkappa$ with an eigenvector $\widehat g$, where $g\in L^2(-\pi,\pi)$. We have 
\begin{align*}
\sigma_{\max}
=
\frac{\jap{{\myQ}_\varkappa\widehat{g},\widehat{g}}_{\ell^2}}{\norm{\widehat{g}}_{\ell^2}^2}
=
\frac{\jap{L_\varphi\sqrt{\varkappa}\widehat{g},\sqrt{\varkappa}\widehat{g}}_{\ell^2}}{\norm{\widehat{g}}_{\ell^2}^2}
=
\frac{\jap{L_\varphi\sqrt{\varkappa}\widehat{g},\sqrt{\varkappa}\widehat{g}}_{\ell^2}}{\norm{\sqrt{\varkappa}\widehat{g}}_{\ell^2}^2}
\frac{\norm{\sqrt{\varkappa}\widehat{g}}^2}{\norm{\widehat{g}}_{\ell^2}^2}.
\end{align*}
For the equality to hold, both fractions in the right-hand side must attain their maximal values. This implies that $\supp\widehat{g}\subset\{n: \varkappa_n=\varkappa_{\max}\}$ and $\supp g\subset[-a,a]$. In particular, we have $\widehat{g}_n=0$ for all $n\in\supp\omega$. By Proposition~\ref{prp:dd1}, $g$ must vanish, this is a contradiction. Thus, $\sigma_{\max}$ is not an eigenvalue of ${\myQ}_\varkappa$ almost surely. By Proposition~\ref{prp:A3}, we find that $\nu(\{\sigma_{\max}\})=0$. The proof of Theorem~\ref{thm:aa4a} is complete. 
\qed

\section{Lifshitz tails bounds: proof of Theorem~\ref{thm:aa5}}
\label{sec.d} 

\subsection{The case of a model symbol $\varphi$}
We start by proving Theorem~\ref{thm:aa5} in the key particular case of the explicit model symbol 
\[
\varphi(k)=a+b\cos k
\]
with $a\in\bbR$ and $b>0$. In this case, $L_\varphi$ is the tri-diagonal matrix
with $a$ on the main diagonal and $b/2$ on the sub and super-diagonals, and 
\[
\varphi_{\max}=a+b. 
\]
For convenience of referencing, let us state the intermediate result that we are going to prove. 
\begin{theorem}\label{thm:gg1}
Assume condition \eqref{eq:aa11a} on the distribution $\bbP_0$. Let $\nu$ be the IDS measure for the operator ${\myQ}_\varkappa=\sqrt{\varkappa}L_{\varphi}\sqrt{\varkappa}$ with $\varphi(k)=a+b\cos k$ and $a+b>0$, and let $\sigma_{\max}=(a+b)\varkappa_{\max}$.  Then 
\begin{align}
\lim_{\delta\to0_+} \log \bigl(-\log \nu((\sigma_{\max}-\delta,\sigma_{\max}))\bigr)/\log\delta = -\frac12.
\label{eq:gg1} 
\end{align}
\end{theorem}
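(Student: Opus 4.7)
The strategy is to reframe the top-edge Lifshitz problem as a bottom-edge one and then invoke the classical Dirichlet--Neumann bracketing plus Temple's inequality machinery. Since $L_\varphi = (a+b)I - (b/2)(-\Delta)$ with $-\Delta = 2I - S - S^*$ the non-negative discrete Laplacian on $\ell^2(\bbZ)$, the non-negative operator
\[
\widetilde{H}_\varkappa := \sigma_{\max} I - \myQ_\varkappa = (a+b)(\varkappa_{\max} - \varkappa) + \tfrac{b}{2}\sqrt{\varkappa}\,(-\Delta)\,\sqrt{\varkappa}
\]
has IDS obtained from $\nu$ by reflection about $\sigma_{\max}$, so \eqref{eq:gg1} is equivalent to Lifshitz tails for $\widetilde{H}_\varkappa$ at its spectral bottom $0$. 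Standard Dirichlet--Neumann bracketing on boxes $\Lambda_L = [-L, L]$ then yields
\[
\frac{\bbE\{\calN_{<\delta}(\widetilde{H}_\varkappa^D)\}}{2L+1} \leq \nu\bigl((\sigma_{\max} - \delta, \sigma_{\max}]\bigr) \leq \frac{\bbE\{\calN_{<\delta}(\widetilde{H}_\varkappa^N)\}}{2L+1},
\]
where $\calN_{<\delta}(T)$ denotes the number of eigenvalues of $T$ strictly below $\delta$. The parameter $L \asymp 1/\sqrt{\delta}$ is tuned to match the kinetic scale $L^{-2}$ of the box Laplacian.

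For the \emph{lower bound} on $\nu$ I use the Dirichlet side with trial vector $\phi \in \ell^2(\Lambda_L)$ defined by $\sqrt{\varkappa}\phi$ equal to the positive ground state of $-\Delta_{\Lambda_L}^D$ (a sine profile of kinetic energy $\asymp L^{-2}$). On the event $\Omega_{L, \eps} := \{\varkappa_j \in [\varkappa_{\max} - \eps, \varkappa_{\max}] : j \in \Lambda_L\}$, a direct Rayleigh-quotient computation gives $\langle\widetilde{H}_\varkappa^D \phi, \phi\rangle / \|\phi\|^2 \leq C(\eps + L^{-2})$; with $\eps = c\delta$ and $L = \lfloor c/\sqrt{\delta}\rfloor$ this is $< \delta$, so $\calN_{<\delta}(\widetilde{H}_\varkappa^D) \geq 1$ on $\Omega_{L, \eps}$. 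By the polynomial lower bound \eqref{eq:aa11a}, $\bbP(\Omega_{L, \eps}) \geq (C\eps^\ell)^{2L+1}$, which yields $-\log \nu \leq C \log(1/\delta)/\sqrt{\delta}$; the extra $\log(1/\delta)$ factor is absorbed by the outer logarithm in \eqref{eq:gg1}, giving the $\liminf \geq -\tfrac12$ half.

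For the \emph{upper bound} I must show $\bbP(\lambda_{\min}(\widetilde{H}_\varkappa^N) < \delta) \leq \exp(-c/\sqrt{\delta})$. The crucial structural fact is that the kinetic operator $K := \tfrac{b}{2}\sqrt{\varkappa}(-\Delta_{\Lambda_L}^N)\sqrt{\varkappa} \geq 0$ has one-dimensional kernel spanned by $\phi_0(j) := 1/\sqrt{\varkappa_j}$ (since $\sqrt{\varkappa}\phi_0 = \mathbf{1} \in \ker(-\Delta_{\Lambda_L}^N)$) with spectral gap bounded below by $c\varkappa_{\min}/L^2$. Apply Temple's inequality to $\widetilde{H}_\varkappa^N = V + K$, with $V := (a+b)(\varkappa_{\max} - \varkappa)|_{\Lambda_L}$, at the trial vector $\phi_0$ to obtain a lower bound of the form $\lambda_{\min}(\widetilde{H}_\varkappa^N) \gtrsim \overline{V}_L$, where $\overline{V}_L := (2L+1)^{-1}\sum_{j \in \Lambda_L} V_j$ is the sample mean. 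Then $\{\lambda_{\min}(\widetilde{H}_\varkappa^N) < \delta\}$ forces $\overline{V}_L \leq C\delta$; since $\bbE\{\varkappa_{\max} - \varkappa_0\} > 0$ by the hypothesis $\varkappa_{\min} < \varkappa_{\max}$, Cram\'er's large-deviation theorem for bounded i.i.d.\ variables gives $\bbP(\overline{V}_L \leq C\delta) \leq \exp(-c(2L+1)) = \exp(-c/\sqrt{\delta})$. Combining with the trivial bound $\calN_{<\delta}(\widetilde{H}_\varkappa^N) \leq 2L + 1$ on that event yields the $\limsup \leq -\tfrac12$ half.

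The main technical obstacle is the Temple-type lower bound $\lambda_{\min}(\widetilde{H}_\varkappa^N) \gtrsim \overline{V}_L$: Temple's correction involves the variance of $V$ in the state $\phi_0$, divided by the spectral gap $\asymp L^{-2}$, and that ratio is not a priori small in the Lifshitz regime $L \asymp 1/\sqrt{\delta}$. The standard fix, following Simon's treatment of the Anderson model, is a truncation/conditioning step in the distribution of $V_j$, separating a ``clean'' majority-event on which Temple is effective from a rare remainder handled by a direct geometric confinement estimate. All other ingredients --- the Dirichlet--Neumann bracketing for the random Jacobi matrix, the explicit Dirichlet trial-state construction, and the Cram\'er estimate for $\overline{V}_L$ --- are routine in the Lifshitz-tails literature.
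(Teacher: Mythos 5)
Your proposal is correct in outline, and its lower-bound half is essentially the paper's own argument (the event that all $\varkappa_j$ on a box of side $L\asymp\delta^{-1/2}$ lie within $c\delta$ of $\varkappa_{\max}$ forces an eigenvalue above $\sigma_{\max}-\delta$, and \eqref{eq:aa11a} gives probability at least $(C\delta^\ell)^{CL}$), modulo the cosmetic reflection $\myQ_\varkappa\mapsto\sigma_{\max}I-\myQ_\varkappa$ which swaps your Dirichlet/Neumann labels relative to the paper's direct top-edge treatment. The upper-bound half is where you genuinely diverge. You follow Simon's classical route: Temple's inequality at the kernel vector $\varkappa^{-1/2}\mathbf 1$ of the kinetic part, with the variance-over-gap correction tamed by truncating the potential at scale $L^{-2}$ --- an obstacle you correctly identify but defer to the literature. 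The paper deliberately avoids Temple: since the top Dirichlet eigenvector of the model Jacobi block is exactly the constant vector $\mathbbm{1}_L$, with gap $\mu_L\asymp L^{-2}$, it uses the rank-one estimate $J^{(D)}\leq 2-\mu_L+\mu_L\jap{\bigcdot,\mathbbm{1}_L}\mathbbm{1}_L$, whence $\norm{a\varkappa+\sqrt{\varkappa}J^{(D)}\sqrt{\varkappa}}\leq\sigma_{\max}-\mu_L(\kappamax-\jap{\varkappa})$, so the bad event is simply $\{\jap{\varkappa}>\gamma\kappamax\}$ and is killed by a plain Cram\'er estimate. This buys a shorter argument with no truncation bookkeeping, exploiting that the relevant extremal eigenvector is deterministic at the top edge; your reflected operator has the $\varkappa$-dependent ground state $\varkappa^{-1/2}\mathbf 1$, which is precisely why you need Temple plus truncation. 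Your route does work, but note one imprecision to fix if you write it out: after truncation, Temple gives $\lambda_{\min}\gtrsim$ the mean of the \emph{truncated} variables $\min(V_j,\epsilon L^{-2})$, not of $V_j$ itself, so the event $\{\lambda_{\min}<\delta\}$ yields control on the number of sites with $V_j\gtrsim L^{-2}$ rather than literally $\overline V_L\leq C\delta$; the bound $\ee^{-cL}$ then comes from a binomial large-deviation estimate (using $\bbP_0(\{\varkappa_{\max}\})<1$, guaranteed by $\kappamin<\kappamax$), not from Cram\'er applied to the untruncated sample mean.
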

We note that in the framework of this theorem, the model symbol $a+b\cos k$ may attain negative values, since $\varphi_{\min}=a-b$ can be either positive or negative. While the sign of  $a-b$ plays no role in the proof of Theorem~\ref{thm:gg1}, it must remain unrestricted so that Theorem~\ref{thm:gg1} can be applied in the proof of Theorem~\ref{thm:aa5}. Accordingly, in the setting of Theorem~\ref{thm:gg1} we temporarily dispense with the assumption $\varphi\geq0$. In any case, the IDS measure $\nu$ of ${\myQ}_\varkappa$ is well-defined by \eqref{eq:cc0} independently of the sign of $\varphi$.

\subsection{Bracketing}
We start the proof of Theorem~\ref{thm:gg1}. 
By a trivial rescaling, we may assume $b=2$. Next, we denote 
\[
J=L_\varphi\quad\text{ for }\quad \varphi(k)=2\cos k. 
\]
In other words, $J$ is the standard two-sided Jacobi matrix with entries
\begin{equation*}
J_{n-1,n}=J_{n+1,n}=1
\end{equation*}
and $J_{n,m}=0$ otherwise. With this notation, 
\[
{\myQ}_\varkappa=\sqrt{\varkappa}(aI+J)\sqrt{\varkappa}=a\varkappa+\sqrt{\varkappa}J\sqrt{\varkappa}.
\]
We follow the approach of Kirsch and Martinelli \cite{Ki-Ma:83} as presented in \cite{Simon}.
Fix $L$ and consider the $L\times L$ matrices
\begin{equation*}
J^{(N)} =
\begin{pmatrix}
-1 & 1 &  &  &  \\
1 & 0 & 1 &  &  \\
& \ddots & \ddots & \ddots &  \\
&  & 1 & 0 & 1 \\
&  &  & 1 & -1%
\end{pmatrix}%
, \qquad J^{(D)} =
\begin{pmatrix}
1 & 1 &  &  &  \\
1 & 0 & 1 &  &  \\
& \ddots & \ddots & \ddots &  \\
&  & 1 & 0 & 1 \\
&  &  & 1 & 1%
\end{pmatrix}%
.
\end{equation*}
Here on the diagonal we have entries
\begin{align*}
-1,0,0,\dots,0,0,-1 \quad\text{ for $J^{(N)}$,} \\
1,0,0,\dots,0,0,1 \quad\text{ for $J^{(D)}$,}
\end{align*}
the entries on the sub- and super-diagonals are $1$, and all other entries
are zeros.  These matrices are the discrete analogues of the Neumann and Dirichlet boundary value problems  for $-\dd^2/\dd x^2$ on a finite interval. 

Both $J^{(N)}$ and $J^{(D)}$ are easily diagonalised; the
eigenvalues are
\begin{align*}
\lambda_k(J^{(D)})&=-2\cos(\tfrac{\pi}{L}k), \quad k=1,\dots, L, \\
\lambda_k(J^{(N)})&=-2\cos(\tfrac{\pi}{L}(k-1)), \quad k=1,\dots, L.
\end{align*}
Further, we write 
\[
\bbZ=\bigcup_{n\in\bbZ}Z_n, \quad 
Z_n:=\{nL, nL+1,\dots,nL+L-1\},
\]
and the corresponding orthogonal sum
\begin{equation}
\ell^2(\bbZ)=\bigoplus_{n\in\bbZ}\ell^2(Z_n). 
\label{eq:gg3a}
\end{equation}
Of course, $\ell^2(Z_n)$ here is isomorphic to $\bbC^L$. 
With respect to the orthogonal sum decomposition \eqref{eq:gg3a}, we have the bracketing
\begin{equation*}
\bigoplus_{n\in\bbZ} J^{(N)}_n\leq J\leq \bigoplus_{n\in\bbZ} J^{(D)}_n
\end{equation*}
where the left-hand (resp. right-hand) side is the orthogonal sum of countably
many copies of shifted versions of $J^{(N)}$ (resp. $J^{(D)}$). 
In what follows, we suppress the index $n$ in our notation, i.e. we write the previous equation as 
\begin{equation*}
\bigoplus_{n\in\bbZ} J^{(N)}\leq J\leq \bigoplus_{n\in\bbZ} J^{(D)}.
\end{equation*}
As a consequence of the last relation, we have
\begin{equation*}
\bigoplus_{n\in\bbZ} \sqrt{\varkappa} J^{(N)}\sqrt{\varkappa} 
\leq \sqrt{\varkappa}
J\sqrt{\varkappa} 
\leq \bigoplus_{n\in\bbZ} \sqrt{\varkappa} J^{(D)}\sqrt{\varkappa}.
\end{equation*}
(Here we are abusing notation a little; in the expressions $\sqrt{\varkappa}
J^{(N)}\sqrt{\varkappa}$ and $\sqrt{\varkappa} J^{(D)}\sqrt{\varkappa}$, the
symbol $\varkappa$ means a length $L$ sequence $\varkappa_1,\dots,\varkappa_L $ of i.i.d.  random variables, while in the expression $\sqrt{\varkappa} J\sqrt{\varkappa}$ it means the infinite sequence.) It follows that
\begin{equation}
\bigoplus_{n\in\bbZ}
\bigl(a\varkappa+\sqrt{\varkappa} J^{(N)}\sqrt{\varkappa}\bigr) 
\leq
{\myQ}_\varkappa 
\leq 
\bigoplus_{n\in\bbZ}
\bigl(a\varkappa+\sqrt{\varkappa}
J^{(D)}\sqrt{\varkappa}\bigr).
\label{eq:gg6}
\end{equation}
Let us denote 
\[
\myQ_\varkappa^{L,N}=a\varkappa+\sqrt{\varkappa} J^{(N)}\sqrt{\varkappa}, 
\quad
\myQ_\varkappa^{L,D}=a\varkappa+\sqrt{\varkappa} J^{(D)}\sqrt{\varkappa}.
\]
We consider $\varkappa=(\varkappa_1,\dots,\varkappa_L)$ as the random variable on $\bbR^L$ with respect to the probability measure $\bbP_L$, which is the product of $L$ copies of $\bbP_0$; we denote by $\bbE_L$ the corresponding expectation. With this notation, by a variational argument similar to Proposition~\ref{prp:A2a}, from \eqref{eq:gg6} one obtains (see \cite[Theorem~2.5]{Simon}) the upper and lower bounds 
\begin{equation}
\nu_L^{(N)}((\lambda,\infty))
\leq
\nu((\lambda,\infty))
\leq
\nu_L^{(D)}((\lambda,\infty)), \quad \lambda>0,
\label{eq:gg7}
\end{equation}
where
\begin{equation}
\nu_L^{(\#)}((\lambda,\infty))
=
L^{-1} \bbE_L\left\{\calN(\lambda;\myQ_\varkappa^{L,\#})\right\}
\label{eq:gg8}
\end{equation}
with $\#=N$ or $D$, and the counting function $\calN$ defined in \eqref{eq:CF}.

\subsection{Lower bound}
Combining \eqref{eq:gg7} and \eqref{eq:gg8}, we find 
\begin{equation}
\nu((\sigma_{\max}-\delta,\infty))
\geq
L^{-1} \bbE_L\left\{\calN(\sigma_{\max}-\delta;\myQ_\varkappa^{L,N})\right\}.
\label{eq:gg11}
\end{equation}
We need to give a lower bound for the eigenvalue counting function in the right-hand side. 
We will do this by proving that if 
\begin{equation}
\varkappa_n\geq \varkappa_{\max}-\frac{\delta}{2a+4}\quad \text{ for all $n=1,\dots,L$,}
\label{eq:gg9}
\end{equation}
then $\myQ_\varkappa^{L,N}$ has at least one eigenvalue greater than $\sigma_{\max}-\delta$, whenever $\delta$ and $L$ are linked by 
\begin{equation}
\frac{2\pi\sqrt{\varkappa_{\max}}}{\sqrt{\delta}}
<L\leq
1+\frac{2\pi\sqrt{\varkappa_{\max}}}{\sqrt{\delta}}.
\label{eq:gg10}
\end{equation}

Observe that the top Neumann eigenvalue of $J^{(N)}$ is
\begin{equation*}
\lambda_L(J^{(N)})=-2\cos\pi(1-L^{-1})=:2-\mu_L,
\quad\text{ where }\quad
\mu_L=\frac{\pi^2}{L^2}+o(L^{-2}).
\end{equation*}
If \eqref{eq:gg9} is satisfied, then by simple variational considerations for the top eigenvalue of $\myQ_\varkappa^{L,N}$ we find 
\begin{align*}
\lambda_L(\myQ_\varkappa^{L,N})
&\geq
(\varkappa_{\max}-\delta/(2a+4))(a+\lambda_L(J^{(N)}))
\\
&=
(\varkappa_{\max}-\delta/(2a+4))(a+2-\mu_L)
\geq
\sigma_{\max}-\varkappa_{\max}\mu_L-\delta/2. 
\end{align*}
Now an elementary calculation shows that if $\delta$ and $L$ are linked by \eqref{eq:gg10}, then 
\[
\lambda_L(\myQ_\varkappa^{L,N})\geq \sigma_{\max}-\delta
\]
for all sufficiently small $\delta$. 
We can now give a lower bound for the right-hand side of \eqref{eq:gg11} by estimating the $\bbP_L$-probability of the event \eqref{eq:gg9}. Using our assumption \eqref{eq:aa11a}, we find
\begin{align*}
\bbP_L\bigl(\text{\eqref{eq:gg9}}\bigr)
=
\biggl(\bbP_0\bigl(\{\varkappa_0\geq\varkappa_{\max}-\tfrac{\delta}{2a+4}\}\bigr)\biggr)^L
\geq
\left(C(\delta/(2a+4))^\ell\right)^L.
\end{align*}
From here by \eqref{eq:gg11} we find 
\[
\nu((\sigma_{\max}-\delta,\infty))
\geq
L^{-1}\left(C(\delta/(2a+4))^\ell\right)^L. 
\]
Recalling \eqref{eq:gg10}, we rewrite this as 
\[
\nu((\sigma_{\max}-\delta,\infty))
\geq
C_1\sqrt{\delta}(C_2\delta^\ell)^{C_3/\sqrt{\delta}}.
\]
Taking logarithms, from here we find
\begin{equation*}
\liminf_{\delta\to0_+} \log\bigl(-\log \nu(\sigma_{\max}-\delta,\sigma_{\max})\bigr)/\log\delta \geq -\frac12,
\end{equation*}
i.e. we obtain the lower bound in \eqref{eq:gg1}.

\subsection{Upper bound}
Combining \eqref{eq:gg7} and \eqref{eq:gg8}, we find 
\begin{equation}
\nu((\sigma_{\max}-\delta,\infty))
\leq
L^{-1} \bbE_L\left\{\calN(\sigma_{\max}-\delta;\myQ_\varkappa^{L,D})\right\},
\label{eq:gg12}
\end{equation}
and so we need to give an upper bound for the eigenvalue counting function in the right-hand side. 

We start by giving an estimate which does the same job for us as Temple's
inequality does for Simon in \cite{Simon}. Observe that the top Dirichlet
eigenvalue is $2$, with the normalized eigenvector
\begin{equation*}
\mathbbm{1}_L:=L^{-1/2}(1,\dots,1)^T\in{\mathbb{C}}^L.
\end{equation*}
The next Dirichlet eigenvalue is
\begin{equation*}
\lambda_{L-1}(J^{(D)})=-2\cos\pi(1-L^{-1})=2-\mu_L, 
\quad\text{ where }\quad
\mu_L=\frac{\pi^2}{L^2}+o(L^{-2}).
\end{equation*}
It follows that we have the bound
\begin{align*}
J^{(D)}&\leq \lambda_{L-1}(J^{(D)})\bigl(I-\langle \cdot,\mathbbm{1}_L\rangle
\mathbbm{1}_L\bigr) +2\langle \cdot,\mathbbm{1}_L\rangle\mathbbm{1}_L
\\
&=2-\mu_L+\mu_L \langle \cdot,\mathbbm{1}_L\rangle\mathbbm{1}_L.
\end{align*}
From here we get
\begin{equation*}
a\varkappa+\sqrt{\varkappa} J^{(D)} \sqrt{\varkappa} \leq
(a+2-\mu_L)\varkappa+\mu_L\langle \cdot,\sqrt{\varkappa}\mathbbm{1}%
_L\rangle\sqrt{\varkappa}\mathbbm{1}_L.
\end{equation*}
Using the triangle inequality, we obtain the norm bound
\begin{align*}
\norm{a\varkappa+\sqrt{\varkappa} J^{(D)} \sqrt{\varkappa}}
&\leq
(a+2-\mu_L)\kappamax+\mu_L
\norm{\sqrt{\varkappa}\mathbbm{1}_L}^2 
\\
&=\sigma_{\max} -\mu_L\kappamax+\mu_L\jap{\varkappa},
\end{align*}
where
\begin{equation*}
\jap{\varkappa}=\frac1L\sum_{k=1}^L\varkappa_k.
\end{equation*}

Let us rephrase this as follows. Fix some $0<\gamma<1$. Suppose $\varkappa$ satisfies
\begin{equation}
\jap{\varkappa}\leq \gamma\kappamax.  \label{gg5}
\end{equation}
Then for this $\varkappa$ we have
\begin{equation}
\norm{a\varkappa+\sqrt{\varkappa} J^{(D)} \sqrt{\varkappa}} 
\leq
\sigma_{\max}-(1-\gamma)\kappamax\mu_L.
\label{eq:gg13}
\end{equation}

Next, for any small $\delta>0$, take $L$ such that $\delta$ and $L$ are linked by 
\[
\frac{\pi\sqrt{(1-\gamma)\varkappa_{\max}}}{2\sqrt{\delta}}
<L\leq1+\frac{\pi\sqrt{(1-\gamma)\varkappa_{\max}}}{2\sqrt{\delta}}. 
\]
Then it is easy to check that for all sufficiently small $\delta$, the right-hand side of \eqref{eq:gg13} is less than $\sigma_{\max}-\delta$. Thus, $\varkappa$ satisfying \eqref{gg5} do not contribute to the right-hand side of \eqref{eq:gg12}.

Now using our assumption that the support of $\bbP_0$ is not a single point, we find that 
$\bbE_0\{\varkappa_0\}<\varkappa_{\max}$. Thus, we can find $\gamma\in(0,1)$ such that 
\[
\bbE_0\{\varkappa_0\}<\gamma\varkappa_{\max}.
\]
Then by a large deviations argument (see e.g. \cite[Theorem~4.2]{Simon}), the probability of the set of all $\varkappa$ that do NOT satisfy \eqref{gg5}, is $O(\ee^{-C_\gamma L})$ with some $C_\gamma>0$. Thus, estimating the right-hand side of \eqref{eq:gg12}, we obtain
\begin{equation*}
\nu((\sigma_{\max}-\delta,\infty))\leq \ee^{-C_\gamma L}=\ee^{-C^{\prime
}_\gamma/\sqrt{\delta}}.
\end{equation*}
From here we find
\begin{equation*}
\limsup_{\delta\to0_+} \log\bigl(-\log \nu((\sigma_{\max}-\delta,\sigma_{\max}))\bigr)/\log\delta \leq -\frac12,
\end{equation*}
i.e. we obtain the upper bound in \eqref{eq:gg1}. The proof of Theorem~\ref{thm:gg1} is complete. \qed

\subsection{Comparison lemma}
In the rest of the section we show how to derive Theorem~\ref{thm:aa5} from Theorem~\ref{thm:gg1}. We follow the idea of \cite{GRM1}, comparing the symbol $\varphi$ with the rescaled symbol $\cos k$. Here we deal with operators ${\myQ}_\varkappa$ corresponding to several symbols, and so we will use notation ${\myQ}_{\varphi,\varkappa}$. Similarly, we will write $\nu(\Delta;\varphi)$ instead of $\nu(\Delta)$ (the sequence $\varkappa$ will be always the same.) 

We start by isolating two simple statements. 
\begin{lemma}\label{lma:d1}
Let $\varphi_1$ and $\varphi_2$ be two non-constant real-valued symbols in $L^\infty(\bbT)$ and let $\varkappa$ be a sequence of random variables satisfying  Assumption~\ref{ass:1}(a). 
\begin{enumerate}[\rm (i)]
\item
Suppose $\varphi_1(k)=\varphi_2(k-k_0)$ for some $k_0$. Then the IDS measures $\nu(\cdot;\varphi_1)$ and $\nu(\cdot;\varphi_2)$ coincide. 
\item
Suppose $\varphi_1\leq\varphi_2$ on $\bbT$. Then for any $\lambda>0$, 
\[
\nu((\lambda,\infty);\varphi_1)\leq \nu((\lambda,\infty);\varphi_2).
\]
\end{enumerate}
\end{lemma}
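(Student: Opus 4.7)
The plan is to work in the $\ell^2(\bbZ)$ realisation \eqref{eq:bb2} and in both parts to exhibit a pointwise-in-$\varkappa$ relation between $\myQ_{\varphi_1,\varkappa}$ and $\myQ_{\varphi_2,\varkappa}$ strong enough to pass to the IDS via the definition \eqref{eq:cc0}.

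\emph{Part (i).} A shift of the symbol by $k_0$ multiplies Fourier coefficients by a phase: $\widehat{\varphi_1}_n = e^{-\ii n k_0}\widehat{\varphi_2}_n$. Let $D$ be the diagonal unitary on $\ell^2(\bbZ)$ defined by $D\delta_n = e^{\ii n k_0}\delta_n$. A direct calculation with matrix entries $\widehat{\varphi_j}_{n-m}$ then gives $L_{\varphi_1} = D^* L_{\varphi_2} D$. Since $D$ is diagonal, it commutes with the multiplication operator $\sqrt{\varkappa}$, so
\[
\myQ_{\varphi_1,\varkappa} = D^* \myQ_{\varphi_2,\varkappa} D
\quad\text{for every }\varkappa\in\Omega.
\]
Because $D\delta_0 = \delta_0$, substitution into \eqref{eq:cc0} immediately yields equality of the IDS measures.

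\emph{Part (ii).} By the Fourier correspondence of $L_\varphi$ with multiplication by $\varphi$ on $L^2(\bbT)$, the pointwise bound $\varphi_1 \leq \varphi_2$ gives $L_{\varphi_1} \leq L_{\varphi_2}$ as bounded self-adjoint operators. Conjugating by the positive diagonal $\sqrt{\varkappa}$ preserves this inequality, so for every $\varkappa\in\Omega$,
\[
\myQ_{\varphi_1,\varkappa} \leq \myQ_{\varphi_2,\varkappa}.
\]
Compression by $1_N$ preserves the ordering, so the same holds for the finite matrices $\myQ_{\varphi_i,\varkappa}^{(N)} := 1_N \myQ_{\varphi_i,\varkappa} 1_N$. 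The min-max principle then delivers
\[
\calN(\lambda; \myQ_{\varphi_1,\varkappa}^{(N)}) \leq \calN(\lambda; \myQ_{\varphi_2,\varkappa}^{(N)})
\]
for every $\lambda\in\bbR$, every $N$, and every $\varkappa$. Dividing by $2N+1$ and passing to the limit via Proposition~\ref{prp:A2} (equivalently, Theorem~\ref{thm:aa3}) gives the desired comparison of the IDS measures.

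The only real technical wrinkle is that the convergence in Theorem~\ref{thm:aa3} is weak, while $\chi_{(\lambda,\infty)}$ is not continuous. This is handled in a routine way: since the finite-$N$ inequality above holds for \emph{every} $\lambda$, approximating $\chi_{(\lambda,\infty)}$ from below by non-negative continuous functions, applying the weak-convergence statement of Theorem~\ref{thm:aa3} to each approximant, and then invoking monotone convergence recovers the inequality at every $\lambda>0$. This is the main obstacle, but a mild one.
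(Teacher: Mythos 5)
Your proof is correct, and both parts rest on the same ideas as the paper's. Part (i) is the paper's argument made explicit: the shift of the symbol corresponds to conjugation of $\myQ_\varkappa$ by the diagonal unitary $D\delta_n=\ee^{\ii nk_0}\delta_n$, which commutes with $\sqrt{\varkappa}$ and fixes $\delta_0$, so \eqref{eq:cc0} is unchanged. In part (ii) you follow the paper up to the operator inequality $\myQ_{\varphi_1,\varkappa}\leq\myQ_{\varphi_2,\varkappa}$; the only divergence is the final step, where the paper simply invokes Proposition~\ref{prp:A2a} (the IDS comparison for ergodic operators, quoted from Pastur--Figotin), whereas you re-derive that comparison from Theorem~\ref{thm:aa3}/Proposition~\ref{prp:A2} via finite-volume compressions and min--max. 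Your re-derivation is fine, but note that the concluding approximation needs slightly more care than stated: for a general continuous $0\leq f\leq\chi_{(\lambda,\infty)}$, the counting-function inequality at the single value $\lambda$ does not by itself give $\Tr f(\myQ^{(N)}_{\varphi_1,\varkappa})\leq\Tr f(\myQ^{(N)}_{\varphi_2,\varkappa})$; you should either take the approximants non-decreasing (then the eigenvalue-wise inequality $\lambda_k(\myQ^{(N)}_{\varphi_1,\varkappa})\leq\lambda_k(\myQ^{(N)}_{\varphi_2,\varkappa})$ from min--max gives the trace inequality directly) or integrate the counting inequality over all $\lambda$ (layer-cake), after which monotone convergence closes the argument as you say. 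With that adjustment your route is a self-contained proof of the special case of Proposition~\ref{prp:A2a} that the lemma needs; citing that proposition, as the paper does, is just shorter.
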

\begin{proof}
(i) It suffices to note that the shift of the variable $k$ by a constant $k_0$ results in the conjugation of ${\myQ}_\varkappa$ by the unitary operator of multiplication by the oscillating sequence $\ee^{-\ii n k_0}$. It is clear from the definition of $\nu$ that this conjugation does not change $\nu$. 

(ii) From $\varphi_1\leq \varphi_2$ we immediately obtain the operator inequalities (in the sense of quadratic forms)
\[
L_{\varphi_1}\leq L_{\varphi_2}.
\]
Multiplying by $\sqrt{\varkappa}$ on both sides, we find
\[
{\myQ}_{\varphi_1,\varkappa}\leq {\myQ}_{\varphi_2,\varkappa}.
\]
Now the required estimate follows from Proposition~\ref{prp:A2a} of the Appendix. 
\end{proof}

\subsection{Proof of Theorem~\ref{thm:aa5}}
We have $\varphi(k_{\max})=\varphi_{\max}$ at the maximum; by our assumptions, $\varphi''(k_{\max})<0$. It follows that 
\[
c_-(1-\cos k)
\leq \varphi_{\max}-\varphi(k-k_{\max})
\leq c_+ (1-\cos k),
\]
with some $0<c_-<c_+$. We rewrite this as 
\[
\varphi_{-}(k+k_{\max})\leq \varphi(k)\leq \varphi_{+}(k+k_{\max}),
\]
with 
\begin{align*}
\varphi_{-}&=
\varphi_{\max}-c_++c_+\cos k,
\\
\varphi_{+}&=
\varphi_{\max}-c_-+c_-\cos k.
\end{align*}
By Lemma~\ref{lma:d1}, we find 
\[
\nu((\sigma_{\max}-\delta,\infty);\varphi_{-})
\leq
\nu((\sigma_{\max}-\delta,\infty);\varphi)
\leq
\nu((\sigma_{\max}-\delta,\infty);\varphi_{+}).
\]
We note that 
\[
\max\varphi_{-}=\max\varphi_{+}=\varphi_{\max}.
\]
Finally, we note that both symbols $\varphi_{-}$, $\varphi_{+}$, are of the form $a+b\cos k$ considered in Theorem~\ref{thm:gg1}. Thus, the required estimates  follow from this theorem. The proof of Theorem~\ref{thm:aa5} is complete. \qed

\section{Anderson type localisation: proof of Theorem~\ref{thm:aa6}}
\label{sec.f} 

\subsection{Overview and notation}
As in the previous section, we use the realisations 
\[
\calP_\varkappa =L_{\theta}\varkappa L_{\theta}
\quad \text{ and }\quad
{\myQ}_\varkappa=\sqrt{\varkappa}L_\varphi\sqrt{\varkappa}
\]
in $\ell^2(\bbZ)$; recall that $\theta=\sqrt{\varphi}$ and $(L_\theta)^2=L_\varphi$.  Our key idea is to write 
\begin{equation}
\varphi=1+\phi
\quad\text{ and }\quad
{\myQ}_\varkappa=\varkappa+\sqrt{\varkappa}L_\phi\sqrt{\varkappa}
\label{eq:f11}
\end{equation}
and treat $\varkappa$ as the ``main'' diagonal term and $\sqrt{\varkappa}L_\phi\sqrt{\varkappa}$ as a small off-diagonal perturbation. We follow very closely the argument of \cite{Aiz-Mol}; in their terminology, our asymptotic regime of small $\phi$ corresponds to \emph{high disorder}. The argument consists of the following parts:
\begin{itemize}
\item
Connection between the resolvents of $\calP_\varkappa$ and ${\myQ}_\varkappa$;
\item
Rank-one resolvent identities; 
\item
Decoupling inequalities;
\item
Simon--Wolff criterion.
\end{itemize}
The first item is specific to our model, while the other three items are exactly as in \cite{Aiz-Mol}. In what follows, $z$ is a spectral parameter with $\Im z>0$. We denote the resolvents of $\calP_\varkappa$, ${\myQ}_\varkappa$ by 
\[
R(z)=(\calP_\varkappa-z)^{-1}, \quad
\widetilde{R}(z)=({\myQ}_\varkappa-z)^{-1}.
\]
For $n,m\in\bbZ$, we denote 
\begin{equation}\label{eq:ress}
R_{n,m}(z)=\jap{(\calP_\varkappa-z)^{-1}\psi_n,\psi_m},
\quad
\widetilde{R}_{n,m}(z)=\jap{({\myQ}_\varkappa-z)^{-1}\delta_n,\delta_m}. 
\end{equation}
Because of the shift invariance in Assumption~\ref{ass:1}(c), we are free to shift $n$ and $m$ by the same integer; it will be notationally convenient to take one of these indices equal to zero, so we will be dealing mostly with $R_{0,m}(z)$ and $\widetilde{R}_{0,m}(z)$. 

Most of the proof hinges on estimates on $R_{0,m}(z)$ and $\widetilde{R}_{0,m}(z)$. The dependence on $\varkappa$ is much more transparent (linear) in $R(z)$. On the other hand, $\widetilde{R}(z)$ can be analysed in terms of our splitting \eqref{eq:f11}. Thus, we will be combining information coming from $R(z)$ and $\widetilde{R}(z)$. Of course, the two resolvents are explicitly related and we start by clarifying this relation. 

\subsection{Connection between resolvents $R(z)$ and $\widetilde{R}(z)$.}

\begin{lemma}\label{lma:f1}
For any $m\in\bbZ$, we have for $R_{0,m}(z)$ and $\widetilde{R}_{0,m}(z)$ of \eqref{eq:ress}
\begin{equation}
\delta_{0,m}+z\widetilde{R}_{0,m}(z)
=
\sqrt{\varkappa_0\varkappa_m}R_{0,m}(z).
\label{eq:f1}
\end{equation}
\end{lemma}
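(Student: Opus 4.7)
The strategy is to exploit the factorisation of the two realisations that is already implicit in Section~\ref{sec:bbb2}. Introduce the operator
\[
A = \sqrt{\varkappa}\, L_{\theta} : \ell^2(\bbZ)\to\ell^2(\bbZ),
\]
so that $A^* = L_{\theta}\sqrt{\varkappa}$ (both $L_\theta$ and $\sqrt{\varkappa}$ being self-adjoint). Since $(L_\theta)^2 = L_\varphi$, a direct computation gives
\[
A^*A = L_{\theta}\,\varkappa\, L_{\theta} = \calP_\varkappa, \qquad AA^* = \sqrt{\varkappa}\,L_{\varphi}\,\sqrt{\varkappa} = \myQ_\varkappa.
\]
This places us in the classical situation of the two operators $A^*A$ and $AA^*$, whose resolvents are related by a well-known intertwining identity.

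The key algebraic step is the elementary resolvent identity, valid for any bounded operator $A$ and any $z$ in the resolvent set of both $A^*A$ and $AA^*$:
\begin{equation*}
I + z\,(AA^* - z)^{-1} \;=\; A\,(A^*A - z)^{-1} A^*.
\end{equation*}
This follows from the intertwining $A(A^*A - z) = (AA^* - z)A$ by writing $A(A^*A-z)^{-1}A^* = (AA^*-z)^{-1}AA^* = (AA^*-z)^{-1}\bigl((AA^* - z)+z\bigr)$. Since $\Im z>0$, the resolvents in question are bounded and the identity holds with no further caveats.

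It now only remains to take the $(0,m)$ matrix element on both sides against the canonical basis of $\ell^2(\bbZ)$. The left-hand side equals $\delta_{0,m} + z\widetilde{R}_{0,m}(z)$ by the definition of $\widetilde{R}$. For the right-hand side, observe that
\[
A^*\delta_n \;=\; L_{\theta}\sqrt{\varkappa}\,\delta_n \;=\; \sqrt{\varkappa_n}\,L_{\theta}\delta_n \;=\; \sqrt{\varkappa_n}\,\psi_n,
\]
since $\sqrt{\varkappa}$ acts as multiplication by the sequence $\sqrt{\varkappa_n}$ on basis vectors. Consequently,
\[
\bigl\langle A(\calP_\varkappa - z)^{-1}A^*\delta_0,\delta_m\bigr\rangle
= \bigl\langle (\calP_\varkappa - z)^{-1}A^*\delta_0,\, A^*\delta_m\bigr\rangle
= \sqrt{\varkappa_0\varkappa_m}\,R_{0,m}(z),
\]
which is exactly \eqref{eq:f1}.

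There is no real obstacle here: the identity is a one-line consequence of the general relation between the resolvents of $A^*A$ and $AA^*$ once the correct factorisation $A=\sqrt{\varkappa}L_\theta$ is identified. The only point requiring a moment of attention is the verification that $A^*\delta_n = \sqrt{\varkappa_n}\psi_n$, which uses the commutation of $L_\theta$ with the scalar $\sqrt{\varkappa_n}$ (as opposed to the full multiplication operator $\sqrt{\varkappa}$, which of course does not commute with $L_\theta$).
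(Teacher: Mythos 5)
Your proof is correct and follows essentially the same route as the paper: both establish the operator identity $I+z(\myQ_\varkappa-z)^{-1}=\sqrt{\varkappa}\,L_{\theta}(\calP_\varkappa-z)^{-1}L_{\theta}\sqrt{\varkappa}$ and then take the $(0,m)$ matrix element against the canonical basis, using $L_{\theta}\sqrt{\varkappa}\,\delta_n=\sqrt{\varkappa_n}\,\psi_n$. The only difference is cosmetic: you derive that identity directly from the intertwining relation for $A^*A$ and $AA^*$ with $A=\sqrt{\varkappa}\,L_{\theta}$, whereas the paper obtains it by extending the power identity \eqref{eq:e9} from polynomials to $f(x)=(x-z)^{-1}$ by a limiting argument.
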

\begin{proof}
By \eqref{eq:e9}, by taking linear combinations, we find
\[
{\myQ}_\varkappa f({\myQ}_\varkappa)
=
\sqrt{\varkappa}L_{\theta}f(\calP_\varkappa) L_{\theta}\sqrt{\varkappa}
\]
for any polynomial $f$. By a limiting argument, this extends to all continuous functions $f$. In particular, for $f(x)=(x-z)^{-1}$ we obtain 
\[
I+z({\myQ}_\varkappa-z)^{-1}
=
{\myQ}_\varkappa({\myQ}_\varkappa-z)^{-1}
=
\sqrt{\varkappa}L_{\theta}(\calP_\varkappa-z)^{-1} L_{\theta}\sqrt{\varkappa}.
\]
Applying this to $\delta_0$, evaluating the inner product with $\delta_m$ and recalling that $\psi_m=L_{\theta}\delta_m$, we obtain the required identity. 
\end{proof}

\subsection{Rank-one resolvent identities}
We first recall the standard rank-one resolvent identity. Writing 
\[
\calP_\varkappa=\varkappa_n\jap{\bigcdot,\psi_n}\psi_n+\calP_\varkappa^{\neq}, 
\qquad 
\calP_\varkappa^{\neq}=\sum_{k\not=n}\varkappa_k\jap{\bigcdot,\psi_k}\psi_k
\]
for a fixed $n\in\bbZ$ and 
\[
{R}^{\neq}(z)=({\calP}_\varkappa^{\neq}-z)^{-1},
\]
we have 
\begin{equation}
{R}(z)
=
{R}^{\neq}(z)
-
\frac{\varkappa_n\jap{\bigcdot,{R}^{\neq}(\overline{z})\psi_n}{R}^{\neq}(z)\psi_n}{1+\varkappa_n\jap{{R}^{\neq}(z)\psi_n,\psi_n}}.
\label{eq:f3}
\end{equation}

\begin{lemma}\label{lma:f2}
For any $n\in\bbZ$ ($n=0$ is not excluded) the dependence of $\widetilde{R}_{0,0}(z)$ on $\varkappa_n$ can be written as
\begin{equation}
\widetilde{R}_{0,0}(z)
=
\frac{a\varkappa_n-\beta}{\varkappa_n-\alpha},
\label{eq:f2}
\end{equation}
where the complex numbers $a,\alpha,\beta$ may depend on $z$ and on $\{\varkappa_k\}_{k\not=n}$. 
\end{lemma}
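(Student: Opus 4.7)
The plan is to combine two identities already established in this section: the affine relation
\[
\widetilde R_{0,0}(z) = \frac{\varkappa_0 R_{0,0}(z) - 1}{z}
\]
from Lemma~\ref{lma:f1}, and the rank-one resolvent identity \eqref{eq:f3}, which isolates the $\varkappa_n$-dependence of $R_{0,0}(z)$. Substitution produces a rational expression in $\varkappa_n$, and the task is to verify that it collapses to a Möbius function.

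First I would apply \eqref{eq:f3} at index $n$. Setting
\[
A:=R^{\neq}_{0,0}(z),\quad C:=\jap{R^{\neq}(z)\psi_n,\psi_n},\quad B:=\jap{\psi_0,R^{\neq}(\bar z)\psi_n}\jap{R^{\neq}(z)\psi_n,\psi_0},
\]
each independent of $\varkappa_n$, one obtains
\[
R_{0,0}(z) = A-\frac{\varkappa_n B}{1+\varkappa_n C}=\frac{\varkappa_n(AC-B)+A}{\varkappa_n C+1}.
\]
If $n\neq 0$, then $\varkappa_0$ is independent of $\varkappa_n$, and substitution into the formula from Lemma~\ref{lma:f1} yields a linear-over-linear function of $\varkappa_n$; dividing numerator and denominator by $C$ puts it into the required form $(a\varkappa_n-\beta)/(\varkappa_n-\alpha)$. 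Degenerate subcases (e.g.\ $C=0$) are absorbed into the same normal form or handled by a limiting argument.

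The delicate case is $n=0$, where at first sight one expects a quadratic-over-linear expression in $\varkappa_0$, since Lemma~\ref{lma:f1} contributes an explicit factor of $\varkappa_0$ and $R_{0,0}(z)$ also depends on $\varkappa_0$. The resolution is an algebraic identity: when $n=0$ the two vectors in \eqref{eq:f3} coincide, and the self-adjointness of $\calP_\varkappa^{\neq}$, via $R^{\neq}(\bar z)=R^{\neq}(z)^*$, gives $\jap{\psi_0,R^{\neq}(\bar z)\psi_0}=\jap{R^{\neq}(z)\psi_0,\psi_0}=A=C$. Hence $B=A^2=AC$, so $AC-B=0$, the $\varkappa_0^2$ coefficient vanishes, and $R_{0,0}(z)=A/(1+\varkappa_0 A)$. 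Feeding this into Lemma~\ref{lma:f1} gives $\widetilde R_{0,0}(z)=-1/(z(1+\varkappa_0 A))$, a Möbius function of $\varkappa_0$ with $a=0$.

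The main obstacle is precisely this $n=0$ case. Because $\myQ_\varkappa=\sqrt{\varkappa}L_\varphi\sqrt{\varkappa}$ is quadratic in $\sqrt{\varkappa_0}$, one might fear that $\widetilde R_{0,0}(z)$ is genuinely quadratic in $\varkappa_0$ and that the Möbius form fails for the self-index; the cancellation $AC-B=0$, a direct consequence of the Hermitian-square structure of $\calP_\varkappa$ combined with $\psi_n=\psi_0$, is what makes the Möbius form hold uniformly in $n$.
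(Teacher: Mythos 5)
Your proposal is correct and takes essentially the same route as the paper: combine the identity of Lemma~\ref{lma:f1} (with $m=0$) with the rank-one resolvent formula \eqref{eq:f3}, treating $n\neq0$ directly and, for $n=0$, using exactly the cancellation you describe (the numerator term is $(R^{\neq}_{0,0}(z))^2$, so $R_{0,0}(z)=R^{\neq}_{0,0}(z)/(1+\varkappa_0R^{\neq}_{0,0}(z))$ and the quadratic dependence disappears). The only refinement worth noting is that your ``degenerate subcase $C=0$'' never actually occurs: for $\Im z>0$ one has $\Im\jap{R^{\neq}(z)\psi_n,\psi_n}=\Im z\,\norm{R^{\neq}(z)\psi_n}^2>0$, so $R^{\neq}_{n,n}(z)\neq0$, which is precisely how the paper dismisses this point rather than by a limiting argument.
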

\begin{proof}
Taking $m=0$ in \eqref{eq:f1}, we find
\[
1+z\widetilde{R}_{0,0}(z)
=
\varkappa_0R_{0,0}(z).
\]
Using \eqref{eq:f3} and denoting for brevity
\[
{R}^{\neq}_{i,j}(z)=\jap{{R}^{\neq}(z)\psi_i,\psi_j},
\]
we find 
\begin{equation}
1+z\widetilde{R}_{0,0}(z)
=
\varkappa_0\left\{ {R}^{\neq}_{0,0}(z)-\frac{\varkappa_n R^{\neq}_{0,n}(z)R^{\neq}_{n,0}(z)}{1+\varkappa_n R^{\neq}_{n,n}(z)}\right\}.
\label{eq:f4}
\end{equation}
We note that for $\Im z>0$ the complex number $R^{\neq}_{n,n}(z)$ appearing in the denominator is non-zero, because it has a strictly positive imaginary part. If $n\not=0$, the relation \eqref{eq:f4} immediately yields the representation \eqref{eq:f2}. If $n=0$, we need to notice a cancellation in the numerator, which yields
\[
1+z\widetilde{R}_{0,0}(z)=\frac{\varkappa_0 R^{\neq}_{0,0}(z)}{1+\varkappa_0 R^{\neq}_{0,0}(z)};
\]
this is again in the required form \eqref{eq:f2}. 
\end{proof}

\begin{lemma}\label{lma:f3}
Consider $\widetilde{R}_{0,m}(z)$ for $m\not=0$.
\begin{enumerate}[\rm (i)]
\item
Let $n\not=0$ and $n\not=m$. Then the dependence of $\widetilde{R}_{0,m}(z)$ on $\varkappa_n$ can be written as
\begin{equation}
\widetilde{R}_{0,m}(z)
=
\frac{\gamma}{\varkappa_n-\alpha},
\label{eq:f5}
\end{equation}
where the complex numbers $\alpha,\gamma$ may depend on $z$ and on $\{\varkappa_k\}_{k\not=n}$. 
\item
The dependence of $\widetilde{R}_{0,m}(z)$ on $\varkappa_0$ can be written as
\begin{equation}
\widetilde{R}_{0,m}(z)
=
\frac{\gamma\sqrt{\varkappa_0}}{\varkappa_0-\alpha},
\label{eq:f6}
\end{equation}
where the complex numbers $\alpha,\gamma$ may depend on $z$ and on $\{\varkappa_k\}_{k\not=0}$. 
\end{enumerate}
\end{lemma}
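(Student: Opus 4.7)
My plan is to prove both parts by an explicit low-rank perturbation argument that tracks the dependence of $\widetilde{R}_{0,m}(z)$ on the single random variable $\varkappa_n$. The key structural fact is that $\varkappa_n$ enters $\myQ_\varkappa=\sqrt{\varkappa}L_\varphi\sqrt{\varkappa}$ only through the $n$-th row and column --- with the $(n,n)$ entry equal to $\varkappa_n$ and every off-diagonal entry in that row/column equal to a fixed multiple of $\sqrt{\varkappa_n}$ --- so a Schur-complement or rank-one resolvent manipulation produces only a very low-degree rational dependence on $\varkappa_n$.

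For part (ii), I would split $\ell^2(\bbZ) = \bbC\delta_0\oplus\delta_0^\perp$ and block-decompose
\[
\myQ_\varkappa - z = \begin{pmatrix}\varkappa_0 - z & \sqrt{\varkappa_0}\,c^* \\ \sqrt{\varkappa_0}\,c & M - z\end{pmatrix},
\]
where the vector $c\in\delta_0^\perp$ has entries $\sqrt{\varkappa_k}\widehat{\varphi}_k$ for $k\neq 0$ and $M$ is the compression of $\myQ_\varkappa$ to $\delta_0^\perp$; both $c$ and $M$ are independent of $\varkappa_0$. Since $\delta_m\in\delta_0^\perp$ for $m\neq 0$, the standard $2\times 2$ block-inversion formula then gives
\[
\widetilde{R}_{0,m}(z)=-\frac{\sqrt{\varkappa_0}\,\langle(M-z)^{-1}c,\delta_m\rangle}{(1-\mu)\,\varkappa_0 - z},\qquad \mu:=\langle(M-z)^{-1}c,c\rangle,
\]
which is precisely the claimed form $\gamma\sqrt{\varkappa_0}/(\varkappa_0-\alpha)$ with $\alpha=z/(1-\mu)$ and $\gamma=-\langle(M-z)^{-1}c,\delta_m\rangle/(1-\mu)$ independent of $\varkappa_0$.

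For part (i), I would follow the strategy already used in Lemma~\ref{lma:f2}: invoke Lemma~\ref{lma:f1}, which for $m\neq 0$ reads $z\widetilde{R}_{0,m}(z)=\sqrt{\varkappa_0\varkappa_m}\,R_{0,m}(z)$ with prefactor $\sqrt{\varkappa_0\varkappa_m}$ independent of $\varkappa_n$ (since $n\neq 0,m$), and then apply the rank-one resolvent identity \eqref{eq:f3} at index $n$ to express $R_{0,m}(z)$ in terms of the resolvent $R^{\neq n}$ of $\calP_\varkappa$ with the coefficient $\varkappa_n$ removed. This immediately yields a Möbius-type expression in $\varkappa_n$. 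The final step is to recognise the special algebraic cancellation, induced by the compound factorisation $\calP_\varkappa=L_\theta\varkappa L_\theta$ and the consequent identities between the off-diagonal entries of $R^{\neq n}$, that collapses this Möbius fraction into the single-pole form $\gamma/(\varkappa_n-\alpha)$ with $\gamma,\alpha$ depending on $z$ and on $\{\varkappa_k\}_{k\neq n}$.

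The main obstacle I foresee is precisely this collapse in part (i): isolating the algebraic identity among the entries $R^{\neq n}_{0,n}$, $R^{\neq n}_{n,m}$, $R^{\neq n}_{0,m}$ and $R^{\neq n}_{n,n}$ that removes the constant-at-infinity contribution of the Möbius transformation. I would try to derive it by reapplying Lemma~\ref{lma:f1} to $\calP_\varkappa^{\neq n}$ (so that $\sqrt{\varkappa_n^{\neq n}}=0$ forces certain off-diagonal entries to vanish) and exploiting the shift invariance from Assumption~\ref{ass:1}(c). Once this identity is extracted, both (i) and (ii) reduce to short block-inversion calculations organised around the relevant Schur complement.
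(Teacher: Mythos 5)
Your part (ii) is correct and complete: the Schur complement with respect to $\bbC\delta_0\oplus\delta_0^\perp$ gives \eqref{eq:f6} directly (you should just add the remark that $1-\mu\neq 0$ for $\Im z>0$, since $\Im\jap{(M-z)^{-1}c,c}>0$ unless $c=0$, in which case $\widetilde{R}_{0,m}=0$ and the claim is trivial). This is a genuinely different, and arguably cleaner, route than the paper's, which instead combines \eqref{eq:f1} with the rank-one identity \eqref{eq:f3} taken at $n=0$ and uses the cancellation of the common factor $R^{\neq}_{0,m}$ to get $z\widetilde{R}_{0,m}(z)=\sqrt{\varkappa_0\varkappa_m}\,R^{\neq}_{0,m}(z)/\bigl(1+\varkappa_0 R^{\neq}_{0,0}(z)\bigr)$; both arguments yield \eqref{eq:f6}.

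For part (i), the obstacle you flag is not merely the hard step of the proof: the ``collapse'' you hope for does not occur, so this part of the plan cannot be completed as written. The cancellation you would need is exactly the identity $R^{\neq}_{0,m}R^{\neq}_{n,n}=R^{\neq}_{0,n}R^{\neq}_{n,m}$, and it fails in general. Indeed, repeating your own Schur-complement argument, now with respect to $\bbC\delta_n\oplus\delta_n^\perp$ (all of the $\varkappa_n$-dependence of $\myQ_\varkappa$ sits in that row and column), gives for $0,m\neq n$
\begin{equation*}
\widetilde{R}_{0,m}(z)=A+\frac{\varkappa_n B}{(1-\mu)\varkappa_n-z},
\end{equation*}
with $A$, $B$, $\mu$ independent of $\varkappa_n$: a M\"obius function of $\varkappa_n$ whose value at $\varkappa_n=\infty$ equals $A+B/(1-\mu)$ and is nonzero in general (this can be checked by hand on $3\times3$ matrices $\sqrt{\varkappa}\,T\sqrt{\varkappa}$ with $T$ tridiagonal, unit diagonal and small off-diagonal entries, taking $m=1$, $n=2$; or perturbatively for $\varphi=1+2\eps\cos k$, where $A=O(\eps)$ while $B=O(\eps^3)$). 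A pure pole $\gamma/(\varkappa_n-\alpha)$ would force that value to vanish, so no amount of re-applying Lemma~\ref{lma:f1} to $\calP^{\neq}_\varkappa$ or invoking shift invariance can supply the missing identity, which would have to hold pointwise for every fixed realisation of $\{\varkappa_k\}_{k\neq n}$. For comparison, the paper's own proof does no more than you did: it combines \eqref{eq:f1} and \eqref{eq:f3} and asserts \eqref{eq:f5} ``immediately'', but what that computation actually delivers in case (i) is only the M\"obius form $(a\varkappa_n-\beta)/(\varkappa_n-\alpha)$ of \eqref{eq:f2}. That weaker form is all that is needed downstream, since the decoupling inequality \eqref{eq:f7} is stated precisely for such expressions and Lemma~\ref{lma:f5} then goes through unchanged. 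The constructive fix is therefore to prove (and use) case (i) in the M\"obius form, which your derivation already yields, rather than to chase the pure-pole collapse.
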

\begin{proof}
Combining \eqref{eq:f1} and \eqref{eq:f3}, we find
\[
z\widetilde{R}_{0,m}(z)
=
\sqrt{\varkappa_0\varkappa_m}\left\{
R_{0,m}^{\neq}-\frac{\varkappa_nR_{0,n}^{\neq}(z)R_{n,m}^{\neq}(z)}{1+\varkappa_nR_{n,n}^{\neq}(z)}
\right\}.
\]
Again, $R_{n,n}^{\neq}(z)\not=0$ in the denominator. From here we immediately obtain \eqref{eq:f5}, \eqref{eq:f6}. 
\end{proof}

\subsection{Decoupling inequalities}
The following statement is \cite[Theorem~8.8]{Aiz-War}, with minor modifications. 
\begin{lemma}
Assume that the distribution $\bbP_0$ satisfies \eqref{eq:g}, and let $s\in(0,\tau/2)$.
There exists a constant $A$ which depends only on $\bbP_0$ and on $s$, such that for any complex numbers $a$, $\alpha$, $\beta$, $z$, we have 
\begin{align}
\int_\bbR\Abs{\frac{a\kappa-\beta}{\kappa-\alpha}}^s\frac{1}{\abs{\kappa-z}^s}\dd\bbP_0(\kappa)
&\leq
A
\int_\bbR\Abs{\frac{a\kappa-\beta}{\kappa-\alpha}}^s\dd\bbP_0(\kappa)
\int_\bbR\frac{1}{\abs{\kappa-z}^s}\dd\bbP_0(\kappa),
\label{eq:f7}
\\
\int_\bbR\frac{\kappa^{s/2}}{\abs{\kappa-\alpha}^s}\frac{1}{\abs{\kappa-z}^s}\dd\bbP_0(\kappa)
&\leq
A
\int_\bbR\frac{\kappa^{s/2}}{\abs{\kappa-\alpha}^s}\dd\bbP_0(\kappa)
\int_\bbR\frac{1}{\abs{\kappa-z}^s}\dd\bbP_0(\kappa).
\label{eq:f8}
\end{align}
\end{lemma}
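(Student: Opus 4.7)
I would treat both inequalities as special cases of the Aizenman--Molchanov decoupling principle of \cite[Theorem~8.8]{Aiz-War}: for M\"obius functions $\phi_1, \phi_2$ of $\kappa \in \bbR$ and a measure $\bbP_0$ satisfying \eqref{eq:g}, one has
\[
\int_\bbR \abs{\phi_1 \phi_2}^s \dd\bbP_0 \leq A \int_\bbR \abs{\phi_1}^s \dd\bbP_0 \cdot \int_\bbR \abs{\phi_2}^s \dd\bbP_0
\]
for $s \in (0, \tau/2)$, with $A$ depending only on $s$ and $\bbP_0$. The restriction $s < \tau/2$, rather than the weaker $s < \tau$, is forced by the coincident-pole regime $\zeta_1 \approx \zeta_2$, where the left-hand side reduces to $\int \abs{\kappa - \zeta}^{-2s}\dd\bbP_0$, whose boundedness requires $2s < \tau$.

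The starting point is the uniform upper bound
\[
I(w) := \int_\bbR \abs{\kappa - w}^{-s}\dd\bbP_0(\kappa) \leq C_0, \qquad w \in \bbC,
\]
valid for all $s < \tau$, which follows from \eqref{eq:g} by the layer-cake formula. The heart of the argument is the ``pure-poles'' decoupling
\[
\int_\bbR \frac{\dd\bbP_0(\kappa)}{\abs{\kappa - \zeta_1}^s \abs{\kappa - \zeta_2}^s} \leq A \cdot I(\zeta_1) I(\zeta_2), \qquad \zeta_1, \zeta_2 \in \bbC.
\]
To prove this I would split into two regimes. When $\abs{\zeta_1 - \zeta_2} \geq \delta_0$ for some fixed $\delta_0 > 0$ depending on $\supp \bbP_0$, apply the partial-fractions identity
\[
\frac{1}{(\kappa - \zeta_1)(\kappa - \zeta_2)} = \frac{1}{\zeta_1 - \zeta_2}\Bigl(\frac{1}{\kappa - \zeta_1} - \frac{1}{\kappa - \zeta_2}\Bigr)
\]
together with the subadditivity $\abs{u - v}^s \leq \abs{u}^s + \abs{v}^s$ (valid for $s \leq 1$) to bound the integral by $\delta_0^{-s}(I(\zeta_1) + I(\zeta_2))$; comparison with $I(\zeta_1) I(\zeta_2)$ then follows from the two-sided estimate $c(1+\abs{w})^{-s} \leq I(w) \leq C_0$ (the lower bound coming from continuity near $\supp \bbP_0$ and the leading asymptotics $I(w) \sim \abs{w}^{-s}$ far from it). When $\abs{\zeta_1 - \zeta_2} < \delta_0$, I invoke Cauchy--Schwarz to pass to integrals with exponent $2s$, which are uniformly bounded precisely because $s < \tau/2$. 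Inequality \eqref{eq:f8} is then an immediate consequence: on $\supp\bbP_0 \subset [\varkappa_{\min},\varkappa_{\max}]$ the factor $\kappa^{s/2}$ is pinched between $\varkappa_{\min}^{s/2}$ and $\varkappa_{\max}^{s/2}$, so \eqref{eq:f8} differs from the pure-poles estimate only by a multiplicative factor $(\varkappa_{\max}/\varkappa_{\min})^{s/2}$.

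For \eqref{eq:f7}, a further partial-fractions decomposition
\[
\frac{a\kappa - \beta}{(\kappa - \alpha)(\kappa - z)} = \frac{A_1}{\kappa - \alpha} + \frac{A_2}{\kappa - z}, \quad A_1 = \frac{a\alpha - \beta}{\alpha - z}, \quad A_2 = \frac{az - \beta}{z - \alpha},
\]
combined with subadditivity, bounds the left-hand side of \eqref{eq:f7} by $\abs{A_1}^s I(\alpha) + \abs{A_2}^s I(z)$. The main obstacle is the matching lower bound
\[
J := \int_\bbR \Abs{\frac{a\kappa - \beta}{\kappa - \alpha}}^s \dd\bbP_0 \geq c\bigl(\abs{a}^s + \abs{a\alpha - \beta}^s I(\alpha)\bigr),
\]
which I would prove by writing $\phi_1(\kappa) = a + (a\alpha - \beta)/(\kappa - \alpha)$ and splitting the integration at the crossover scale $r = \abs{a\alpha - \beta}/\abs{a}$ (handling $a = 0$ and $a\alpha = \beta$ separately as trivial cases): on $\abs{\kappa - \alpha} > 2r$ the constant term dominates and $\abs{\phi_1} \geq \abs{a}/2$, while on $\abs{\kappa - \alpha} < r/2$ the polar term dominates and $\abs{\phi_1} \geq \abs{a\alpha - \beta}/(2\abs{\kappa - \alpha})$. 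The H\"older regularity \eqref{eq:g}, together with a case distinction based on how $r$ compares to the diameter of $\supp\bbP_0$, ensures that at least one of these regions carries a definite share of $\bbP_0$-mass, yielding the two required pieces of the lower bound on $J$. Combining with the elementary estimates $\abs{A_j} \leq \abs{a} + \abs{a\alpha - \beta}/\abs{\alpha - z}$ and $I(z) \leq C_0$ converts $\abs{A_1}^s I(\alpha) + \abs{A_2}^s I(z)$ into $A \cdot J \cdot I(z)$, completing the proof of \eqref{eq:f7}.
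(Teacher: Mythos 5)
Your overall strategy --- reproving the Aizenman--Molchanov decoupling bound from scratch rather than quoting it --- is legitimate in principle, but two steps as written would fail. The decisive one concerns \eqref{eq:f7} in the coincident-pole regime $z\to\alpha$, precisely the regime you single out in your opening paragraph as the reason for the restriction $s<\tau/2$. Your partial-fraction coefficients $A_1,A_2$ both carry a factor $(\alpha-z)^{-1}$, so the intermediate bound $\abs{A_1}^sI(\alpha)+\abs{A_2}^sI(z)$ diverges like $\abs{\alpha-z}^{-s}$ as $z\to\alpha$, whereas both sides of \eqref{eq:f7} stay bounded there (the left-hand side by writing $\abs{a\kappa-\beta}^s\le C(\abs{a}^s+\abs{\beta}^s)$ on $\supp\bbP_0$ and applying Cauchy--Schwarz with $2s<\tau$). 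No finite constant $A$ can be extracted from a quantity that is infinite at $z=\alpha$, so \eqref{eq:f7} needs a separate argument in that regime (e.g. reducing to your pure-pole estimate and matching against a lower bound for $J$ in terms of $\abs{a}^s+\abs{\beta}^s$), and you never supply one.

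The second, more minor, issue is that your comparison steps in the pure-pole case implicitly assume $I(w)=\int\abs{\kappa-w}^{-s}\dd\bbP_0$ is bounded below by a positive constant. It is not: by your own two-sided estimate $I(w)\sim\abs{w}^{-s}$ far from $\supp\bbP_0$, so neither $\delta_0^{-s}(I(\zeta_1)+I(\zeta_2))\le A\,I(\zeta_1)I(\zeta_2)$ nor $\mathrm{const}\le A\,I(\zeta_1)I(\zeta_2)$ holds uniformly (take $\zeta_1$ near the support and $\zeta_2\to\infty$). The fix is routine --- when a pole is at distance $D\ge 1$ from $\supp\bbP_0$, pull the corresponding factor out of the integral and use $\sup_{\kappa\in\supp\bbP_0}\abs{\kappa-\zeta}^{-s}\le C\,I(\zeta)$ --- but it must be said. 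For comparison, the paper sidesteps all of this: it quotes \cite[Theorem~8.8]{Aiz-War} verbatim for \eqref{eq:f7} with $a=1$, reduces $a\neq 0$ to $a=1$ by homogeneity, and obtains the $a=0$ case and \eqref{eq:f8} by inserting or removing powers of $\kappa$, which are pinched between $\varkappa_{\min}$ and $\varkappa_{\max}$ on the support; that last trick is the one part of your argument that coincides with the paper's.
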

We note that all integrals here are in fact over $[\varkappa_{\min},\varkappa_{\max}]$. 
\begin{proof}
Relation \eqref{eq:f7} for $a=1$ is exactly the statement of \cite[Theorem~8.8]{Aiz-War} (in fact, the statement from \cite{Aiz-War} is more general as it applies to a more general class of measures $\bbP_0$). Of course, \eqref{eq:f7} for any $a\not=0$ immediately reduces to the case $a=1$. 

If $a=0$, one can prove \eqref{eq:f7} by following the steps of the proof in \cite{Aiz-War}, but it is quicker to deduce it from the already proven case. It suffices to take $\beta=1$; we have:
\begin{align*}
\int_\bbR\frac{1}{\abs{\kappa-\alpha}^s}\frac{1}{\abs{\kappa-z}^s}\dd\bbP_0(\kappa)
&\leq
\frac1{\varkappa_{\min}^s}
\int_\bbR\frac{\kappa^s}{\abs{\kappa-\alpha}^s}\frac{1}{\abs{\kappa-z}^s}\dd\bbP_0(\kappa)
\\
&\leq
A
\frac{1}{\varkappa_{\min}^s}
\int_\bbR\frac{\kappa^s}{\abs{\kappa-\alpha}^s}\dd\bbP_0(\kappa)
\int_\bbR\frac{1}{\abs{\kappa-z}^s}\dd\bbP_0(\kappa)
\\
&\leq
A\frac{\varkappa_{\max}^s}{\varkappa_{\min}^s}
\int_\bbR\frac{1}{\abs{\kappa-\alpha}^s}\dd\bbP_0(\kappa)
\int_\bbR\frac{1}{\abs{\kappa-z}^s}\dd\bbP_0(\kappa).
\end{align*}
In a similar way one proves \eqref{eq:f8} by deducing it from the already proven cases through ``inserting'' an extra factor of $\kappa^{s/2}$ into the integral. 
\end{proof}

This and Lemmas~\ref{lma:f2} and \ref{lma:f3} yield the following. 

\begin{lemma}\label{lma:f5}
For any $s\in(0,\tau/2)$ there exists a constant $A$ that depends only on $\bbP_0$ and on $s$ such that for any $n,m\in\bbZ$
\begin{equation}
\bbE\{\abs{\varkappa_n-z}^{-s}\abs{\widetilde{R}_{0,m}(z)}^s\}
\leq
A\, 
\bbE\{\abs{\varkappa_n-z}^{-s}\}\, 
\bbE\{\abs{\widetilde{R}_{0,m}(z)}^s\}.
\label{eq:f9}
\end{equation}
\end{lemma}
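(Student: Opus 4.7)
The plan is to fix $z$ with $\Im z>0$, condition on the $\sigma$-algebra generated by $\varkappa^{\neq n}:=\{\varkappa_k\}_{k\neq n}$, and reduce the bound to the one-variable decoupling inequalities \eqref{eq:f7} and \eqref{eq:f8}. The key input is Lemmas~\ref{lma:f2} and \ref{lma:f3}: they exhibit the dependence of $\widetilde{R}_{0,m}(z)$ on the single variable $\varkappa_n$ as an explicit rational (or half-rational) expression whose coefficients depend only on $z$ and on $\varkappa^{\neq n}$. Once such a representation is available, the inner integral over $\varkappa_n$ has exactly the shape of the left-hand side of \eqref{eq:f7} or \eqref{eq:f8}, so the required factorisation \eqref{eq:f9} follows by integrating the resulting bound over $\varkappa^{\neq n}$ and using that $\bbE\{\abs{\varkappa_n-z}^{-s}\}$ is a deterministic constant.

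The case analysis is dictated by Lemmas~\ref{lma:f2}--\ref{lma:f3}. If $m=0$ (any $n$), Lemma~\ref{lma:f2} gives $\widetilde{R}_{0,0}(z)=(a\varkappa_n-\beta)/(\varkappa_n-\alpha)$, and the inner integral is precisely the left-hand side of \eqref{eq:f7}. If $m\neq 0$ and $n\notin\{0,m\}$, Lemma~\ref{lma:f3}(i) gives $\widetilde{R}_{0,m}(z)=\gamma/(\varkappa_n-\alpha)$; I pull $\abs{\gamma}^s$ outside the $\varkappa_n$-integral and invoke \eqref{eq:f7} in its $a=0$ form (established inside the proof of the preceding lemma). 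If $m\neq 0$ and $n=0$, Lemma~\ref{lma:f3}(ii) gives $\widetilde{R}_{0,m}(z)=\gamma\sqrt{\varkappa_0}/(\varkappa_0-\alpha)$, and \eqref{eq:f8} applies after pulling out $\abs{\gamma}^s$.

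The one remaining case $n=m\neq 0$ is not directly covered by Lemma~\ref{lma:f3}, and I expect it to be the main obstacle. I plan to dispatch it by symmetry. The Hermitian structure of ${\myQ}_\varkappa$ yields $\abs{\widetilde{R}_{0,m}(z)}=\abs{\widetilde{R}_{m,0}(\bar z)}$, and the shift-covariance ${\myQ}_{T^m\varkappa}=U^m{\myQ}_\varkappa U^{*m}$, where $U\delta_n=\delta_{n-1}$, gives after a short computation
\[
\widetilde{R}_{m,0}(\bar z;\varkappa)=\widetilde{R}_{0,-m}(\bar z;T^m\varkappa),\qquad \varkappa_m=(T^m\varkappa)_0.
\]
Since $T^m\varkappa$ has the same law as $\varkappa$, taking expectations converts both $\bbE\{\abs{\varkappa_m-z}^{-s}\abs{\widetilde{R}_{0,m}(z)}^s\}$ and $\bbE\{\abs{\widetilde{R}_{0,m}(z)}^s\}$ into the corresponding quantities for the case $n=0$, $m\mapsto -m$, $z\mapsto\bar z$, which was already handled. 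Applying the bound proved in that case and translating back yields \eqref{eq:f9} for $n=m\neq 0$ and completes the proof.
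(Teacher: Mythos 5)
Your proposal is correct and follows essentially the same route as the paper: condition on $\{\varkappa_k\}_{k\neq n}$, use Lemmas~\ref{lma:f2}--\ref{lma:f3} to expose the rational dependence on $\varkappa_n$, and apply the one-variable decoupling inequalities \eqref{eq:f7}--\eqref{eq:f8}. Your symmetry argument for the subcase $n=m\neq 0$ (which the paper passes over with ``considered similarly'') is valid, though it can be bypassed: combining \eqref{eq:f1} with \eqref{eq:f3} at site $n=m$ produces the same cancellation as in the $n=0$ case of Lemma~\ref{lma:f2}, giving $z\widetilde{R}_{0,m}(z)=\sqrt{\varkappa_0\varkappa_m}\,R^{\neq}_{0,m}(z)/(1+\varkappa_m R^{\neq}_{m,m}(z))$, i.e. directly the form $\gamma\sqrt{\varkappa_m}/(\varkappa_m-\alpha)$ handled by \eqref{eq:f8}.
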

\begin{proof}
Suppose $m=0$; by Lemma~\ref{lma:f2} we have 
\[
\bbE\{\abs{\varkappa_n-z}^{-s}\abs{\widetilde{R}_{0,0}(z)}^s\}
=
\bbE\left\{\abs{\varkappa_n-z}^{-s}\Abs{\frac{a\varkappa_n-\beta}{\varkappa_n-\alpha}}^s\right\},
\]
where $a$, $\alpha$, $\beta$ depend on $z$ and on $\{\varkappa_k\}_{k\not=n}$. Taking the expectation with respect to $\varkappa_n$ and using \eqref{eq:f7}, and subsequently taking the expectation with respect to the rest of the variables, we obtain 
\begin{align*}
\bbE\left\{\abs{\varkappa_n-z}^{-s}\Abs{\frac{a\varkappa_n-\beta}{\varkappa_n-\alpha}}^s\right\}
&\leq
A\, 
\bbE\left\{\abs{\varkappa_n-z}^{-s}\right\}\, 
\bbE\left\{\Abs{\frac{a\varkappa_n-\beta}{\varkappa_n-\alpha}}^s\right\}
\\
&=
A\, \bbE\left\{\abs{\varkappa_n-z}^{-s}\right\}\, 
\bbE\{\abs{\widetilde{R}_{0,0}(z)}^s\},
\end{align*}
which proves \eqref{eq:f9} for $m=0$. The case $m\not=0$ is considered similarly by using Lemma~\ref{lma:f3} in place of Lemma~\ref{lma:f2}. 
\end{proof}
\begin{remark*}
The quantity 
\begin{equation}
D_s(z):=\bbE\{\abs{\varkappa_n-z}^{-s}\}
\label{eq:f16}
\end{equation}
can be bounded uniformly in $z\in\bbC$ by a constant that depends only on $s$ and $\bbP_0$. Thus, this quantity can be absorbed into the constant $A$ in the above lemma if desired. 
\end{remark*}

\subsection{The core of the proof: finiteness of the fractional moments}
In what follows for definiteness we take $s=\tau/4$ in Lemma~\ref{lma:f5}. 

\begin{lemma}\label{lma:f6}
Under the hypothesis of Theorem~\ref{thm:aa6}, there exists $\eta>0$ such that for any $\varphi$ satisfying \eqref{eq:g1}, we have 
\begin{equation}
\sup_{\Im z>0}\bbE\left\{\sum_{m\in\bbZ}\abs{\widetilde{R}_{0,m}(z)}^{\tau/4}\right\}<\infty.
\label{eq:f10}
\end{equation}
\end{lemma}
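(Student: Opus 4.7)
The plan is to apply the fractional moment method of Aizenman--Molchanov, exploiting the decomposition $\myQ_\varkappa = \varkappa + \sqrt{\varkappa}L_\phi\sqrt{\varkappa}$ from \eqref{eq:f11} and treating the off-diagonal piece as a small perturbation under the hypothesis \eqref{eq:g1}. The starting point is the resolvent identity
\[
\widetilde{R}(z) = (\varkappa - z)^{-1} - (\varkappa - z)^{-1}\sqrt{\varkappa}L_\phi\sqrt{\varkappa}\widetilde{R}(z),
\]
whose $(0,m)$ matrix element, together with $\widehat{\phi}_0 = 0$, yields
\[
\widetilde{R}_{0,m}(z) = \frac{\delta_{0,m}}{\varkappa_0 - z} - \frac{\sqrt{\varkappa_m}}{\varkappa_m - z}\sum_{k \neq m}\widehat{\phi}_{m-k}\sqrt{\varkappa_k}\widetilde{R}_{0,k}(z).
\]

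Raising to the $s$-th power with $s = \tau/4 \in (0,1]$, using $|a+b|^s \leq |a|^s + |b|^s$ termwise, taking expectations, and applying Lemma~\ref{lma:f5} with decoupling variable $\varkappa_m$ (legitimate term by term, since $k \neq m$ places each $\widetilde{R}_{0,k}(z)$ in the scope of Lemmas~\ref{lma:f2}--\ref{lma:f3}), together with $\varkappa_m^{s/2}\varkappa_k^{s/2} \leq \varkappa_{\max}^s$, one obtains the key bound
\[
f(m) \leq D_s(z)\delta_{0,m} + A\varkappa_{\max}^s D_s(z)\sum_{k \neq m}|\widehat{\phi}_{m-k}|^s f(k), \qquad f(m) := \bbE\{|\widetilde{R}_{0,m}(z)|^s\},
\]
with $D_s(z)$ uniformly bounded in $z$ by the remark after Lemma~\ref{lma:f5}. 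Summing over $m$ and interchanging sums gives $\sum_m |\widehat{\phi}_{m-k}|^s = \sum_{n \neq 0}|\widehat{\varphi}_n|^s$, yielding the self-bounding inequality
\[
\sum_m f(m) \leq D_s(z) + A\varkappa_{\max}^s D_s(z)\left(\sum_{n \neq 0}|\widehat{\varphi}_n|^s\right)\sum_m f(m),
\]
which closes into $\sum_m f(m) \leq 2\sup_z D_s(z)$ provided $\eta$ is chosen so that $A\varkappa_{\max}^s (\sup_z D_s(z))\, \eta \leq \tfrac{1}{2}$.

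The main obstacle is that the absorption step is a priori illegitimate: without knowing $\sum_m f(m) < \infty$ in advance, one cannot subtract it from both sides. I would resolve this by a finite-volume regularisation: consider $\myQ_\varkappa^{[N]} := 1_N \myQ_\varkappa 1_N$ viewed as an operator on $\operatorname{Ran} 1_N \cong \bbC^{2N+1}$, with resolvent matrix elements $\widetilde{R}^{[N]}_{0,m}(z)$ vanishing outside $[-N,N]$. The rank-one decoupling structure in Lemmas~\ref{lma:f2}--\ref{lma:f3} carries over verbatim to this finite matrix (both lemmas rest only on the rank-one perturbation identity and are dimension-agnostic), so the above self-bounding inequality holds for $f_N(m) := \bbE\{|\widetilde{R}^{[N]}_{0,m}(z)|^s\}$ with all sums finite from the outset, producing $\sum_{|m|\leq N} f_N(m) \leq 2\sup_z D_s(z)$ uniformly in $N$. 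Strong resolvent convergence $\widetilde{R}^{[N]}(z) \to \widetilde{R}(z)$, combined with the uniform pointwise bound $|\widetilde{R}^{[N]}_{0,m}|^s \leq (\Im z)^{-s}$ and dominated convergence, gives $f_N(m) \to f(m)$ for each fixed $m$; Fatou's lemma on the sum then delivers \eqref{eq:f10}.
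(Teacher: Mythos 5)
Your proposal is correct and follows essentially the same route as the paper: the same resolvent identity for the splitting $\myQ_\varkappa=\varkappa+\sqrt{\varkappa}L_\phi\sqrt{\varkappa}$, the bound $\abs{a+b}^s\leq\abs{a}^s+\abs{b}^s$ with $s=\tau/4$, termwise decoupling via Lemma~\ref{lma:f5}, and the same self-bounding inequality closed by the smallness condition on $\eta$. The only point where you go beyond the paper is the finite-volume regularisation justifying the absorption step (the paper rearranges to \eqref{eq:f15} directly, leaving the a priori finiteness of $\sum_n\bbE\{\abs{\widetilde{R}_{0,n}(z)}^{\tau/4}\}$ implicit); your fix is sound, since $1_N\myQ_\varkappa 1_N=YY^*$ with $Y=1_N\sqrt{\varkappa}L_\theta$ and $Y^*Y=\calP_\varkappa^{(N)}$, so the analogues of Lemmas~\ref{lma:f1}--\ref{lma:f3} and hence of Lemma~\ref{lma:f5} do carry over to the truncated operator, making this a slightly more careful completion of the same argument.
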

We note that by the shift invariance, we can replace the index ``0" in \eqref{eq:f10} by any integer. 
\begin{proof}
We start with the resolvent identity for the decomposition \eqref{eq:f11}: 
\[
\widetilde{R}(z)=(\varkappa-z)^{-1}-(\varkappa-z)^{-1}\sqrt{\varkappa}L_\phi\sqrt{\varkappa}\widetilde{R}(z). 
\]
Writing this in terms of the matrix elements of operators, for the matrix element with indices $0,n$ we obtain 
\begin{align*}
\widetilde{R}_{0,n}(z)
=\delta_{0,n}(\varkappa_0-z)^{-1}-\sum_{m\not=n} (\varkappa_n-z)^{-1}\sqrt{\varkappa_n}\widehat{\phi}_{n-m}\sqrt{\varkappa_m}\widetilde{R}_{0,m}(z). 
\end{align*}
We recall that by our assumptions, $\phi\in C^\infty$ and so the series converges absolutely due to the fast decay of the Fourier coefficients $\widehat{\phi}_m$. Using $\abs{a+b}^s\leq\abs{a}^s+\abs{b}^s$, $s=\tau/4$, from here we obtain 
\begin{align*}
\abs{\widetilde{R}_{0,n}(z)}^{\tau/4}
\leq
\delta_{0,n}\abs{\varkappa_0-z}^{-\tau/4}
+
\varkappa_{\max}^{\tau/4}\sum_{m\not=n} \abs{\widehat{\phi}_{n-m}}^{\tau/4} \abs{\varkappa_n-z}^{-\tau/4}\abs{\widetilde{R}_{0,m}(z)}^{\tau/4}. 
\end{align*}
Taking expectations, using Lemma~\ref{lma:f5} and the notation $D_s(z)$ (see \eqref{eq:f16}), we find 
\begin{align*}
\bbE\{\abs{\widetilde{R}_{0,n}(z)}^{\tau/4}\}
\leq
\delta_{0,n}D_{\tau/4}(z)
+
\varkappa_{\max}^{\tau/4}A D_{\tau/4}(z) \sum_{m\not=n} \abs{\widehat{\phi}_{n-m}}^{\tau/4} \bbE\{\abs{\widetilde{R}_{0,m}(z)}^{\tau/4}\}. 
\end{align*}
Summing over $n$, we find 
\begin{align*}
\sum_{n\in\bbZ}\bbE\{\abs{\widetilde{R}_{0,n}(z)}^{\tau/4}\}
\leq
D_{\tau/4}(z)
+
\varkappa_{\max}^{\tau/4}A D_{\tau/4}(z) \sum_{m\in\bbZ}\bbE\{\abs{\widetilde{R}_{0,m}(z)}^{\tau/4}\} \sum_{n\not=m} \abs{\widehat{\phi}_{n-m}}^{\tau/4} . 
\end{align*}
Rearranging, we find 
\begin{equation}
\left(1-\varkappa_{\max}^{\tau/4}A D_{\tau/4}(z)\sum_{k\in\bbZ} \abs{\widehat{\phi}_{k}}^{\tau/4}\right)
\sum_{n\in\bbZ}\bbE\{\abs{\widetilde{R}_{0,n}(z)}^{\tau/4}\}
\leq
D_{\tau/4}(z).
\label{eq:f15}
\end{equation}
Recall that $\sup_{z\in\bbC}D_{\tau/4}(z)<\infty$.  Denote 
\[
\eta=\frac1{2\varkappa_{\max}^{\tau/4}A \sup_{z\in\bbC}D_{\tau/4}(z)}; 
\]
then for all $\varphi$ satisfying $\sum\abs{\widehat{\phi}_{k}}^{\tau/4}<\eta$ the expression in parentheses on the left-hand side of \eqref{eq:f15} remains positive, which yields the desired estimate \eqref{eq:f10}. 
\end{proof}

\subsection{The Simon--Wolff criterion}
As in \cite{Aiz-Mol}, we rely on the Simon--Wolff criterion, which we state here for clarity. Let $A$ be a self-adjoint operator in a Hilbert space, and let $\psi$ be a cyclic element of $A$. Assume that for Lebesgue-a.e. $\lambda$ in some interval $\Delta$, we have 
\[
\lim_{\eps\to0_+}\norm{(A-\lambda-\ii\eps)^{-1}\psi}^2<\infty
\]
(the limit always exists because the norm increases as $\eps\to0_+$). Then for a.e. $\kappa\in\bbR$, the operator $A+\kappa\jap{\bigcdot,\psi}\psi$ has only point spectrum in the interval $\Delta$. 

The assumption of cyclicity of $\psi$ is not necessary here. If $\psi$ is not cyclic, then the conclusion applies to the restriction of $A+\kappa\jap{\bigcdot,\psi}\psi$ onto the cyclic subspace generated by $\psi$. (We recall that the following two subspaces coincide: (i) the cyclic subspace of $A$, generated by $\psi$ and (ii) the cyclic subspace of $A+\kappa\jap{\bigcdot,\psi}\psi$ generated by $\psi$.)

\subsection{The endgame: completing the proof}

We fix $\lambda_{\max}>0$ such that the interval $[0,\lambda_{\max}]$ contains the deterministic spectrum of $\calP_\varkappa$ for all sufficiently small $\alpha$. For example, we can take $\lambda_{\max}=2\varkappa_{\max}$. 
\begin{lemma}
For all $\varphi$ as in Lemma~\ref{lma:f6} and all $n\in\bbZ$ we have 
\begin{equation}
\sup_{0<\lambda<\lambda_{\max}}\sup_{0<\eps<1}
\bbE\left\{\norm{R(\lambda+\ii\eps)\psi_n}^{\tau/4}\right\}<\infty.
\label{eq:f12}
\end{equation}
\end{lemma}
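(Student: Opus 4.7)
The plan is to transfer the fractional-moment bound of Lemma~\ref{lma:f6} from the matrix elements of $\widetilde R(z) = ({\myQ}_\varkappa - z)^{-1}$ to the norm $\norm{R(z)\psi_n}$. By shift invariance of $L_\theta$ and the i.i.d.\ structure of $\varkappa$, the distribution of $\norm{R(z)\psi_n}^{\tau/4}$ under $\bbE$ is independent of $n$, so it suffices to prove the estimate for $n=0$.

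First I would apply the standard identity $R(z)^*R(z) = \eps^{-1}\Im R(z)$ (valid for any self-adjoint operator at $z = \lambda + \ii\eps$) to get $\norm{R(z)\psi_0}^2 = \eps^{-1}\Im R_{0,0}(z)$. Lemma~\ref{lma:f1} with $m=0$ gives $\varkappa_0 R_{0,0}(z) = 1 + z\widetilde R_{0,0}(z)$; taking imaginary parts with $z = \lambda + \ii\eps$ and invoking the analogous identity $\Im\widetilde R_{0,0}(z) = \eps \sum_{m}|\widetilde R_{0,m}(z)|^2$ for the ergodic operator ${\myQ}_\varkappa$ leads to
\[
\norm{R(z)\psi_0}^2 = \frac{1}{\varkappa_0}\Bigl(\lambda\sum_{m\in\bbZ}|\widetilde R_{0,m}(z)|^2 + \Re\widetilde R_{0,0}(z)\Bigr).
\]
The bounds $\varkappa_0 \geq \varkappa_{\min}$, $\lambda \leq \lambda_{\max}$ and $\Re \widetilde R_{0,0}(z)\leq |\widetilde R_{0,0}(z)|$ then produce an upper bound whose two summands are exactly of the type controlled in Lemma~\ref{lma:f6}.

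Next I would raise to the power $\tau/8 \in (0,1/8]$ and apply in succession $(a+b)^s \leq a^s + b^s$ and $(\sum_m a_m)^s \leq \sum_m a_m^s$, both valid for $s\in(0,1]$, with $a_m = |\widetilde R_{0,m}(z)|^2$ and $s = \tau/8$; this yields
\[
\norm{R(z)\psi_0}^{\tau/4} \leq \varkappa_{\min}^{-\tau/8}\Bigl(\lambda_{\max}^{\tau/8}\sum_{m\in\bbZ}|\widetilde R_{0,m}(z)|^{\tau/4} + |\widetilde R_{0,0}(z)|^{\tau/8}\Bigr).
\]
Taking expectation, the sum is bounded uniformly in $z$ by Lemma~\ref{lma:f6}, and the remaining term is controlled by the single-$m$ case of the same lemma together with the pointwise inequality $x^{\tau/8}\leq 1 + x^{\tau/4}$ for $x\geq 0$. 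This gives \eqref{eq:f12} uniformly in $\lambda \in (0,\lambda_{\max})$ and $\eps \in (0,1)$.

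The main obstacle is isolating the identity in the first display: Lemma~\ref{lma:f6} controls $\widetilde R_{0,m}(z)$, but what the Simon--Wolff criterion eventually needs is $R(z)\psi_n$, and these live on opposite sides of the unitary equivalence of Section~\ref{sec:bbb2}. Once the bridge is in place, everything else is a routine chain of subadditivity inequalities; no issue arises as $\lambda \to 0_+$ since the $\lambda$-weighted term is simply dominated by $\lambda_{\max}$ times the full sum, and positivity of $\norm{R(z)\psi_0}^2$ automatically forces $\Re \widetilde R_{0,0}(z)$ to have the right sign to close the estimate.
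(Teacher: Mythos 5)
Your argument is correct, but it takes a genuinely different route from the paper's. The paper passes from $\widetilde{R}(z)$ to $R(z)$ at the level of vectors: applying \eqref{eq:f1} to $\delta_0$ gives $\delta_0+z\widetilde{R}(z)\delta_0=\sqrt{\varkappa_0}\,\sqrt{\varkappa}L_\theta R(z)\psi_0$, and then one inverts $L_\theta$ (using that \eqref{eq:g1} with small $\eta$ forces $\varphi\geq 1/4$, hence $\norm{L_\theta^{-1}}\leq 2$) and divides by $\sqrt{\varkappa}\geq\sqrt{\varkappa_{\min}}$ to get $\norm{R(z)\psi_0}\leq\tfrac{2}{\varkappa_{\min}}(1+\abs{z}\norm{\widetilde{R}(z)\delta_0})$; raising to the power $\tau/4$ and using \eqref{eq:f14} (itself obtained from \eqref{eq:f10} by $(\abs{a}^2+\abs{b}^2)^{\tau/8}\leq\abs{a}^{\tau/4}+\abs{b}^{\tau/4}$) finishes. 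You instead work with the scalar identity $\varkappa_0 R_{0,0}(z)=1+z\widetilde{R}_{0,0}(z)$ and the two relations $\norm{R(z)\psi_0}^2=\eps^{-1}\Im R_{0,0}(z)$ and $\Im\widetilde{R}_{0,0}(z)=\eps\norm{\widetilde{R}(z)\delta_0}^2$, which yields the exact formula
\[
\norm{R(z)\psi_0}^2=\frac1{\varkappa_0}\Bigl(\lambda\norm{\widetilde{R}(z)\delta_0}^2+\Re\widetilde{R}_{0,0}(z)\Bigr),
\]
after which the same subadditivity chain and Lemma~\ref{lma:f6} apply (note that replacing $1/\varkappa_0$ by $1/\varkappa_{\min}$ is legitimate precisely because the bracket equals $\varkappa_0\norm{R(z)\psi_0}^2\geq0$). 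The trade-offs: your bridge avoids invoking the bounded invertibility of $L_\theta$ altogether, so it would survive even if $\varphi$ were not bounded away from zero, and it keeps everything at the level of the $(0,m)$ matrix elements already controlled; the paper's version is shorter and avoids the imaginary-part bookkeeping, at the cost of the extra (but in this regime harmless) hypothesis $\theta\geq1/2$. Both proofs reduce to Lemma~\ref{lma:f6} and both dispose of general $n$ by shift invariance, so I see no gap in your argument.
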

\begin{proof}
We start with \eqref{eq:f10}. Using $(\abs{a}^2+\abs{b}^2)^{\tau/8}\leq \abs{a}^{\tau/4}+\abs{b}^{\tau/4}$, we find 
\[
\sup_{\Im z>0}\bbE\left\{\left(\sum_{m\in\bbZ}\abs{\widetilde{R}_{0,m}(z)}^2\right)^{\tau/8}\right\}<\infty.
\]
Recalling the definition of $\widetilde{R}_{0,m}(z)$ in \eqref{eq:ress}, we can rewrite this as 
\begin{equation}
\sup_{\Im z>0}\bbE\left\{\norm{\widetilde{R}(z)\delta_0}^{\tau/4}\right\}<\infty.
\label{eq:f14}
\end{equation}
Now we need to pass from estimates on $\widetilde{R}(z)$ to estimates on $R(z)$. We apply the identity \eqref{eq:f1} to $\delta_0$:
\[
\delta_0+z\widetilde{R}(z)\delta_0=\sqrt{\varkappa}_0 \sqrt{\varkappa}L_{\theta}R(z)\psi_0.
\]
We recall that $\theta=\sqrt{\varphi}=\sqrt{1+\alpha\phi}$ and in particular for all sufficiently small $\abs{\alpha}$ we have $\theta\geq1/2$, which implies that $L_{\theta}$ has a bounded inverse with $\norm{L_{\theta}^{-1}}\leq 2$. This yields
\begin{align*}
\norm{R(z)\psi_0}
&\leq 
\frac{2}{\varkappa_{\min}}\norm{\sqrt{\varkappa}L_{\theta}R(z)\psi_0}
=
\frac{2}{\varkappa_{\min}}\norm{\delta_0+z\widetilde{R}(z)\delta_0}
\\
&\leq
\frac{2}{\varkappa_{\min}}\left(1+\abs{z}\norm{\widetilde{R}(z)\delta_0}\right),
\end{align*}
and therefore
\[
\norm{R(z)\psi_0}^{\tau/4}\leq C(1+\abs{z}^{\tau/4}\norm{\widetilde{R}(z)\delta_0}^{\tau/4}).
\]
Taking expectation and using \eqref{eq:f14}, we find 
\[
\sup_{\Im z >0}\bbE\left\{\norm{R(z)\psi_0}^{\tau/4}\right\}<\infty.
\]
Of course, $\psi_0$ here can be replaced by $\psi_n$ with any $n\in\bbZ$, and we obtain the desired statement \eqref{eq:f12}. 
\end{proof}

\begin{proof}[Proof of Theorem~\ref{thm:aa6}]
We write \eqref{eq:f12} as
\[
\sup_{0<\eps<1}
\bbE\left\{\norm{R(\lambda+\ii\eps)\psi_n}^{\tau/4}\right\}\leq C,
\]
where $C$ is finite and independent on $\lambda\in(0,\lambda_{\max})$ or on $n$. Since $\norm{R(\lambda+\ii\eps)\psi_n}$ is a decreasing function of $\eps>0$, we can write this relation as 
\[
\lim_{\eps\to0_+}
\bbE\left\{\norm{R(\lambda+\ii\eps)\psi_n}^{\tau/4}\right\}\leq C.
\]
Now by Fatou's lemma, this yields 
\[
\bbE\left\{\lim_{\eps\to0_+}\norm{R(\lambda+\ii\eps)\psi_n}^{\tau/4}\right\}\leq C.
\]
In particular, this means that for every $\lambda\in(0,\lambda_{\max})$ and a.s. in $\varkappa$, the limit 
\[
\lim_{\eps\to0_+}\norm{R(\lambda+\ii\eps)\psi_n}
\]
is finite. Applying Fubini's theorem, we see that a.s. in $\varkappa$ and for Lebesgue a.e. $\lambda\in(0,\lambda_{\max})$, the limit is finite. By the Simon--Wolff criterion, this implies that for every $n\in\bbZ$ and a.s. in $\varkappa$, the spectrum of $\calP_\varkappa$ in the cyclic subspace generated by $\psi_n$ is pure point. Since the elements $\{\psi_n\}_{n\in\bbZ}$ span the range of $\calP_\varkappa$, this completes the proof. 
\end{proof}

\section*{Acknowledgements}
The authors are grateful to Yuri Lyubarskii and Alexander Sobolev for useful discussions.

\appendix 

\section{Ergodic operators -- a summary}

Here we very briefly recall some necessary statements from  \cite{Aiz-War,Pa-Fi:92} on spectral theory of ergodic operators. 

\subsection{General ergodic operators}\label{sec:A1}
Let $(\Omega,\calF,\bbP)$ be a probability space, where $\Omega$ is a set, $\calF$ is a sigma-algebra of measurable subsets on $\Omega$ and $\bbP$ is a probability measure defined on $\calF$. If a statement holds for $\bbP$-a.e. value of $\omega\in\Omega$, we will say that it holds \emph{almost surely} (or a.s.). We denote by $\bbE$ the operation of taking expectation with respect to $\bbP$.

Let $T$ be a measure-preserving automorphism of $\Omega$. We assume that $T$ is \emph{ergodic}, i.e. if $X\subset\Omega$ is measurable and $T(X)=X$, then $\bbP(X)=0$ or $\bbP(X)=1$. If a measurable function $f:\Omega\to\bbR$ is invariant under $T$, i.e. $f\circ T=f$, then $f$ is constant almost surely.

Let $\{f_\omega(n)\}_{n\in\bbZ}$ be a real-valued ergodic sequence, i.e. $\omega\mapsto f_\omega(n)$ is measurable for all $n$ and 
\[
f_{T\omega}(n)=f_\omega(n+1) \text{ for all $n$.}
\]
The expectation $\bbE\{f_\omega(n)\}$ (finite or infinite) is independent of $n$, and therefore it is customary to take $n=0$ and write $\bbE\{f_\omega(0)\}$ for this expectation. 

We state the Birkhoff--Khintchine ergodic theorem in the simplest form. 
\begin{proposition}\label{prp:A0}
Let $\{f_\omega(n)\}_{n\in\bbZ}$, be a real-valued ergodic sequence such that $\bbE\{\abs{f_\omega(0)}\}<\infty$. Then almost surely, 
\[
\lim_{N\to\infty}\frac1{2N+1}\sum_{n=-N}^N f_\omega(n)=\bbE\{f_\omega(0)\}.
\]
\end{proposition}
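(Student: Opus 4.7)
The plan is to reduce the two-sided Cesàro average to the classical one-sided Birkhoff pointwise ergodic theorem, which I will take as a known result from measure-theoretic ergodic theory (e.g.\ \cite{Pa-Fi:92}). The only content beyond citing that theorem is (i) translating the covariance relation $f_{T\omega}(n)=f_\omega(n+1)$ into a pointwise orbit statement, and (ii) showing the contribution of the negative half of the sum also converges, and combining the two halves with the correct normalisation.

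First I would set $g(\omega):=f_\omega(0)$; by hypothesis $g\in L^1(\Omega,\bbP)$. Iterating the covariance relation gives
\[
f_\omega(n)=f_{T^n\omega}(0)=g(T^n\omega),\qquad n\in\bbZ,
\]
since $T$ is invertible as a measure-preserving automorphism and $f_{T^{-1}\omega}(n)=f_\omega(n-1)$ follows by applying the hypothesis to $T^{-1}\omega$. Thus the quantity of interest is simply the symmetric ergodic average of $g$ along the orbit of $\omega$.

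Next I would apply the classical Birkhoff pointwise ergodic theorem twice. Since $T$ is $\bbP$-preserving and ergodic, so is $T^{-1}$ (the invariant $\sigma$-algebras coincide). Therefore, almost surely,
\[
\lim_{N\to\infty}\frac1{N+1}\sum_{n=0}^{N}g(T^n\omega)=\bbE\{g\}
\quad\text{and}\quad
\lim_{N\to\infty}\frac1{N}\sum_{n=1}^{N}g(T^{-n}\omega)=\bbE\{g\}.
\]
Let $\Omega_*\subset\Omega$ be the intersection of the two full-measure sets on which these limits hold; then $\bbP(\Omega_*)=1$.

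Finally, for $\omega\in\Omega_*$ I would write
\[
\frac1{2N+1}\sum_{n=-N}^{N}f_\omega(n)
=\frac{N+1}{2N+1}\cdot\frac1{N+1}\sum_{n=0}^{N}g(T^n\omega)
+\frac{N}{2N+1}\cdot\frac1{N}\sum_{n=1}^{N}g(T^{-n}\omega),
\]
and let $N\to\infty$. The two prefactors tend to $\tfrac12$, and each inner average tends to $\bbE\{g\}=\bbE\{f_\omega(0)\}$, so the total limit is $\bbE\{f_\omega(0)\}$, as claimed. There is no real obstacle here beyond invoking the classical theorem; the only subtlety is ensuring that $T^{-1}$ inherits ergodicity (which it does, since a set is $T$-invariant iff it is $T^{-1}$-invariant) and keeping the bookkeeping of the two halves of the sum with the correct weights so that the prefactors combine to~$1$.
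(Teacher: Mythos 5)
Your proof is correct. Note that the paper does not prove this statement at all: Proposition~\ref{prp:A0} appears in the Appendix as recalled background (the Birkhoff--Khintchine ergodic theorem in its simplest form), quoted from the standard literature on ergodic operators. Your argument supplies exactly the routine derivation one would give: the identification $f_\omega(n)=g(T^n\omega)$ for \emph{all} $n\in\bbZ$ (including negative $n$, via applying the covariance relation at $T^{-1}\omega$) is handled correctly, the ergodicity of $T^{-1}$ is justified by the coincidence of the invariant sets, and the splitting of the symmetric average into the forward and backward one-sided Birkhoff averages with prefactors $(N+1)/(2N+1)$ and $N/(2N+1)$ tending to $\tfrac12$ is the standard and complete way to pass from the one-sided theorem to the two-sided statement. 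No gaps.
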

In other words, ``spatial average'' coincides with expectation. If $\{f_\omega(n)\}_{n\in\bbZ}$ is a sequence of i.i.d. random variables, this is simply the law of large numbers. 

Now let $\calH$ be a Hilbert space and let $A_\omega$, $\Omega\in\Omega$, be a measurable family of bounded self-adjoint operators in $\calH$. (Measurability means that for any $f,g\in\calH$, the inner product $\jap{A_\omega f,g}$ is a measurable function of $\omega\in\Omega$.) We refer to the family $A_\omega$ as a \emph{random operator} (we use the singular form, following the pattern of \emph{random variable}). We say that the random operator $A_\omega$ is \emph{ergodic}, if there exists a unitary operator $U$ in $\calH$ such that 
\begin{equation}
A_{T\omega}=UA_\omega U^*
\label{eq:A1}
\end{equation}
almost surely. This implies that for any Borel function $f$, we have 
\begin{equation}
f(A_{T\omega})=Uf(A_\omega) U^*,
\label{eq:A2}
\end{equation}
i.e. $f(A_\omega)$ is also ergodic. In particular, the spectral projection $\chi_\Delta(A_\omega)$ is an ergodic operator, where $\Delta\subset\bbR$ is any Borel set. 

\begin{proposition}\cite[Theorem~2.16]{Pa-Fi:92}\label{prp:A1}
Let $A_\omega$ be an a.s. bounded self-adjoint ergodic operator as above. Then the spectrum of $A_\omega$ is non-random. In other words, there exists a compact set $\Sigma\subset\bbR$ such that the spectrum of $A_\omega$ coincides with $\Sigma$ a.s. Moreover, the continuous, absolutely continuous, singular continuous and pure point components of the spectrum are nonrandom sets.
\end{proposition}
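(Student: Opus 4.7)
The plan is to handle the four spectral components by reducing each to a statement about the full spectrum of an appropriate restriction of $A_\omega$, and to establish non-randomness of a full spectrum via the classical trick of exhausting $\bbR$ by a countable basis of open intervals and applying ergodicity of $T$.

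\textbf{Step 1: Non-randomness of $\sigma(A_\omega)$.} For each open interval $I\subset\bbR$ set
\begin{equation*}
E_I:=\{\omega\in\Omega:\chi_I(A_\omega)=0\}=\{\omega:\sigma(A_\omega)\cap I=\emptyset\}.
\end{equation*}
From \eqref{eq:A2} we get $\chi_I(A_{T\omega})=U\chi_I(A_\omega)U^*$, so $T^{-1}(E_I)=E_I$, and ergodicity of $T$ forces $\bbP(E_I)\in\{0,1\}$. Let $\mathcal{B}$ denote the countable collection of all open intervals with rational endpoints, and set
\begin{equation*}
\Sigma:=\bbR\setminus\bigcup\{I\in\mathcal{B}:\bbP(E_I)=1\}.
\end{equation*}
On the full-measure set $\Omega_0$ obtained by intersecting, for each $I\in\mathcal{B}$, the event $E_I$ (when $\bbP(E_I)=1$) or its complement (when $\bbP(E_I)=0$), a routine argument using the basis $\mathcal{B}$ and the fact that $\sigma(A_\omega)$ is closed yields $\sigma(A_\omega)=\Sigma$ for every $\omega\in\Omega_0$. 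Boundedness (and hence compactness) of $\Sigma$ follows because $\omega\mapsto\norm{A_\omega}$ is a $T$-invariant measurable function, hence a.s.\ equal to a constant by ergodicity. Measurability of $E_I$ itself reduces to strong measurability of $\omega\mapsto\chi_I(A_\omega)$, which follows from the standing measurability of $\omega\mapsto A_\omega$ via the bounded Borel functional calculus.

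\textbf{Step 2: Reduction of the components to full spectra.} For $\#\in\{\mathrm{pp},\mathrm{c},\mathrm{ac},\mathrm{sc}\}$, let $\calH_\#(A)$ denote the corresponding spectral subspace of a bounded self-adjoint operator $A$, so that $\sigma_\#(A)=\sigma(A|_{\calH_\#(A)})$. The key covariance property is
\begin{equation*}
U\calH_\#(A_\omega)=\calH_\#(A_{T\omega}),\qquad \#\in\{\mathrm{pp},\mathrm{c},\mathrm{ac},\mathrm{sc}\}.
\end{equation*}
This follows because $A_{T\omega}=UA_\omega U^*$ implies that the spectral measure $\mu_{U\psi}^{A_{T\omega}}$ equals $\mu_\psi^{A_\omega}$ for every $\psi\in\calH$, so each defining property of the Lebesgue decomposition (atomic, absolutely continuous, singular continuous) is preserved; for $\#=\mathrm{pp}$ one uses in addition that $U$ maps eigenvectors of $A_\omega$ at energy $\lambda$ bijectively to eigenvectors of $A_{T\omega}$ at $\lambda$, and $\calH_\mathrm{c}=\calH_\mathrm{pp}^\perp$. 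Consequently $A_{T\omega}|_{\calH_\#(A_{T\omega})}$ is unitarily conjugate via $U$ to $A_\omega|_{\calH_\#(A_\omega)}$, so applying Step~1 to each of these restricted ergodic families produces a deterministic set $\Sigma_\#\subset\bbR$ with $\sigma_\#(A_\omega)=\Sigma_\#$ almost surely.

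\textbf{Main obstacle.} The only point requiring genuine care is the covariance $U\calH_\#(A_\omega)=\calH_\#(A_{T\omega})$ for $\#\in\{\mathrm{ac},\mathrm{sc}\}$: this is not immediate from \eqref{eq:A1} but is a standard consequence of the way spectral measures transform under unitary conjugation, together with the uniqueness of the Lebesgue decomposition. Once this is established, everything else reduces to the countable-intersection argument of Step~1, which is a routine application of the ergodicity of $T$ and presents no further difficulty.
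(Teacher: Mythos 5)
Your Step 1 is essentially the paper's own argument: the paper's sketch fixes an interval $\Delta$ and uses that $\Tr\chi_\Delta(A_\omega)$ is $T$-invariant (hence a.s.\ constant, giving the dichotomy ``$\Delta$ meets the spectrum a.s.\ or not''), while you use the equivalent $T$-invariant event $\{\omega:\chi_I(A_\omega)=0\}$ together with the countable rational basis and the a.s.\ constancy of $\norm{A_\omega}$; these are the same mechanism, and your write-up just makes explicit the basis argument and the measurability of $\omega\mapsto\chi_I(A_\omega)$ that the paper leaves implicit (the paper only sketches this part and cites Pastur--Figotin for the full statement).

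For Step 2, however, two points need repair, and you have mislocated the difficulty. First, you cannot literally ``apply Step 1 to the restricted ergodic families'' $A_\omega|_{\calH_\#(A_\omega)}$: these act on $\omega$-dependent subspaces, so they are not an ergodic family on a fixed Hilbert space in the sense of \eqref{eq:A1}. The fix is easy but should be stated: run the same $0$--$1$ argument directly on the events $E_I^{\#}=\{\omega:\chi_I(A_\omega)P_{\#}^{\omega}=0\}$, where $P_{\#}^{\omega}$ is the orthogonal projection onto $\calH_\#(A_\omega)$; the covariance $P_{\#}^{T\omega}=UP_{\#}^{\omega}U^*$ makes $E_I^{\#}$ $T$-invariant, and then the rational-interval exhaustion gives a deterministic $\Sigma_\#$. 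Second, and more substantively: the covariance of the spectral subspaces, which you single out as ``the only point requiring genuine care,'' is in fact immediate from invariance of the spectral measures under unitary conjugation; the genuinely delicate point, which your proposal does not address at all, is the \emph{measurability} of $\omega\mapsto P_{\#}^{\omega}$ for $\#\in\{\mathrm{ac},\mathrm{sc},\mathrm{pp},\mathrm{c}\}$. Unlike $\chi_I(A_\omega)$, these projections are not obtained from $A_\omega$ by a fixed Borel function, so measurability of the events $E_I^{\#}$ does not follow from the functional calculus; it requires a separate argument (e.g.\ characterising $P_{\mathrm{pp}}^{\omega}$, $P_{\mathrm{ac}}^{\omega}$ via time-averages of $\ee^{-\ii tA_\omega}$ or via Lebesgue decomposition of the measures $\jap{\chi_{\bigcdot}(A_\omega)f,f}$ depending measurably on $\omega$), and this is precisely the technical content of the corresponding proof in Pastur--Figotin. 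Without it, the $0$--$1$ law cannot even be invoked for the component spectra, so this is the gap to close.
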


The key observation in the proof of the above proposition is extremely simple. Take any interval $\Delta\subset\bbR$ and consider the spectral projection $\chi_\Delta(A_\omega)$. By \eqref{eq:A2} (with $f=\chi_\Delta$), the function $d(\omega):=\Tr\chi_\Delta(A_\omega)$ is $T$-invariant, hence it is constant a.s. on $\Omega$. Thus, either $\Delta$ has a non-empty intersection with the spectrum of $A_\omega$ a.s. (if $d(\omega)>0$) or $\Delta$ is in the resolvent set of $A_\omega$ a.s.  (if $d(\omega)=0$). This shows that the spectrum is deterministic.

We note that in particular, the norm $\norm{A_\omega}$ is non-random. Thus, if $A_\omega$ is bounded almost surely, then it is bounded uniformly over $\omega$. 

\subsection{Ergodic operators in $\ell^2(\bbZ)$ and the IDS measure}\label{sec:A2}
Now suppose that our Hilbert space  is $\calH=\ell^2(\bbZ)$ and the operator $U$ in \eqref{eq:A1} is the shift operator:
\[
U\delta_n=\delta_{n-1}. 
\]
For every $\omega$, we consider the matrix of $A_\omega$ in the canonical basis $\{\delta_n\}$ of $\ell^2(\bbZ)$:
\[
a_\omega(n,m):=\jap{A_\omega\delta_m,\delta_n}. 
\]
By \eqref{eq:A1}, the diagonal $\{a_\omega(n,n)\}_{n\in\bbZ}$ is an ergodic sequence, i.e. satisfies
\[
a_{T\omega}(n,n)=a_\omega(n+1,n+1), 
\]
and in particular the expectation $\bbE\{a_\omega(n,n)\}$ is independent of $n$. It is customary to take $n=0$ and write $\bbE\{a_\omega(0,0)\}$.

Using this consideration, we define the probability measure 
\begin{equation}
\nu(\Delta)=\bbE\{\jap{\chi_\Delta(A_\omega)\delta_0,\delta_0}\}
\label{eq:A3}
\end{equation}
for a Borel set $\Delta\subset\bbR$. It is called the \emph{Integrated Density of States measure} of $A_\omega$. It is easy to prove \cite[Theorem~3.1]{Pa-Fi:92}  that the support of $\nu$ coincides with the deterministic spectrum of $A_\omega$. By the ergodic theorem, the average $\bbE$ over $\omega$ in \eqref{eq:A3} can be replaced by the average over $n$: 
\begin{equation}
\nu(\Delta)=\lim_{N\to\infty}\frac1{2N+1}\sum_{n=-N}^N\jap{\chi_\Delta(A_\omega)\delta_n,\delta_n}
\label{eq:A4}
\end{equation}
for all $\omega\in\Omega_\Delta$, where $\Omega_\Delta\subset\Omega$ is a set of full measure. In general, $\Omega_\Delta$ depends on $\Delta$ and it is not clear whether one can choose a set of full measure independently on $\Delta$ (as the set of all possible $\Delta$'s is uncountable). However, it is often sufficient to deal with an integrated version of \eqref{eq:A4}, viz. 
\begin{equation}
\int_{\bbR}f(\lambda)\dd\nu(\lambda)=\lim_{N\to\infty}\frac1{2N+1}\sum_{n=-N}^N\jap{f(A_\omega)\delta_n,\delta_n}
\label{eq:A5}
\end{equation}
for continuous $f$, which holds on a set of full measure independent on $f$ (the key consideration here is the separability of the space of continuous functions). 

\subsection{A Szeg\H{o} type theorem for the IDS measure}\label{sec:A3}
Let $1_N$ be the orthogonal projection in $\ell^2(\bbZ)$ onto the span of the $2N+1$ vectors $\{\delta_n\}_{n=-N}^N$ of the standard basis. The right-hand side of \eqref{eq:A5} can be written as 
\[
\lim_{N\to\infty}\frac1{2N+1}\Tr (1_Nf(A_\omega)1_N).
\]
It is often convenient to replace $\Tr (1_Nf(A_\omega)1_N)$ here by $\Tr f(A_\omega^{(N)})$, where $A_\omega^{(N)}$ is the compression of $A_\omega$ onto the $2N+1$-dimensional subspace:
\[
A_\omega^{(N)}=\{a_\omega(n,m)\}_{n,m=-N}^N. 
\]
The following theorem can be regarded as a stochastic analogue of the First Szeg\H{o} limit theorem (the latter theorem applies to convolution operators on $\ell^2(\bbZ)$). 

\begin{proposition}\cite[Theorem~4.11]{Pa-Fi:92}\label{prp:A2}
Let $A_\omega$ be an a.s. bounded self-adjoint ergodic operator on $\ell^2(\bbZ)$. Then for all continuous functions $f$ we have a.s. 
\[
\int_{\bbR}f(\lambda)\dd\nu(\lambda)=
\lim_{N\to\infty}
\frac1{2N+1}\Tr f(A_\omega^{(N)}).
\]
Moreover, if $\nu(\{\lambda\})=0$, then 
\[
\nu((\lambda,\infty))=
\lim_{N\to\infty}
\frac1{2N+1}\calN(\lambda,A_\omega^{(N)}).
\]
\end{proposition}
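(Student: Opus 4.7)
The plan is to reduce the first claim to the ergodic-theoretic identity \eqref{eq:A5} already at our disposal, and to handle the remaining discrepancy by a Sz\H{e}go-type boundary estimate. Applying the Birkhoff theorem to the stationary sequence $n \mapsto \jap{f(A_\omega)\delta_n,\delta_n}$ yields, for every continuous $f$ and almost every $\omega$,
\[
\int_\bbR f(\lambda)\dd\nu(\lambda) = \lim_{N\to\infty}\frac{1}{2N+1}\Tr(1_N f(A_\omega) 1_N),
\]
and separability of $C([-M,M])$ for $M\geq\norm{A_\omega}$ (a.s.\ finite and nonrandom) lets us fix a single exceptional set covering all continuous $f$. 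Hence the first claim reduces to showing
\[
\lim_{N\to\infty}\frac{1}{2N+1}\bigl(\Tr f(A_\omega^{(N)}) - \Tr(1_N f(A_\omega) 1_N)\bigr) = 0\quad\text{a.s.},
\]
and by Weierstrass approximation on $[-M,M]$ it suffices to treat $f(x)=x^p$ for each integer $p\geq 0$.

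Since $\Tr(A_\omega^{(N)})^p = \Tr(1_N A_\omega 1_N)^p$, we compare the latter with $\Tr(1_N A_\omega^p 1_N)$. Inserting $I = 1_N + Q_N$ (with $Q_N=I-1_N$) between consecutive factors of $A_\omega$ inside $1_N A_\omega^p 1_N$ and expanding, one obtains the algebraic identity
\[
1_N A_\omega^p 1_N - (1_N A_\omega 1_N)^p = \sum_{\epsilon\in\{1_N,Q_N\}^{p-1}\setminus\{(1_N,\ldots,1_N)\}} 1_N A_\omega \epsilon_1 A_\omega \epsilon_2 \cdots \epsilon_{p-1} A_\omega 1_N,
\]
a fixed number of terms (depending only on $p$), each of which contains at least one $Q_N$ between two copies of $A_\omega$. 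By cyclicity of the trace and Cauchy--Schwarz in the Hilbert--Schmidt norm, the trace of each such term is bounded by $\norm{A_\omega}^{p-2}\cdot S_N$, where
\[
S_N := \Tr(1_N A_\omega Q_N A_\omega 1_N) = \sum_{\abs{n}\leq N}\sum_{\abs{l}>N}\bigl\lvert\jap{A_\omega\delta_n,\delta_l}\bigr\rvert^2.
\]

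The main obstacle is to show $S_N = o(N)$ almost surely, for this is the only step that genuinely uses structure beyond mere boundedness of $A_\omega$. For the operators of interest in the present paper, where $\jap{A_\omega\delta_n,\delta_l} = \sqrt{\varkappa_n\varkappa_l}\,\widehat{\varphi}_{n-l}$ with $\widehat\varphi\in\ell^2$ and $\abs{\widehat\varphi_k}\to 0$, one splits the window $\{\abs{n}\leq N\}$ into a boundary layer $\{N-K\leq\abs{n}\leq N\}$ and a bulk, chooses $K=K(N)$ growing slowly with $N$, bounds the bulk contribution by $\varkappa_{\max}^2(2N+1)\sum_{\abs{k}\geq K}\abs{\widehat\varphi_k}^2 = o(N)$, and bounds the boundary-layer contribution by $\varkappa_{\max}^2\sum_k\abs{\widehat\varphi_k}^2\cdot O(K) = o(N)$. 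In the abstract ergodic setting of the appendix, the same conclusion follows under any hypothesis that ensures summability or mixing of off-diagonal matrix elements, as holds in every application considered in the paper.

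The second statement follows from the first by a standard weak-convergence argument. The normalised eigenvalue-counting measures $\widetilde{\nu}^{(N)}_\omega = (2N+1)^{-1}\sum_k\delta(\bigcdot - \lambda_k(A_\omega^{(N)}))$ are probability measures supported in the common compact interval $[-M,M]$, and the first part together with Stone--Weierstrass gives $\widetilde{\nu}^{(N)}_\omega \to \nu$ weakly almost surely. Weak convergence of probability measures on a compact interval implies pointwise convergence of the tails $\lambda\mapsto\widetilde{\nu}^{(N)}_\omega((\lambda,\infty))$ at every continuity point of $\nu$, and the hypothesis $\nu(\{\lambda\})=0$ is precisely this continuity condition. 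Consequently
\[
\frac{1}{2N+1}\calN(\lambda;A_\omega^{(N)}) = \widetilde{\nu}^{(N)}_\omega((\lambda,\infty)) \longrightarrow \nu((\lambda,\infty)),
\]
which is the second claim of the proposition.
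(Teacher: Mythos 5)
The paper does not prove this proposition at all: it is imported verbatim from \cite[Theorem~4.11]{Pa-Fi:92}, so there is no in-paper argument to compare with, and your proposal supplies an actual Szeg\H{o}-type proof. Its architecture is sound: Birkhoff applied to $n\mapsto\jap{f(A_\omega)\delta_n,\delta_n}$ gives \eqref{eq:A5}; Weierstrass reduces the defect $\frac1{2N+1}\bigl(\Tr f(A_\omega^{(N)})-\Tr(1_Nf(A_\omega)1_N)\bigr)$ to monomials; the expansion in $1_N+Q_N$ (with $Q_N=I-1_N$) and Cauchy--Schwarz control the defect by $S_N$; and the portmanteau theorem handles the counting-function statement (all measures live in a common compact interval, so no tightness issue). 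One small point of care: the bound $\abs{\Tr(\cdot)}\le\norm{A_\omega}^{p-2}S_N$ is correct, but to get it you must split each term at the \emph{first} and \emph{last} occurrence of $Q_N$ (writing $Q_N=Q_N\cdot Q_N$ when they coincide), so that both Hilbert--Schmidt factors are of the form $1_NA_\omega Q_N$ or $Q_NA_\omega 1_N$; splitting at an arbitrary occurrence only yields $\norm{A_\omega}^{p-1}\sqrt{(2N+1)S_N}$, which would still suffice but is not what you asserted.

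The genuine gap is the step $S_N=o(N)$ a.s.: you prove it only for matrix elements of the special form $\sqrt{\varkappa_n\varkappa_l}\,\widehat{\varphi}_{n-l}$ and then claim that in the abstract setting one needs ``summability or mixing of off-diagonal matrix elements''. The proposition, however, is stated for an arbitrary a.s.\ bounded self-adjoint ergodic operator, and in fact no extra hypothesis is needed, so the restriction is both a gap relative to the statement and unnecessary. Indeed, each row satisfies $\sum_l\abs{a_\omega(n,l)}^2=\norm{A_\omega\delta_n}^2\le M^2$ with $M=\norm{A_\omega}$ nonrandom, so for fixed $K$ the tail sums $h^{(K)}_\omega(n):=\sum_{\abs{k}>K}\abs{a_\omega(n,n+k)}^2$ form a bounded ergodic sequence in $n$. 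Split the window $\{\abs{n}\le N\}$ into the boundary layer $N-K\le\abs{n}\le N$, whose contribution to $S_N$ is at most $(2K+2)M^2$, and the bulk, where $\abs{l}>N$ forces $\abs{l-n}>K$ and hence the inner sum is at most $h^{(K)}_\omega(n)$; Birkhoff then gives $\limsup_N S_N/(2N+1)\le\bbE\{h^{(K)}_\omega(0)\}$ a.s., and dominated convergence sends the right-hand side to $0$ as $K\to\infty$ (intersect the countably many full-measure sets over $K\in\bbN$). In other words, your own boundary-layer idea, run through the ergodic theorem instead of the Laurent structure, closes the gap in the stated generality; as written, your proof covers only the operators appearing in this paper.
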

This proposition has an important corollary which allows one to apply variational estimates to the IDS measure. 
\begin{proposition}\cite[Theorem~4.14]{Pa-Fi:92}
\label{prp:A2a}
Let $A_\omega$ and $B_\omega$ be two uniformly bounded ergodic operators on $\ell^2(\bbZ)$, and let $\nu_A$ and $\nu_B$ be the corresponding IDS measures. Assume that $A_\omega\leq B_\omega$  (in the quadratic form sense) almost surely. Then for all $\lambda\in\bbR$, 
\[
\nu_A((\lambda,\infty))\leq \nu_B((\lambda,\infty)). 
\]
\end{proposition}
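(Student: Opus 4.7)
The plan is to transfer the operator inequality $A_\omega\leq B_\omega$ to a pointwise inequality between eigenvalue counting functions of the finite truncations $A_\omega^{(N)}$ and $B_\omega^{(N)}$, and then pass to the limit using Proposition~\ref{prp:A2}.

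First I would observe that for every $\omega$ in the common full-measure set where $A_\omega\leq B_\omega$ holds, the compressions onto the $(2N+1)$-dimensional subspace $\Ran 1_N$ inherit the inequality: for any $x\in\Ran 1_N$,
\[
\jap{A_\omega^{(N)}x,x}=\jap{A_\omega x,x}\leq \jap{B_\omega x,x}=\jap{B_\omega^{(N)}x,x}.
\]
By the Courant--Fischer min-max principle, this implies $\lambda_k(A_\omega^{(N)})\leq\lambda_k(B_\omega^{(N)})$ for all $k$, and hence the trivial but crucial pointwise inequality
\[
\calN(\lambda;A_\omega^{(N)})\leq \calN(\lambda;B_\omega^{(N)}), \quad \forall \lambda\in\bbR.
\]

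Next, I would divide by $2N+1$ and pass to the limit $N\to\infty$. By Proposition~\ref{prp:A2}, at any point $\lambda$ that is simultaneously a continuity point of both $\nu_A$ and $\nu_B$ (i.e., not an atom of either measure), we have a.s.
\[
\nu_A((\lambda,\infty))=\lim_{N\to\infty}\frac{\calN(\lambda;A_\omega^{(N)})}{2N+1}\leq \lim_{N\to\infty}\frac{\calN(\lambda;B_\omega^{(N)})}{2N+1}=\nu_B((\lambda,\infty)).
\]

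The remaining point is to extend the inequality to every $\lambda\in\bbR$. Since $\nu_A$ and $\nu_B$ are finite Borel measures, each has at most countably many atoms, so the set $\calC$ of common continuity points is dense in $\bbR$. For an arbitrary $\lambda\in\bbR$, pick a sequence $\lambda_j\in\calC$ with $\lambda_j\downarrow\lambda$; then the survival function $s\mapsto\nu((s,\infty))$ is right-continuous, and letting $j\to\infty$ in the inequality $\nu_A((\lambda_j,\infty))\leq\nu_B((\lambda_j,\infty))$ gives $\nu_A((\lambda,\infty))\leq\nu_B((\lambda,\infty))$, as required.

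The only mildly delicate point is this last approximation argument: one must not confuse the at-most-countable exceptional set of atoms (which forces the direct appeal to Proposition~\ref{prp:A2} to fail at individual values of $\lambda$) with the $\omega$-null set on which the almost-sure statements hold. Both are handled routinely once one notes that the density of $\calC$ and the right-continuity of $s\mapsto\nu((s,\infty))$ suffice, so no uniform-in-$\lambda$ convergence is required.
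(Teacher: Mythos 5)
Your proof is correct. The paper does not prove Proposition~\ref{prp:A2a} at all --- it is quoted as background from \cite[Theorem~4.14]{Pa-Fi:92} --- and your argument is precisely the standard route: the compressions $A_\omega^{(N)}$, $B_\omega^{(N)}$ inherit the quadratic-form inequality, Courant--Fischer gives $\calN(\lambda;A_\omega^{(N)})\leq\calN(\lambda;B_\omega^{(N)})$ pointwise, Proposition~\ref{prp:A2} yields the inequality of IDS measures at every $\lambda$ that is an atom of neither $\nu_A$ nor $\nu_B$ (and since both sides are deterministic, a single $\omega$ in the relevant full-measure set suffices for each fixed $\lambda$), and the right-continuity of $\lambda\mapsto\nu((\lambda,\infty))$ together with the at-most-countable set of atoms extends the inequality to all $\lambda\in\bbR$. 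All steps check out, including the final approximation along common continuity points $\lambda_j\downarrow\lambda$.
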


\subsection{Point masses of $\nu$}
Let, as above, $A_\omega$ be an ergodic operator in $\ell^2(\bbZ)$ with the IDS measure $\nu$. 
\begin{proposition}\cite[Theorems~2.11-2.12]{Pa-Fi:92}\label{prp:A3}
Let $\lambda\in\bbR$. 
\begin{enumerate}[\rm (i)]
\item
If $\nu(\{\lambda\})>0$, then $\lambda$ is an eigenvalue of $A_\omega$ of infinite multiplicity almost surely. 
\item
If $\nu(\{\lambda\})=0$, then $\lambda$ is not an eigenvalue of $A_\omega$ almost surely.
\end{enumerate}
In particular, if $\lambda$ is an eigenvalue of $A_\omega$ almost surely, then it has infinite multiplicity almost surely.
\end{proposition}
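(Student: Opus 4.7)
The plan is to exploit the shift-covariance of the spectral projection $P(\omega) := \chi_{\{\lambda\}}(A_\omega)$, which by \eqref{eq:A2} satisfies $P(T^n\omega) = U^n P(\omega) U^{-n}$ for every $n\in\bbZ$, where $U$ is the shift on $\ell^2(\bbZ)$. A direct computation using $U^*\delta_n = \delta_{n+1}$ yields $f_{T\omega}(n) = f_\omega(n+1)$ for the diagonal sequence $f_\omega(n) := \jap{P(\omega)\delta_n,\delta_n}$, so $f_\omega$ is ergodic in the sense of Section~\ref{sec:A1}, with $\bbE\{f_\omega(0)\} = \nu(\{\lambda\})$ by the definition \eqref{eq:A3} of the IDS measure.

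\emph{Part (ii)} is the easy direction. If $\nu(\{\lambda\}) = 0$ then $f_\omega(0) = 0$ a.s., and the ergodic shift relation propagates this to every $f_\omega(n)$. Since $\bbZ$ is countable, one obtains a single set of full measure on which $\jap{P(\omega)\delta_n,\delta_n} = 0$ for all $n$; as $P(\omega)$ is a self-adjoint projection, this forces $P(\omega)\delta_n = 0$ for every $n$, hence $P(\omega) = 0$, so $\lambda$ is not an eigenvalue of $A_\omega$.

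\emph{Part (i)} is the substantive claim. Introduce the multiplicity function $d(\omega) := \dim\Ran P(\omega) = \Tr P(\omega) \in \{0,1,2,\ldots,\infty\}$. Writing $\Tr P(\omega)$ as the monotone limit of the measurable finite-rank traces $\Tr(1_N P(\omega) 1_N)$ shows that $d$ is measurable, and the shift-covariance of $P$ makes it $T$-invariant, so ergodicity of $T$ gives $d(\omega) \equiv d_*$ a.s. for some $d_* \in \{0,1,2,\ldots,\infty\}$. Part (ii) together with the hypothesis $\nu(\{\lambda\}) > 0$ rules out $d_* = 0$. The heart of the argument is to rule out $1 \leq d_* < \infty$ by contradiction: if $d_*$ were finite, then $\sum_{n=-N}^{N} f_\omega(n) \leq \Tr P(\omega) = d_*$ a.s., so
\[
\lim_{N\to\infty}\frac{1}{2N+1}\sum_{n=-N}^{N} f_\omega(n) = 0 \quad\text{a.s.}
\]
On the other hand, the Birkhoff--Khintchine theorem (Proposition~\ref{prp:A0}) applied to the ergodic sequence $f_\omega$ identifies the same limit with $\bbE\{f_\omega(0)\} = \nu(\{\lambda\}) > 0$, a contradiction. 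Hence $d_* = \infty$.

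The only routine technicality is to ensure that all the countably many pointwise a.s. statements hold on a single common set of full measure, which is immediate by intersection. The main conceptual observation powering part (i) is a simple trace trick: any finite-rank projection on $\ell^2(\bbZ)$ has vanishing averaged diagonal entries, so the ergodic mean $\nu(\{\lambda\})$ can be positive only when the multiplicity is infinite --- precisely the dichotomy expressed by the two parts of the proposition.
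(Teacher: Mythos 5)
Your proof is correct. The paper does not prove this proposition at all --- it is quoted from \cite{Pa-Fi:92} (Theorems 2.11--2.12) --- and your argument (covariance of $\chi_{\{\lambda\}}(A_\omega)$, measurability and $T$-invariance of the multiplicity $\Tr\chi_{\{\lambda\}}(A_\omega)$, hence a.s. constancy, and an averaging argument forcing the dichotomy $0$ or $\infty$) is essentially the standard proof from that reference; note only that Birkhoff's theorem can be bypassed, since monotone convergence already gives $\bbE\{\Tr\chi_{\{\lambda\}}(A_\omega)\}=\sum_{n\in\bbZ}\bbE\{\jap{\chi_{\{\lambda\}}(A_\omega)\delta_n,\delta_n}\}=\sum_{n\in\bbZ}\nu(\{\lambda\})$, which is infinite whenever $\nu(\{\lambda\})>0$ and hence rules out a finite nonzero constant multiplicity directly.
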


\end{document}